\documentclass[11pt]{amsart}
\usepackage[letterpaper,margin=1in]{geometry} 
\usepackage[colorlinks,
             linkcolor=blue,
             citecolor=black!75!red,
             pdfproducer={pdfLaTeX},
             pdfpagemode=None,
             bookmarksopen=true
             bookmarksnumbered=true]{hyperref}
\usepackage[matrix,arrow,ps,color,line,curve,frame,all]{xy}

\usepackage{tikz}
\usetikzlibrary{arrows}

%%% Title

%\title{More exotic elliptic algebras?}
%\title{Inelliptic Sklyanin algebras}
\title{Some algebras having relations like those for the 4-dimensional Sklyanin algebras}

\author[Alex Chirvasitu]{Alex Chirvasitu}
\author[S. Paul Smith]{S. Paul Smith}

\address{Department of Mathematics, Box 354350, University of  Washington, Seattle, WA 98195,USA.}

\email{chirva@math.washington.edu, smith@math.washington.edu}

\keywords{Sklyanin algebras, graded algebras, 4 generators and 6 relations}

\subjclass[2010]{16E65, 16S38, 16T05, 16W50}

%%% Theorem environments

\usepackage{amsmath,amsthm,stmaryrd}
\usepackage{cleveref}
%\swapnumbers

\newtheorem{lemma}{Lemma}[section]
\newtheorem{theorem}[lemma]{Theorem}
\newtheorem{proposition}[lemma]{Proposition}
\newtheorem{corollary}[lemma]{Corollary}

\theoremstyle{definition} 

\newtheorem{definitionnodiamond}[lemma]{Definition}
\newtheorem{examplenodiamond}[lemma]{Example}
\newtheorem{remarknodiamond}[lemma]{Remark}

\numberwithin{equation}{section}

\setcounter{tocdepth}{1}

\newcounter{stepofproof}

%%% Ref styles

\crefname{section}{Section}{Sections}
\crefformat{section}{#2Section~#1#3} 
\Crefformat{section}{#2Section~#1#3} 

\crefname{subsection}{}{Subsections}
\crefformat{subsection}{\S#2#1#3} 
\Crefformat{subsection}{\S#2#1#3}

\crefname{definition}{Definition}{Definitions}
\crefformat{definition}{#2Definition~#1#3} 
\Crefformat{definition}{#2Definition~#1#3} 

\crefname{example}{Example}{Examples}
\crefformat{example}{#2Example~#1#3} 
\Crefformat{example}{#2Example~#1#3} 

\crefname{table}{Table}{Tables}
\crefformat{table}{#2Table~#1#3} 
\Crefformat{table}{#2Table~#1#3}

\crefname{examplenodiamond}{Example}{Examples}
\crefformat{examplenodiamond}{#2Example~#1#3} 
\Crefformat{examplenodiamond}{#2Example~#1#3} 

\crefname{remark}{Remark}{Remarks}
\crefformat{remark}{#2Remark~#1#3} 
\Crefformat{remark}{#2Remark~#1#3} 

\crefname{remarknodiamond}{Remark}{Remarks}
\crefformat{remarknodiamond}{#2Remark~#1#3} 
\Crefformat{remarknodiamond}{#2Remark~#1#3} 

\crefname{convention}{Convention}{Conventions}
\crefformat{convention}{#2Convention~#1#3} 
\Crefformat{convention}{#2Convention~#1#3} 

\crefname{lemma}{Lemma}{Lemmas}
\crefformat{lemma}{#2Lemma~#1#3} 
\Crefformat{lemma}{#2Lemma~#1#3} 

\crefname{proposition}{Proposition}{Propositions}
\crefformat{proposition}{#2Proposition~#1#3} 
\Crefformat{proposition}{#2Proposition~#1#3} 

\crefname{corollary}{Corollary}{Corollaries}
\crefformat{corollary}{#2Corollary~#1#3} 
\Crefformat{corollary}{#2Corollary~#1#3} 

\crefname{theorem}{Theorem}{Theorems}
\crefformat{theorem}{#2Theorem~#1#3} 
\Crefformat{theorem}{#2Theorem~#1#3} 

\crefname{assumption}{Assumption}{Assumptions}
\crefformat{assumption}{#2Assumption~#1#3} 
\Crefformat{assumption}{#2Assumption~#1#3} 

\crefname{equation}{}{}
\crefformat{equation}{(#2#1#3)} 
\Crefformat{equation}{(#2#1#3)}

\crefname{align}{}{}
\crefformat{align}{(#2#1#3)} 
\Crefformat{align}{(#2#1#3)}

\crefname{proofstep}{Step}{Steps}
\crefformat{proofstep}{#2Step~#1#3} 
\Crefformat{proofstep}{#2Step~#1#3}

%%% Short macros

%%% Links and bibliography

\newcommand\arXiv[1]{\href{http://arxiv.org/abs/#1}{\nolinkurl{arXiv:#1}}}
\newcommand\MRnumber[1]{\href{http://www.ams.org/mathscinet-getitem?mr=#1}{\nolinkurl{MR#1}}}
\newcommand\DOI[1]{\href{http://dx.doi.org/#1}{\nolinkurl{DOI:#1}}}
\newcommand\MAILTO[1]{\href{mailto:#1}{\nolinkurl{#1}}}

%%% Letters

\usepackage{amsfonts,amssymb}

\newcommand\bG{\mathbb G}

\newcommand\bP{\mathbb P}

\newcommand\bS{\mathbb S}
\newcommand\bT{\mathbb T}

\newcommand\bX{\mathbb X}

\newcommand\cO{\mathcal O}

%%% Words

%\DeclareMathOperator\Hom{\cat{Hom}}

\newcommand\id{\mathrm{id}}

\renewcommand\lim{\varprojlim}

%%%%%%%%%%%%%%%%% Paul's definitions %%%%%%%%%%%%%%%%%

%
%  Blackboard Bold
%

\def\CC{{\mathbb C}}

\def\NN{{\mathbb N}}

\def\PP{{\mathbb P}}

\def\ZZ{{\mathbb Z}}

\def\bfx{{\bf x}}

\def\GL{\operatorname {GL}}

\def\pr{{\operatorname {pr}}}

\def\Aut{\operatorname{Aut}}

\def\gr{{\sf gr}}

\def\Proj{\operatorname{Proj}}

\def\QGr{\operatorname{\sf QGr}}
\def\rank{\operatorname{rank}}

\def\a{\alpha}
\def\b{\beta}
\def\c{\gamma}
\def\d{\delta}

\def\l{\lambda}

\def\s{\sigma}

\def\ve{\varepsilon}

\def\G{\Gamma}

\def\fp{{\mathfrak p}}

\def\fP{{\mathfrak P}}
\def\fsl{{\mathfrak s}{\mathfrak l}}

\def\fgl{{\mathfrak g}{\mathfrak l}}

\def\sT{{\sf T}}

%\def\ldot{{\:\raisebox{2pt}{\text{\circle*{1.5}}}}}
%
% 
%
%

%%%%%%%%%%%%%%%%%%%%%%%%%%%%%%%%%%%%%%%%%%%%%%%%%%%%%%%%%%%%
%%%%%%%%%%%%%%%%%%%%%%%%%%%%%%%%%%%%%%%%%%%%%%%%%%%%%%%%%%%%
%%%%%%%%%                                                                                                          %%%%%%%%%%%%%%%%%%%%%%
%%%%%%%%%                         END OF PAUL'S DEFINITIONS                         %%%%%%%%%%%%%%%%%%%%%%
%%%%%%%%%                                                                                                          %%%%%%%%%%%%%%%%%%%%%%
%%%%%%%%%%%%%%%%%%%%%%%%%%%%%%%%%%%%%%%%%%%%%%%%%%%%%%%%%%%%
%%%%%%%%%%%%%%%%%%%%%%%%%%%%%%%%%%%%%%%%%%%%%%%%%%%%%%%%%%%%

%----------------Numbering equations---------------------

\numberwithin{equation}{section}

\makeatletter
\let\xx@thm\@thm
\AtBeginDocument{\let\@thm\xx@thm}
\makeatother

% % % % % % % % % % % % % % % % % % % % % % % % % % % % % % % % % % % % % % % % 
% % % % % % % % % %   B E G I N   D O C U M E N T   % % % % % % % % % % % % % %
% % % % % % % % % % % % % % % % % % % % % % % % % % % % % % % % % % % % % % % %

\begin{document}

%\address{Department of Mathematics, Box 354350, Univ. Washington, Seattle, WA 98195, USA}
%\email{smith@math.washington.edu}

%\subjclass{16-XX, 14-XX} 

\begin{abstract}
  The 4-dimensional Sklyanin algebras are a well-studied 2-parameter
  family of non-commutative graded algebras, often denoted
  $A(E,\tau)$, that depend on a quartic elliptic curve
  $E \subseteq \PP^3$ and a translation automorphism $\tau$ of
  $E$. They are graded algebras generated by four degree-one elements
  subject to six quadratic relations and in many important ways they
  behave like the polynomial ring on four indeterminates apart from
  the minor difference that they are not commutative.  They can be
  seen as ``elliptic analogues'' of the enveloping algebra of
  $\fgl(2,\CC)$ and the quantized enveloping algebras $U_q(\fgl_2)$.

  Recently, Cho, Hong, and Lau, conjectured that a certain 2-parameter
  family of algebras arising in their work on homological mirror
  symmetry consists of 4-dimensional Sklyanin algebras. This paper
  shows their conjecture is false in the generality they make it. On
  the positive side, we show their algebras exhibit features that are
  similar to, and differ from, analogous features of the 4-dimensional
  Sklyanin algebras in interesting ways.  We show that most of the
  Cho-Hong-Lau algebras determine, and are determined by the graph of
  a bijection between two 20-point subsets of the projective space
  $\PP^3$.

  The paper also examines a class of 4-generator 6-relator algebras
  admitting presentations analogous to those of the 4-dimensional
  Sklyanin algebras. This class includes the 4-dimensional Sklyanin
  algebras and most of the Cho-Hong-Lau algebras.
\end{abstract}

\maketitle

\tableofcontents
\pagenumbering{arabic}

\section{Introduction}

\subsection{}
This paper examines three families of graded algebras with four
generators and six quadratic relations.  The only commutative algebra
in these families is the polynomial ring on 4 variables.  All algebras
in these families are, like the polynomial ring on 4 variables,
generated by 4 elements subject to 6 homogeneous quadratic relations.

The members of the first of these families are denoted by
$A(\a,\b,\c)$, depending on a parameter $(\a,\b,\c) \in \Bbbk^3$ where
$\Bbbk$ is a field that will be fixed throughout the paper. They are
generated by $x_0,x_1,x_2,x_3$ subject to the relations
 \begin{equation}
 \label{Aabc}
 \begin{cases}
  x_0x_1-x_1x_0  & \; = \; \a(x_2x_3+x_3x_2) \qquad \qquad  x_0x_1+x_1x_0   \; = \; x_2x_3-x_3x_2  \\
   x_0x_2-x_2x_0  & \; = \; \b(x_3x_1+x_1x_3) \qquad \qquad  x_0x_2+x_2x_0   \; = \; x_3x_1-x_1x_3  \\
    x_0x_3-x_3x_0  & \; = \; \c(x_1x_2+x_2x_1) \qquad \qquad  x_0x_3+x_3x_0   \; = \; x_1x_2-x_2x_1.
\end{cases}
\end{equation}
Among these algebras, those for which
 \begin{equation}
 \label{constraints}
\a+\b+\c+\a\b\c=0 \qquad \text{ and} \qquad \{\a,\b,\c\} \cap \{0,\pm 1\}=\varnothing,
\end{equation}
are so starkly different from the rest that we consider them as a
separate family.  These constitute the second of our three families
and are called {\it non-degenerate 4-dimensional Sklyanin algebras.}
Algebras in the third family are denoted by $R(a,b,c,d)$, depending on
a parameter $(a,b,c,d)$ that is required to lie on the quadric
$\{ad+bc=0\}$ in the projective space $\PP^3$. They are defined in
\Cref{sect.CHL.algs1}.

The algebras $R(a,b,c,d)$ were discovered by Cho, Hong, and Lau in
their work on mirror symmetry \cite{CHL}, and the motivation for this
paper is their conjecture that these are 4-dimensional Sklyanin
algebras. We prove the conjecture false in the generality in which it
is made, but on the positive side
\begin{enumerate}
\item{} for a Zariski-dense open subset of points on the quadric
  $\{ad+bc=0\}$, $R(a,b,c,d)$ is isomorphic to $A(\a,\b,\c)$ for some
  $(\a,\b,\c)$, but $(\a,\b,\c)$ does not always satisfy the condition
  $\a+\b+\c+\a\b\c=0$;
\item{} there are two lines $\ell_1, \ell_2 \subseteq \{ad+bc=0\}$
  such that $R(a,b,c,d)$ is isomorphic to $A(\a,1,-1)$ for all
  $(a,b,c,d) \in \ell_1 \cup \ell_2 - \{\text{12 points}\}$;
\item the automorphism group of almost all $R(a,b,c,d)$ has a subgroup
  isomorphic to the Heisenberg group of order $4^3$.
\end{enumerate}

The non-degenerate Sklyanin algebras may be parametrized by pairs
$(E,\tau)$ consisting of an elliptic curve $E$ and a translation
automorphism $\tau:E \to E$. We write $A(E,\tau)$ for the Sklyanin
algebra corresponding to this data.  It is striking that the
translation automorphism for those $R(a,b,c,d)$ that are
non-degenerate Sklyanin algebras has order 4; i.e., if
$(a,b,c,d) \in \ell_1 \cup \ell_2 - \{\text{12 points}\}$, then
$R(a,b,c,d)\cong A(E,\tau)$ for some elliptic curve $E$ and some
$\tau$ having order 4 (\Cref{prop.almost.Skly,prop.special.case}).

\subsection{}
\label{sect1.20.pts}
A striking feature of the algebras $R(a,b,c,d)$ is that almost all of
them determine, and are determined by, a set of 20 points in the
product $\PP^3\times \PP^3$ of two copies of the three-dimensional
projective space.

\subsection{}
Because Sklyanin algebras, appearing first in \cite{Skl82,Skl83}, have
played such a large role in the development of non-commutative algebra
and algebraic geometry over the past thirty years (see
\cite{SS92,LS93,SSJ93,S94,VdB96} for example), it is sensible to
examine the larger class of algebras $A(\a,\b,\c)$ defined by the
``same'' relations minus the constraint $\a+\b+\c+\a\b\c=0$.

We do not undertake an exhaustive study of the algebras $A(\a,\b,\c)$
when $\a+\b+\c+\a\b\c \ne 0$ but it appears to us that there are
interesting questions about them that might be fruitfully pursued. We
mention some of these questions in \Cref{sect.qus}.

\subsection{}
%Throughout, $\Bbbk$ is an algebraically closed field whose characteristic is not 2.

We use the notation $[x,y]=xy-yx$ and $\{x,y\}=xy+yx$.

\subsection{The algebras $A(\a,\b,\c)$}

 Let  $\Bbbk$ be an arbitrary field and $\a_1,\a_2,\a_3 \in \Bbbk$.
 Define $A(\a_1,\a_2,\a_3)$, or simply $A$, to be the free algebra $\Bbbk\langle x_0,x_1,x_2,x_3\rangle$ modulo the six relations
\begin{equation}
\label{skly.relns}
[x_0,x_i]=\a_i\{x_j,x_k\}, \quad \{x_0,x_i\}=[x_j,x_k], \qquad \hbox{$(i,j,k)$ a cyclic permutation of $(1,2,3)$.}
\end{equation}
We always consider $A$ as an $\NN$-graded $\Bbbk$-algebra with
${\rm deg}\{x_0,x_1,x_2,x_3\}=1$.  Thus, $A$ is the quotient of the
free algebra $TV/(R)=\Bbbk\langle x_0,x_1,x_2,x_3\rangle/(R)$ where
$V={\rm span}\{x_0,x_1,x_2,x_3\}$ and $R \subseteq V^{\otimes 2}$ is
the linear span of the six elements in $V^{\otimes 2}$ corresponding
to the relations \Cref{skly.relns}.

\subsection{Degenerate and non-degenerate 4-dimensional Sklyanin algebras}
Suppose $\a+\b+\c+\a\b\c =0$. We call  $A(\a,\b,\c)$ a {\sf 4-dimensional Sklyanin algebra} in this case.
If, in addition, $\{\a,\b,\c\} \cap \{0,\pm 1\}=\varnothing$ we call $A(\a,\b,\c)$
a {\sf non-degenerate 4-dimensional Sklyanin algebra}.
If $\a+\b+\c+\a\b\c =0$ and $\{\a,\b,\c\} \cap \{0,\pm 1\} \ne \varnothing$ we call $A(\a,\b,\c)$ a {\sf degenerate 4-dimensional Sklyanin algebra}. 

By \cite{SS92}, non-degenerate 4-dimensional Sklyanin algebras are
noetherian domains having the same Hilbert series as the polynomial
ring in 4 variables. By \cite{SS92} and \cite{LS93}, they have excellent homological
properties. The representation theory is intimately related to the geometry of $(E \subseteq \PP^3,\tau)$.

Some degenerate 4-dimensional Sklyanin algebras are closely related to
better known algebras.  For instance, the algebra $A=A(0,0,0)$ has a
degree-one central element, $z$, such that
$A/(z-1) \cong A[z^{-1}]_0 \cong U({\mathfrak{so}}(3,\Bbbk))$, the
enveloping algebra of the Lie algebra ${\mathfrak{so}}(3,\Bbbk)$.
Similarly, if $\Bbbk=\CC$ and $\b \ne 0$, then $A=A(0,\b,-\b)$ has a
degree-two central element $\Omega$ such that
$A[\Omega^{-1}]_0 \cong U_q(\fsl(2,\CC))$, a quantized enveloping
algebra of $\fsl(2,\CC)$.

\subsection{The algebras of  Cho, Hong, and Lau}
\label{sect.CHL.algs1}
\label{ssect.CHL.alg.defn}
% In this section we work over $\CC$.  

%Let $a,b,c,d$ be non-zero complex numbers such that $ac+bd=0$ and $a^2-b^2+c^2-d^2$ is a scalar multiple of $ac$.

Let  $(a,b,c,d) \in \Bbbk^4$. We write $R(a,b,c,d)$, or simply $R$, for the free algebra $\Bbbk\langle x_1,x_2,x_3,x_4\rangle$ modulo the relations
\begin{align*}
\hbox{(R1)} \qquad \phantom{xxxxx} & ax_4x_3+bx_3x_4 +cx_3x_2+dx_4x_1=0,   \\
\hbox{(R2)} \qquad \phantom{xxxxx}  & ax_3x_2+bx_2x_3 +cx_4x_3+dx_1x_2=0,   \\
\hbox{(R3)} \qquad \phantom{xxxxx}  & ax_2x_1+bx_1x_2 +cx_1x_4+dx_2x_3=0,   \\
\hbox{(R4)} \qquad \phantom{xxxxx} & ax_1x_4+bx_4x_1 +cx_2x_1+dx_3x_4=0,   \\
\hbox{(R5)} \qquad \phantom{xxxxx} &  ax_3x_1-ax_1x_3 +cx_4^2 -cx_2^2=0,      \\
\hbox{(R6)} \qquad \phantom{xxxxx} & bx_4x_2-bx_2x_4 +dx_3^2 -dx_1^2=0.   
\end{align*}
Since $a$, $b$, $c$, and $d$, enter into the relations in a homogeneous
way, the algebra $R(a,b,c,d)$ depends only on $(a,b,c,d)$ as a point
in $\PP^3$. If we impose the condition that $ac+bd=0$, we obtain a
2-dimensional family of algebras $R(a,b,c,d)$ parametrized by a
quadric (isomorphic to $\PP^1 \times \PP^1$) in $\PP^3$.  Cho, Hong,
and Lau conjecture that, when $ac+bd=0$, $R$ is a 4-dimensional
Sklyanin algebra \cite[Conj. 8.11]{CHL}.

Although only a 1-parameter family of the $R(a,b,c,d)$ are Sklyanin
algebras, we find it remarkable that almost all of them
(Zariski-densely many, that is) have the ``same'' relations as the
4-dimensional Sklyanin algebras. We do not understand the deeper
reason for this; our proof is just a calculation. We also find it
remarkable that the translation automorphism for those that are
Sklyanin algebras has order 4---the only translation automorphisms of
a degree-four elliptic curve in $\PP^3$ that extend to automorphisms
of the ambient $\PP^3$ are the translations by the points in its
4-torsion subgroup. We do not know in what way, in the context of the
work of Cho-Hong-Lau, those $R(a,b,c,d)$ that are Sklyanin algebras
are special.

\subsection{Results about $A(\a,\b,\c)$}
Suppose $\a\b\c \ne 0$. In \Cref{sect.algebras} we show that the
Heisenberg group of order $4^3$ acts as automorphisms of
$A(\a,\b,\c)$. In \Cref{prop.isom}, we determine exactly when two of
these algebras are isomorphic to each other.

In \Cref{sect.Gamma} we give a geometric interpretation of the
relations defining $A(\a,\b,\c)$. To do this we first write
$A(\a,\b,\c)$ as $TV/(R)$, the quotient of the tensor algebra $TV$ on
a 4-dimensional vector space $V$ by the ideal generated by a
6-dimensional subspace $R$ of $V^{\otimes 2}$. We then consider
elements in $V \otimes V$ as forms of bi-degree $(1,1)$ on the product
$\PP(V^*) \times \PP(V^*)$ of two copies of projective 3-space. We now
define the closed subscheme $\G \subseteq \PP(V^*) \times \PP(V^*)$ to
be the vanishing locus of the elements in
$R$. \Cref{prop.Gamma.finite.iff} shows that $\G$ is finite if and
only if $\a\b\c\ne 0$ and $\a+\b+\c+\a\b\c \ne 0$.
\Cref{prop.Gamma.finite.iff,prop.Gamma} show that in that case
\begin{enumerate}
\item{} $\G$ consists of 20 distinct points;
\item $\G$ is the graph of a bijection between 20-point subsets of
  $\PP(V^*)$;
\item $R=\{f \in V^{\otimes 2} \; | \; f|_\G=0\}$.
\end{enumerate}

\subsection{Centers} 
\cite[Thm. 2]{Skl82} states that two explicitly given degree-two
homogeneous elements, which are denoted there by $K_0$ and $K_1$,
belong to the center of the 4-dimensional Sklyanin algebra. Although
Sklyanin writes that it is ``straightforward'' to prove these elements
central, the details are left to the reader.  We and others have found
the calculations less than straightforward.\footnote{The problem of
  showing that $K_0$ and $K_1$ are central is mentioned in a talk
  given by Tom Koornwinder at Nijmegen on 12 November 2012---see {\tt
    https://staff.science.uva.nl/t.h.koornwinder/art/sheets/SklyaninAlgebra1.pdf},
  retrieved on 01-20-2017. Koornwinder says that part of the proof is
  ``straightforward'' and appeals to the Mathematica package NCAlgebra
  4.0.4 at {\tt http://www.math.ucsd.edu/\textasciitilde ncalg/ } for
  the remainder of the proof.}  Sklyanin says that an alternative
proof can be given by using a lemma in his paper \cite{KS82} with
Kulish. Presumably, the relevant lemma is equation (5.7) in
\cite{KS82}. However, ``due to space limitations [they] do not present
[t]here the complete proof of (5.7)''.  We have been unable to find a
complete proof of \cite[Thm. 2]{Skl82} in the literature so
give a direct proof that $K_0$ and $K_1$ are central in
\Cref{prop.Skly.center} below. We do not use the same notation as
Sklyanin, so in this introduction we label the elements in
\Cref{prop.Skly.center} $\Omega_0$ and $\Omega_1$.

For most 4-dimensional Sklyanin algebras $K_0$ and $K_1$, or
equivalently $\Omega_0$ and $\Omega_1$, generate the center of the
algebra.  In sharp contrast, when $\a\b\c\ne0$ and
$\a+\b+\c+\a\b\c \ne 0$ the elements $x_0^2$, $x_1^2$, $x_2^2$, and
$x_3^2$ belong to the center of $A(\a,\b,\c)$ (\Cref{prop.moore}).

Cho, Hong, and Lau, write down two degree-two elements in $R(a,b,c,d)$
that they conjecture belong to the center of $R(a,b,c,d)$.  We verify
their conjecture in \Cref{pr.O1} and \Cref{cor.center}.

It is interesting to compare the proof of these results about the
centers to the proof that the Casimir elements in the enveloping
algebras $U(\fsl_2)$ and $U_q(\fsl_2)$ belong to the center. The
latter proofs are absolutely straightforward, whereas the computations
involved in describing the centers of $A(\a,\b,\c)$ are far less
routine because these algebras do not have a PBW basis (or,
apparently, any basis that makes computation routine). See however,
the notion of an $I$-algebra in \cite{TvdB96}.

\subsection{Some questions about $A(\a,\b,\c)$}
\label{sect.qus}
Computer calculations by Frank Moore suggest that the dimensions of
the homogeneous components $A(\a,\b,\c)_n$ are
$1,4,10,16,19,20,20,20,\ldots$ when $\a+\b+\c+\a\b\c \ne 0$. Is this
true?  If so, then for a generic linear combination $\Omega$ of the
central elements $x_0^2,x_1^2,x_2^2,x_3^2 \in A(\a,\b,\c)_2$ the
localization $A[\Omega^{-1}]_0$ is a finite dimensional algebra having
dimension 20.  We expect the representation theory of these finite
dimensional algebras is interesting.

%
%If $\a\b\c \ne 0$ and $\a+\b+\c+\a\b\c \ne 0$, is $A(\a,\b,\c)$
%isomorphic to the twisted homogeneous coordinate ring (in the sense of
%\cite{AV90}) of the set $\fP$ defined in \Cref{sect.defn.P} with
%respect to the automorphism $\theta$ defined there.
%
%

We do not know if $A(\a,\b,\c)$ is a Koszul algebra (Sklyanin algebras
are) but whether it is or not its quadratic dual $A(\a,\b,\c)^!$
deserves investigation.

We show in \Cref{sect.Gamma}, when $\a+\b+\c+\a\b\c \ne 0$ and
$\a\b\c \ne 0$, that the algebra $A(\a,\b,\c)$ determines and is
determined by a configuration of 20 points in $\PP^3\times \bP^3$
that is the graph of a bijection between two 20-point elements of
$\bP^3$. We do not understand this configuration but the
representation theory of $A(\a,\b,\c)$ and these related algebras is
governed by it. The details of this are likely to be interesting and
novel.

It would be interesting to understand how the configuration of 20 points relates to the features of
$R(a,b,c,d)$ that are relevant to the work of Cho, Hong, and Lau.

\subsection{Acknowledgements}
We wish to thank Frank Moore whose computer calculations involving the
algebras $A(\a,\b,\c)$ defined in \Cref{skly.relns} were of great
assistance to us at an early stage of this project.  His calculations
showed that over certain finite fields the elements
$x_0^2,\ldots,x_3^2$ belong to the center of $A(\a,\b,\c)$ when
$\a\b\c\ne 0$ and $\a+\b+\c+\a\b\c \ne 0$.  Based on those
calculations we then proved the centrality of those elements over all
fields (\Cref{prop.moore}).

The work of the first author was partially supported by NSF grant DMS-1565226.

 %%%%%%%%%%%%%%%%%%%%%%%%%%%%%%%%
\section{Algebras $A(\a,\b,\c)$ with a Sklyanin-like presentation}
\label{sect.algebras}
 
 \subsection{Notation}
 Throughout this paper $\Bbbk$ denotes a field whose characteristic is
 not 2, and $i$ denotes a fixed square root of $-1$.
 
 Whenever we use parameters $\a,\b,\c \in \Bbbk$ we will assume they
 have square roots $a,b,c \in \Bbbk$.
 
 We fix a 4-dimensional $\Bbbk$-vector space $V$. Always,
 $x_0,x_1,x_2,x_3$ will denote a basis for $V$.

 \subsubsection{}
 We write $TV$ for the tensor algebra on $V$. Thus $TV$ is the free algebra $\Bbbk\langle x_0,x_1,x_2,x_3  \rangle$. We always consider $TV$ as an $\ZZ$-graded
 $\Bbbk$-algebra with $\deg(V)=1$. All the algebras in this paper are of the form $A=TV/(R)$ for various 6-dimensional subspaces $R$ of $V^{\otimes 2}$.
 
  \subsubsection{}%The algebras $A(\a,\b,\c)$}
  Let $\a,\b,\c \in \Bbbk$. The algebra $A(\a,\b,\c)$ is the free
  algebra $TV$ modulo the relations in \Cref{Aabc}.
  
%  \begin{align*}
 % x_0x_1-x_1x_0  & \; = \; \a(x_2x_3+x_3x_2) \qquad \qquad  x_0x_1+x_1x_0   \; = \; x_2x_3-x_3x_2  \\
%   x_0x_2-x_2x_0  & \; = \; \b(x_3x_1+x_1x_3) \qquad \qquad  x_0x_2+x_2x_0   \; = \; x_3x_1-x_1x_3  \\
%    x_0x_3-x_3x_0  & \; = \; \c(x_1x_2+x_2x_1) \qquad \qquad  x_0x_3+x_3x_0   \; = \; x_1x_2-x_2x_1.
%  \end{align*}
  
  \subsubsection{}
  \label{sssect.notn}
  We will often write $(\a_1,\a_2,\a_3)=(\a,\b,\c)$. In \Cref{sect.algebras} and \Cref{sect.Gamma}, 
  $a,b,c$ will denote fixed square roots of $\a,\b,\c$.   We will often write $(a_1,a_2,a_3)=(a,b,c)$. 

\subsubsection{}
Let $A$ be a $\ZZ$-graded $\Bbbk$-algebra.  We write $\Aut_{\gr}(A)$
for the group of graded $\Bbbk$-algebra automorphisms of $A$.

If $\l \in \Bbbk^\times$, $\phi_\l$ denotes the automorphism of $A$
that is multiplication by $\l^n$ on $A_n$. The map
$\Bbbk^\times \to \Aut_{\gr}(A)$, $\l \mapsto \phi_\l$, is an
injective group homomorphism whose image lies in the center.  We will
often identify $\l$ with $\phi_\l$. If $\psi \in \Aut(A)$, we will
write $\psi^m=\l$ if $\psi^m=\phi_\l$ and $\l\psi$ for $\phi_\l \psi$.

\subsubsection{}
\label{ssect.psi_j}
Suppose $a,b,c \in \Bbbk^\times$. We define
$\psi_1,\psi_2,\psi_3 \in \GL(V)$ by declaring that $\psi_i(x_j)$ is
the entry in row $\psi_i$ and column $x_j$ in \Cref{autom}.
\begin{table}[htp]
\begin{center}
\begin{tabular}{|c|c|c|c|c|c|c|c|}
\hline
 & $x_0$ &  $x_1$ &  $x_2$ &  $x_3$  
\\
\hline
$\psi_1 $  & $bc x_1$ &   $-i x_0$ &  $-ib x_3$ &  $-c x_2$ $\phantom{\Big)}$
\\
\hline
$\psi_2$  & $ac x_2$ &   $-a  x_3$ &  $-ix_0$ &  $ -ic x_1$ $\phantom{\Big)}$
\\
\hline
$\psi_3$  & $ab x_3$ &   $-ia x_2$ &  $-bx_1$ &  $ -i  x_0$ $\phantom{\Big)}$
\\
\hline
\end{tabular}
\end{center} 
\caption{Automorphisms $\psi_1,\psi_2,\psi_3$.}
\label{autom}
\end{table}
\newline
In the notation of \Cref{sssect.notn}, if  $(i,j,k)$ is a cyclic permutation of $(1,2,3)$, then
$$
\psi_i(x_0)=a_ja_kx_i, \qquad \psi_i(x_i)=-ix_0, \qquad \psi_i(x_j)=-ia_jx_k,  \qquad \hbox{and} \qquad \psi_i(x_k)=-a_kx_j.
$$

\subsubsection{The Heisenberg group of order $4^3$}
The Heisenberg group of order $4^3$ is 
$$
H_4 \; : = \; \langle \ve_1,\ve_2,\d \; | \; \ve_1^4=\ve_2^4=\d^4=1, \;  \d\ve_1=\ve_1\d, \; \ve_2\d=\d\ve_2, \; \varepsilon_1\varepsilon_2=\delta \varepsilon_2\varepsilon_1\rangle.
$$

 \subsection{} %The action of the Heisenberg group as automorphisms of $A(\a,\b,\c)$}
 \label{sect.gamma_js}

By  \cite{FO98} and \cite[pp. 64-65]{SSJ93}, for example, the Heisenberg group $H_4$ acts as graded 
$\Bbbk$-algebra automorphisms of the 4-dimensional Sklyanin algebras when $\Bbbk=\CC$. 
The next result records the fact that  $H_4$ acts as graded $\Bbbk$-algebra automorphisms of $A(\a,\b,\c)$ whenever $\a\b\c \ne 0$ and $\Bbbk$ is a field having 
square roots of $\a$, $\b$, $\c$, and $-1$.  

\begin{proposition} 
\label{prop.aut.A}
%Let  $\a,\b,\c \in \Bbbk$ and assume 
Suppose $\a\b\c \ne 0$. Fix $\nu_1,\nu_2,\nu_3 \in \Bbbk^\times$ such that $a\nu_1^2=b\nu_2^2=c\nu_3^2=-iabc$. 
\begin{enumerate}
  \item 
  The maps $\psi_1,\psi_2,\psi_3:V \to V$ in   \Cref{autom} extend to $\Bbbk$-algebra automorphisms
of $A(\a,\b,\c)$.
  \item 
  There is an injective homomorphism $H_4 \to \Aut_{\sf gr}(A)$ given by 
$$
\ve_1\mapsto  \nu_1^{-1}\psi_1, \qquad \ve_2 \mapsto  \nu_2^{-1}\psi_2, \qquad \ve_3  \mapsto \nu_3^{-1}\psi_3.
$$ 
Under this map, $\d \mapsto \phi_i$, the automorphism that is multiplication by $i^n$ on $A(\a,\b,\c)_n$. 
\item{}
The subgroup of $\Aut_{\sf gr}(A)$ generated by $\c_1:=\ve_1^2$, $\c_2:=\ve_2^2$, and $\c_3:=\ve_3^2$ is isomorphic to $\ZZ_2 \times \ZZ_2$. The value of
$\c_i(x_j)$ is the  entry in row $\c_i$ and column  $x_j$ of Table \Cref{Gamma.action} below. 
\begin{table}[htp]
\begin{center}
\begin{tabular}{|c|c|c|c|c|c|c|c|}
\hline
 & $x_0$ &  $x_1$ &  $x_2$ &  $x_3$
\\
\hline
$\c_1 $  & $x_0$ &  $x_1$ &  $-x_2$ &  $-x_3$
\\
\hline
$\c_2 $  & $x_0$ &  $-x_1$ &  $x_2$ &  $-x_3$
\\
\hline
$\c_3$  & $x_0$ &  $-x_1$ &  $-x_2$ &  $x_3$
\\
\hline
\end{tabular}
\end{center}
\vskip .12in
\caption{The action of $\ZZ_2\times \ZZ_2$ as automorphisms of $A$.}
 \label{Gamma.action}
\end{table}
\newline 
\end{enumerate}
\end{proposition}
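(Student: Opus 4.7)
The plan is to handle the three parts in order. Since $A(\alpha,\beta,\gamma) = TV/(R)$ with $R \subseteq V^{\otimes 2}$ the six-dimensional subspace spanned by the relations in \Cref{skly.relns}, any linear automorphism $\psi \in \GL(V)$ extends to a graded $\Bbbk$-algebra automorphism of $A$ if and only if $(\psi \otimes \psi)(R) \subseteq R$. For part (1) I would therefore apply $\psi_i \otimes \psi_i$ to each of the six spanning relations and check that each is carried into $R$. The relations and the triple $\psi_1,\psi_2,\psi_3$ share a manifest cyclic symmetry under $(1,2,3) \mapsto (2,3,1)$, together with $(\alpha,\beta,\gamma) \mapsto (\beta,\gamma,\alpha)$ and $(a,b,c) \mapsto (b,c,a)$, which cuts the work down to verifying that a single $\psi_i$ (say $\psi_1$) stabilises $R$. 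A short computation then shows, for example, that $[\psi_1(x_0),\psi_1(x_1)] = ibc\,[x_0,x_1]$ and $\alpha\{\psi_1(x_2),\psi_1(x_3)\} = ibc\,\alpha\{x_2,x_3\}$, so the first defining relation is carried to a scalar multiple of itself; the remaining five relations are handled in the same fashion, with some being sent to scalar multiples of other relations in the list rather than to themselves.

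For part (2), assuming (1), a direct calculation on the basis yields $\psi_i^2 = -i\,a_j a_k\,\gamma_i$, where $\gamma_i$ is the sign involution appearing in the table of part (3). The stipulated identity $a_i \nu_i^2 = -iabc$ rearranges to $\nu_i^2 = -i\,a_j a_k$, so $(\nu_i^{-1}\psi_i)^2 = \gamma_i$ is an involution and hence $(\nu_i^{-1}\psi_i)^4 = 1$. The centrality of $\phi_i$ in the image is automatic, since scalar automorphisms commute with every graded map, and $\phi_i^4 = 1$ because $i^4 = 1$. For the braiding relation $\varepsilon_1\varepsilon_2 = \delta\,\varepsilon_2\varepsilon_1$, I would evaluate both sides on each basis vector and check that the ratio is $i$ throughout; a representative computation is $\varepsilon_1\varepsilon_2(x_0) = -iabc\,\nu_1^{-1}\nu_2^{-1}\,x_3$ against $\varepsilon_2\varepsilon_1(x_0) = -abc\,\nu_1^{-1}\nu_2^{-1}\,x_3$, whose quotient is $i$ as required. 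Injectivity of $H_4 \to \Aut_{\gr}(A)$ then follows from the facts that $\delta \mapsto \phi_i$ has order $4$ and that the quotient $H_4/\langle\delta\rangle \cong \ZZ_2 \times \ZZ_2$ acts faithfully on $V$ through the three distinct sign involutions $\gamma_1,\gamma_2,\gamma_3$.

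Part (3) is largely a byproduct of the computation in part (2): the explicit formula $\psi_i^2 = -i\,a_j a_k\,\gamma_i$ reads off the table of $\gamma_i$-actions, and it is immediate by inspection that the three $\gamma_i$ are pairwise commuting involutions satisfying $\gamma_1\gamma_2 = \gamma_3$, so they generate a copy of $\ZZ_2\times \ZZ_2$. The main obstacle in the whole argument is really just the bookkeeping in part (1): even after the cyclic symmetry is exploited, one must carefully track the signs and the factors of $i,a,b,c$ across the six defining relations, noting along the way that certain pairs of commutator- and anticommutator-type relations are swapped (up to a scalar) under the action of $\psi_i$ rather than being preserved individually. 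Everything else reduces to a straightforward evaluation on the basis $x_0,x_1,x_2,x_3$.
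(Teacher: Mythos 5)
Your overall strategy coincides with the paper's: part (1) is the same direct verification that each $\psi_i\otimes\psi_i$ preserves the six-dimensional relation space (the paper packages this via Sklyanin's criterion \Cref{autom.conds}, but the underlying computation is identical, including the observation that some commutator-type relations are swapped with anticommutator-type ones up to scalars); your formulas $\psi_i^2=-ia_ja_k\c_i$ and the normalization $\nu_i^2=-ia_ja_k$ giving $\ve_i^2=\c_i$ match the paper exactly, as does the check of the braiding relation on basis vectors; and part (3) is read off from these computations in both treatments.

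There is, however, a genuine error in your injectivity argument for part (2). You assert that $H_4/\langle\d\rangle\cong\ZZ_2\times\ZZ_2$ and that it acts faithfully through the three sign involutions $\c_1,\c_2,\c_3$. Since $|H_4|=64$ and $\d$ has order $4$, the quotient $H_4/\langle\d\rangle$ has order $16$ and is isomorphic to $\ZZ_4\times\ZZ_4$, not $\ZZ_2\times\ZZ_2$; the subgroup $\{\id,\c_1,\c_2,\c_3\}$ is only the image of $\langle\ve_1^2,\ve_2^2\rangle$ and cannot detect elements such as $\ve_1$ or $\ve_1\ve_2^2$ whose $\ve_1$- or $\ve_2$-exponent is odd. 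To repair the argument, write a general element of $H_4$ as $\d^l\ve_1^m\ve_2^n$ and observe that its image permutes the four coordinate lines $\Bbbk x_0,\dots,\Bbbk x_3$ via $(01)(23)^m(02)(13)^n$; triviality of the image forces $m$ and $n$ to be even, after which the image is the diagonal map $\phi_i^l\c_1^{m/2}\c_2^{n/2}$, whose $x_0$-eigenvalue is $i^l$ while the remaining entries are $\pm i^l$; triviality then forces $l\equiv 0\pmod 4$ and $\c_1^{m/2}\c_2^{n/2}=\id$, hence $m\equiv n\equiv 0\pmod 4$. (The paper leaves injectivity to the reader, so you are supplying a step it omits, but the step as you state it does not hold.)
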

\begin{proof}
Let   $(i,j,k)$ be a cyclic permutation of $(1,2,3)$ and let $\l_0,\l_i,\l_j,\l_k \in \Bbbk^\times$.
In \cite[Prop. 4]{Skl83}, Sklyanin observed that the linear map  $\psi:V \to V$  acting on $x_0,x_i,x_j,x_k$ as
\begin{table}[htp]
\begin{center}
\begin{tabular}{|c|c|c|c|c|c|c|c|}
\hline
 & $x_0$ &  $x_i$ &  $x_j$ &  $x_k$
\\
\hline
$\psi$  & $\l_0x_i$ &   $\l_ix_0$ &  $\l_jx_k$ &  $\l_kx_j$ 
\\
\hline
\end{tabular}
\end{center} 
\end{table}

\noindent
extends to an automorphism of  the Sklyanin algebra  if and only if  
\begin{equation}
\label{autom.conds}
\frac{\l_0\l_i}{\l_j\l_k} \; = \; -1, \qquad 
\frac{\l_0\l_j}{\l_k\l_i} \; = \; -\a_j, \quad \hbox{and} \qquad
\frac{\l_0\l_k}{\l_i\l_j} \; = \;  \a_k.
\end{equation}
A straightforward calculation shows that $\psi$ extends to an automorphism of $A(\a,\b,\c)$ without any restriction on $\a,\b,\c$ other than $\a\b\c \ne 0$ 
if and only if \Cref{autom.conds} holds. The maps $\psi_1$, $\psi_2$, and $\psi_3$, satisfy these conditions so extend to graded $\Bbbk$-algebra automorphisms of $A$.  

It is easy to check that $\psi_1\psi_2=\d\psi_2\psi_1$, $\psi_2\psi_3=\d\psi_3\psi_2$, and $\psi_3\psi_1=\d\psi_1\psi_3$.
It follows that $\ve_1\ve_2=\d\ve_2\ve_1$, $\ve_2\ve_3=\d\ve_3\ve_2$ ,and $\ve_3\ve_1=\d\ve_1\ve_3$.  

It is easy to check that $\c_1,\c_2,\c_3$ act on $x_0,x_1,x_2,x_3$ as in \Cref{Gamma.action}.   Hence $\ve_1^4=\ve_2^4=\ve_3^4=1$. 
Simple calculations show that  $\psi_1^2=-ibc\c_1$,  $\psi_2^2=-iac\c_2$,  and $\psi_3^2=-iab\c_3$, where $\c_1$, $\c_2$, and $\c_3$, are the automorphisms in \Cref{Gamma.action}.  
 We leave the rest of the proof to the reader. 
\end{proof}

 \subsubsection{}
 The maps $\c_i \in \GL(V)$ given by \Cref{Gamma.action} extend to graded $\Bbbk$-algebra  automorphisms of $A(\a,\b,\c)$ for {\it all} $\a,\b,\c \in \Bbbk$.

\subsection{}
In the next result, whose proof we omit, 
$([A,A])$ denotes the ideal in $A$ generated by all commutators $ab-ba$, $a,b \in A$. Thus,  $A/([A,A])$ is the largest commutative quotient of $A$.

\begin{proposition}
Suppose $\a\b\c \ne 0$. Let $A=A(\a,\b,\c)$.  %Then %As a quotient of the polynomial ring,
\begin{enumerate}
  \item 
  As a quotient of the polynomial ring $\Bbbk [x_0,x_1,x_2,x_3]$, %$A/([A,A])$ is equal to
  $$
\frac{A}{([A,A])} \; = \;    %\frac{\Bbbk [x_0,x_1,x_2,x_3]}{(x_px_q \; | \; 0 \le p<q\le 3) \; = \; 
\frac{\Bbbk [x_0,x_1,x_2,x_3]}{(x_1,x_2,x_3) \cap (x_0,x_2,x_3) \cap(x_0,x_1,x_3) \cap(x_0,x_1,x_2)}\,.
$$
  \item 
  As a subscheme of $\PP(V^*)$, 
  $$
\Proj\bigg(\frac{A}{([A,A])}\bigg) \; =\; \{e_0=(1,0,0,0), e_1=(0,1,0,0), e_2=(0,0,1,0), e_3=(0,0,0,1)\}.% \; \subseteq \; \PP(V^*).
$$
  \item 
  $A$ has exactly four graded quotients  that are polynomial rings in one variable, namely the quotients by
the ideals $(x_1,x_2,x_3)$, $(x_0,x_2,x_3)$, $(x_0,x_1,x_3)$, and $(x_0,x_1,x_2)$.
\end{enumerate}
\end{proposition}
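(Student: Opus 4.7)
The plan is to describe $A/([A,A])$ explicitly, after which parts (2) and (3) will follow by short geometric arguments.

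For part (1), I pass to the commutator quotient: there $[x_p,x_q]=0$ and $\{x_p,x_q\}=2x_px_q$, so the six defining relations \Cref{skly.relns} collapse to $2\a_i x_j x_k=0$ and $2 x_0 x_i=0$ for each cyclic $(i,j,k)$. Since $\mathrm{char}\,\Bbbk\ne 2$ and $\a\b\c\ne 0$, these force $x_p x_q=0$ for every pair $p\ne q$ in $\{0,1,2,3\}$, so $A/([A,A])$ is $\Bbbk[x_0,x_1,x_2,x_3]/I$ with $I=(x_p x_q \mid p\ne q)$. The identification $I = \bigcap_{k=0}^{3}(x_0,\ldots,\widehat{x}_k,\ldots,x_3)$ is then a standard monomial check: $\subseteq$ is immediate since each generator $x_p x_q$ lies in every factor ideal, and $\supseteq$ holds because a monomial lies in the intersection iff it involves at least two distinct variables, in which case it is divisible by some $x_p x_q$ with $p\ne q$. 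This proves (1).

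Part (2) follows at once since the four prime ideals on the right are the homogeneous vanishing ideals of the four coordinate points $e_0,\ldots,e_3$ in $\PP(V^*)$. For part (3), any graded $\Bbbk$-algebra surjection $\pi\colon A \to \Bbbk[t]$ with $\deg t=1$ has commutative image and hence factors through $A/([A,A])$; writing $\pi(x_p)=c_p t$, the relations $x_p x_q=0$ force $c_p c_q=0$ for all $p\ne q$, so at most one $c_p$ is nonzero, and surjectivity of $\pi$ forces exactly one $c_p$ to be nonzero. The kernel of $\pi$ is then one of the four listed ideals, and each of the resulting quotients is manifestly isomorphic to $\Bbbk[x_i]$. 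The whole argument is essentially bookkeeping; the only step that requires a small observation is the reduction of the Sklyanin-like relations to the monomial relations $x_p x_q=0$, and for this the hypotheses $\mathrm{char}\,\Bbbk\ne 2$ and $\a\b\c\ne 0$ are both essential.
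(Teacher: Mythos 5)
Your proposal is correct. Note that the paper explicitly omits the proof of this proposition, so there is nothing to compare against; your argument — abelianize to see that the six relations collapse to $x_px_q=0$ for $p\ne q$ (using $\mathrm{char}\,\Bbbk\ne 2$ and $\a\b\c\ne 0$), identify that monomial ideal with the intersection of the four coordinate primes, and deduce (2) and (3) — is exactly the routine verification the authors leave to the reader. The only point worth making explicit in (3) is that since $A$ is generated in degree one, any graded quotient that is a polynomial ring must have its generator in degree one, which justifies writing $\pi(x_p)=c_pt$; and conversely each $A/(x_q : q\ne p)$ really is $\Bbbk[x_p]$ because every monomial in the defining relations is a product of two distinct generators and hence dies in that quotient.
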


\begin{lemma}
\label{lem.isom}
There are algebra isomorphisms
\begin{align*}
A(\a,\b,\c)  & \; \cong \; A(\b,\c,\a) \;  \cong \; A(\c,\a,\b)
\\
 & \; \cong \;   A(-\a,-\c,-\b)   \; \cong \;   A(-\b,-\a,-\c)  \; \cong \;   A(-\c,-\b,-\a) .
\end{align*}
\end{lemma}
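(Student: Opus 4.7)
The plan is to exhibit, for each of the five target algebras, an explicit graded linear isomorphism $\phi: TV \to TV$ that sends the relation space $R$ of $A(\a,\b,\c)$ onto the relation space of the target, so that $\phi$ descends to a graded algebra isomorphism between the two. All maps will fix $x_0$ and act on $\operatorname{span}\{x_1,x_2,x_3\}$ by a signed permutation; the question reduces to choosing the signs correctly.

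For the cyclic isomorphisms $A(\a,\b,\c) \cong A(\b,\c,\a) \cong A(\c,\a,\b)$ no signs are needed. Let $\sigma$ be the 3-cycle $1 \mapsto 2 \mapsto 3 \mapsto 1$ and define $\phi$ by $\phi(x_0) = x_0$ and $\phi(x_i) = x_{\sigma^{-1}(i)}$ for $i = 1,2,3$. Since the six defining relations of $A$ are indexed by cyclic triples $(i,j,k)$, the relation $[x_0,x_i] = \a_i\{x_j,x_k\}$ transports under $\phi$ to $[x_0,x_{\sigma(i)}] = \a_i\{x_{\sigma(j)},x_{\sigma(k)}\}$, which is the relation of $A(\a_{\sigma^{-1}(1)},\a_{\sigma^{-1}(2)},\a_{\sigma^{-1}(3)}) = A(\c,\a,\b)$ with index $\sigma(i)$; the anticommutator relations transport by the same bookkeeping. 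Applying the construction twice yields the isomorphism with $A(\b,\c,\a)$.

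For the three sign-swapped isomorphisms, I will show that the map
\[
\phi : x_0 \mapsto x_0, \qquad x_1 \mapsto -x_1, \qquad x_2 \mapsto x_3, \qquad x_3 \mapsto x_2
\]
induces $A(\a,\b,\c) \cong A(-\a,-\c,-\b)$; the remaining two isomorphisms will follow by composing $\phi$ with the cyclic permutations. Applying $\phi$ to the first commutator relation gives $[-x_1,x_0] = \a\{x_3,x_2\}$, i.e. $[x_0,x_1] = -\a\{x_2,x_3\}$, so the first parameter flips to $-\a$. The second relation $[x_0,x_2] = \b\{x_3,x_1\}$ becomes $[x_0,x_3] = -\b\{x_2,x_1\} = -\b\{x_1,x_2\}$, which matches the third relation of $A(-\a,-\c,-\b)$ with coefficient $-\b$. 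Similarly the third commutator relation transforms to the second relation with coefficient $-\c$. For the anticommutators $\{x_0,x_i\} = [x_j,x_k]$, the anticommutator is symmetric so the swap $x_2 \leftrightarrow x_3$ is invisible on the right, while the commutator $[x_j,x_k]$ picks up a sign exactly when one of $j,k$ equals $1$ (from the negation of $x_1$), and these signs cancel consistently with the signs picked up on the left from the $x_1$-negation.

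There is no genuine obstacle; the entire proof is a verification that a small system of sign conditions is satisfied simultaneously by the commutator and anticommutator relations under the chosen signed permutations. The mild bookkeeping point to watch is that the map $\phi$ above must flip signs on the commutator relations (since commutators are antisymmetric) in precisely the way needed to realize the prescribed change in parameters, and this is what forces the pattern $(\a,\b,\c) \mapsto (-\a,-\c,-\b)$ rather than a pure transposition.
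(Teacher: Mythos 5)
Your proposal is correct and follows essentially the same route as the paper: cyclic permutation of $x_1,x_2,x_3$ for the first row of isomorphisms, and a signed transposition fixing $x_0$ for $A(\a,\b,\c)\cong A(-\a,-\c,-\b)$, with the remaining cases obtained by composition. Your map $x_1\mapsto -x_1$, $x_2\mapsto x_3$, $x_3\mapsto x_2$ is in fact the one forced by the identities displayed in the paper's proof (the paper's closing sentence writes $x_3\mapsto -x_2$, which appears to be a sign typo, since that variant does not carry the relation space of $A(\a,\b,\c)$ onto that of $A(-\a,-\c,-\b)$), so your verification is the accurate version of the same argument.
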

\begin{proof}
There is an isomorphism $A(\a,\b,\c) \stackrel{\sim}{\longrightarrow} A(\b,\c,\a)$ given by $x_0 \mapsto x_0$ and $x_i \mapsto x_{i+1}$ for $i \in \{1,2,3\}=\ZZ/3$.
Similarly, $A(\b,\c,\a) \cong A(\c,\a,\b)$.
 Since
$$
\phantom{xxiii} [x_0,-x_1]=-\a\{x_3,x_2\}, \qquad \phantom{ii}  [x_0,x_3]=-\c\{x_2,-x_1\}, \qquad \phantom{ii}  [x_0,x_2]=-\b\{-x_1,x_3\}, 
$$
and
$$
\{x_0,-x_1\}=[x_3,x_2], \qquad \qquad \{x_0,x_3\}=[x_2,-x_1], \qquad \qquad \{x_0,x_2\}=[-x_1,x_3], 
$$
there is an isomorphism $A(\a,\b,\c) \stackrel{\sim}{\longrightarrow} A(-\a,-\c,-\b)$ given by $x_0 \mapsto x_0$, $x_1 \mapsto -x_{1}$, $x_2 \mapsto x_3$, and 
$x_3 \mapsto -x_2$.
\end{proof} 

\begin{proposition}
\label{prop.isom}
Suppose $\a\b\c \ne 0$ and $\a'\b'\c' \ne 0$.
Then $A(\a,\b,\c) \cong A(\a',\b',\c')$ as graded $\Bbbk $-algebras 
if and only if $(\a',\b',\c')$ is a cyclic permutation of either $(\a,\b,\c)$ or $(-\a,-\b,-\c)$.
\end{proposition}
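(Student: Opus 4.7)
The ``if'' direction is supplied by \Cref{lem.isom}, which produces the six isomorphisms realizing the proposed orbit. So the content is the ``only if'' direction: let $\phi:A(\a,\b,\c)\to A(\a',\b',\c')$ be a graded $\Bbbk$-algebra isomorphism. The plan is to show that the induced linear map $\phi_1:=\phi|_V$ is monomial with respect to the bases $\{x_0,\ldots,x_3\}$, and then enumerate the possible monomial maps.

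To prove monomiality I would invoke the preceding proposition, which intrinsically characterizes $(x_1,x_2,x_3), (x_0,x_2,x_3), (x_0,x_1,x_3), (x_0,x_1,x_2)$ as the only four graded ideals of $A(\a,\b,\c)$ whose quotient is a polynomial ring in one generator. Since $\phi$ must preserve this distinguished set of four ideals, $\phi_1$ must permute the four coordinate lines $\Bbbk x_0,\ldots,\Bbbk x_3$; hence $\phi_1=DP$ for a permutation matrix $P$ and a diagonal matrix $D$ in these bases. To reduce $P$ further I would use \Cref{prop.aut.A}: each of the automorphisms $\psi_1,\psi_2,\psi_3$ of the target swaps $\Bbbk x_0$ with one of $\Bbbk x_1,\Bbbk x_2,\Bbbk x_3$, so the Klein-four subgroup they generate acts transitively on the set of four coordinate lines. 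Composing $\phi$ with an appropriate $\psi_i$ on the target side, I may assume $\phi_1(\Bbbk x_0)=\Bbbk x_0$. Then $P$ corresponds to a permutation $\sigma\in S_3$ of $\{1,2,3\}$ and $\phi_1$ has the explicit monomial form $x_0\mapsto\lambda_0 x_0$, $x_i\mapsto\lambda_i x_{\sigma(i)}$ for nonzero scalars $\lambda_0,\ldots,\lambda_3$.

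For each of the six $\sigma\in S_3$ I would substitute this $\phi_1$ into the twelve defining relations and demand that each image be a target relation. Each bracket relation $[x_0,x_i]=\a_i\{x_j,x_k\}$ yields an equation of the form $\lambda_0\lambda_i\,\a_{\sigma(i)}'=\a_i\lambda_j\lambda_k$ (the symmetry of $\{-,-\}$ absorbs any reordering), and each companion relation $\{x_0,x_i\}=[x_j,x_k]$ yields $\lambda_0\lambda_i=\mathrm{sgn}(\sigma)\lambda_j\lambda_k$, where the sign comes from the antisymmetry of $[-,-]$ together with the fact that $\sigma$ preserves the cyclic order on $\{1,2,3\}$ iff it is even. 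Dividing gives $\a_{\sigma(i)}'=\mathrm{sgn}(\sigma)\,\a_i$. The three even $\sigma$ therefore reproduce the three cyclic permutations of $(\a,\b,\c)$, and the three odd $\sigma$ reproduce the three sign-changed tuples listed in \Cref{lem.isom}, exhausting the orbit.

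The main obstacle will be the bookkeeping in that last step: for each $\sigma$ one must check that the system of six scalar equations on $\lambda_0,\ldots,\lambda_3$ is actually consistent (so that a genuine isomorphism realizes the prediction) and one must track the sign carefully when $\sigma$ reorders the arguments of $[-,-]$ and $\{-,-\}$. The $S_3$-symmetry among the three even, respectively three odd, permutations reduces the genuine work to two representative cases (for example $\sigma=(1\,2\,3)$ and $\sigma=(1\,2)$), from which the remaining four follow by relabeling.
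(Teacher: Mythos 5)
Your argument is correct and follows essentially the same route as the paper: both first deduce from the abelianization (equivalently, from the four distinguished one-variable polynomial quotients) that any graded isomorphism must be monomial in the $x_i$, and then match the twelve relations to force $\a'_{\sigma(i)}=\mathrm{sgn}(\sigma)\,\a_i$. The only organizational difference is that you normalize by composing with the automorphisms $\psi_i$ so that $\Bbbk x_0$ is fixed and then treat the six $\sigma\in S_3$, whereas the paper introduces the invariant $\langle p,q,r,s\rangle$ and tabulates it over all twenty-four permutations; also, the consistency check you flag as the ``main obstacle'' is not actually needed for the ``only if'' direction, since the existence of the isomorphism is hypothesized rather than constructed.
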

\begin{proof}
($\Leftarrow$)
This is the content of \Cref{lem.isom}.

($\Rightarrow$)
Before starting the proof we introduce some notation.
%Before determining when $A(\a_1,\a_2,\a_3)$ is isomorphic to $ A(\b_1,\b_2,\b_3)$, we attach some invariants to $A$.
If $(p,q,r,s)$ is a permutation of $(0,1,2,3)$ we define
$$
 %\langle x_p,x_q,x_r,x_s\rangle
 \langle p,q,r,s\rangle_A  := (\mu_1\nu_1, \mu_2\nu_2,\mu_3\nu_3) \; \in \; \Bbbk^3
 $$
%$$
%\langle p,q,r,s\rangle := (\mu_1\nu_1, \mu_2\nu_2,\mu_3\nu_3)
%$$
where $\mu_1,\mu_2,\mu_3,\nu_1,\nu_2,\nu_3$ are the unique scalars such that
\begin{align*}
[x_p,x_q] & \; = \; \mu_1\{x_r,x_s\}, \qquad \nu_1\{x_p,x_q\} \; = \; [x_r,x_s],
\\
[x_p,x_r] & \; = \; \mu_2\{x_r,x_s\}, \qquad \nu_2\{x_p,x_r\} \; = \; [x_s,x_q],
\\
[x_p,x_s] & \; = \; \mu_3\{x_r,x_s\}, \qquad \nu_3\{x_p,x_s\} \; = \; [x_q,x_r],
\end{align*}
in $A$.
%Let $\circlearrowright\{(\a_1,\a_2,\a_3)\} = \{(\a_1,\a_2,\a_3), (\a_2,\a_3,\a_1), (\a_3,\a_1,\a_2)\}$.
%For the sake of brevity, let's write $\langle p,q,r,s\rangle = \langle x_p,x_q,x_r,x_s\rangle$.
It is easy to see that 
\begin{equation}
\label{angle.invariants}
\begin{cases}
\langle 0,1,2,3 \rangle_A= \langle 1,0,3,2 \rangle_A =  \langle 2,3,0,1 \rangle_A = \langle 3,2,1,0 \rangle_A = (\a_1,\a_2,\a_3) & \; \text{and}  \;
\\
\langle 0,1,3,2 \rangle_A =  \langle 1,2,3,0 \rangle_A = \langle 2,1,0,3 \rangle_A= \langle 3,1,2,0 \rangle_A =(-\a_1,-\a_3,-\a_2). &
\end{cases}
 \end{equation}
 If $ \langle p,q,r,s\rangle_A  %\langle x_p,x_q,x_r,x_s\rangle 
 = (\l_1, \l_2,\l_3)$, then %$\langle x_p,x_r,x_s,x_q\rangle 
 $\langle p,r,s,q\rangle _A= (\l_2,\,\l_3,\l_1)$. Using this and the equalities in \Cref{angle.invariants}, it is easy to compute 
  $\langle p,q,r,s\rangle_A $ %$\langle x_p,x_q,x_r,x_s\rangle$  
    for all permutations $(p,q,r,s)$ of $(0,1,2,3)$.

Let's write $A=A(\a,\b,\c)$ and $B=A(\a',\b',\c')$. To distinguish the presentation of $A$ from that for $B$ we will write $x_0,x_1,x_2,x_3$ for the generators of $A$,
as in \Cref{skly.relns}, and write $x_0',x_1',x_2',x_3'$ for the generators of $B$. Thus, if $(\b_1,\b_2,\b_3)=(\a',\b',\c')$, then 
$[x_0',x_i']=\b_i\{x_j',x_k'\}$ and $\{x_0',x_i'\}=[x'_j,x'_k]$ for each cyclic permutation $(i,j,k)$ of $(1,2,3)$.

Suppose $\Phi:A \to B$ is an isomorphism of graded $\Bbbk$-algebras. The restriction of $\Phi$ to $A_1$ is a vector space isomorphism
$A_1 \to B_1$. It induces an isomorphism $\varphi:\PP(B_1^*) \to \PP(A_1^*)$. Let's denote the points
$(1,0,0,0),(0,1,0,0), (0,0,1,0), (0,0,0,1) \in \PP(B_1^*)$ by $e_0',e_1',e_2',e_3'$ respectively. 
Since $\Phi$ induces an isomorphism $A/([A,A]) \to B/([B,B])$, $\varphi$ restricts to an isomorphism
$\Proj(B/([B,B]) \to \Proj(A/([A,A])$. Therefore $\varphi(\{e_0',e_1',e_2',e_3'\})=\{e_0,e_1,e_2,e_3\}$.  Since each $x_m$ vanishes at exactly 3 points in $\{e_0,e_1,e_2,e_3\}$, $\Phi(x_m)$ vanishes at exactly 3 points in 
$\{e_0',e_1',e_2',e_3'\}$. It follows that there are non-zero scalars $\l_0,\l_1,\l_2,\l_3$ and a permutation $(p,q,r,s)$
of $(0,1,2,3)$ such that $\Phi(x_0)=\l_0x_p'$, $\Phi(x_1)=\l_1x_q'$, $\Phi(x_2)=\l_2x_r'$, and $\Phi(x_3)=\l_3x_s'$.

Since $\{x_0,x_i\}=[x_j,x_k]$ for every cyclic permutation $(i,j,k)$ of $(1,2,3)$,  
$$
\l_0\l_1\{x_p',x_q'\}=\l_2\l_3[x_r',x_s'], \quad \l_0\l_2\{x_p',x_r'\}=\l_3\l_1[x_s',x_q'], \quad \l_0\l_3\{x_p',x_s'\}=\l_1\l_2[x_q',x_r'].  
$$
Since $[x_0,x_i]=\a_i\{x_j,x_k\}$ for every cyclic permutation $(i,j,k)$ of $(1,2,3)$,  
$$
\l_0\l_1[x_p',x_q']=\a_1\l_2\l_3\{x_r',x_s'\}, \quad \l_0\l_2[x_p',x_r']=\a_2\l_3\l_1\{x_s',x_q'\}, \quad \l_0\l_3[x_p',x_s']=\a_3\l_1\l_2\{x_q',x_r'\}.  
$$

It follows that 
$$
[x_p',x_q']= \a_1\l_0^{-1}\l_1^{-1}\l_2\l_3\{x_r',x_s'\},
\qquad \qquad [x_r',x_s']=\l_0\l_1\l_2^{-1}\l_3^{-1}\{x_p',x_q'\},
$$
$$
[x_p',x_r']= \ \a_2\l_0^{-1}\l_2^{-1}\l_3\l_1\{x_s',x_q'\},
\qquad \qquad [x_s',x_q']=\l_0\l_2\l_3^{-1}\l_1^{-1}\{x_p',x_r'\}, % \qquad \hbox{and} \qquad 
$$
$$
[x_p',x_s']= \a_3\l_0^{-1}\l_3^{-1}\l_1\l_2\{x_q',x_r'\},
\qquad   \qquad [x_q',x_r']=\l_0\l_3\l_1^{-1}\l_2^{-1}\{x_p',x_s'\}.
$$
Therefore %$\langle x_p',x_q',x_r',x_s'\rangle
$\langle
p,q,r,s \rangle_B=( \a,\b,\c)= \langle
x_0,x_1,x_2,x_3\rangle$.  It now follows from \Cref{angle.invariants}
and the sentence after it that $\langle
0,1,2,3\rangle_B$ is a cyclic permutation of either $(
\a,\b,\c)$ or $(-\a,-\b,-\c)$; since
%$\langle x_0',x_1',x_2',x_3'\rangle
$\langle 0,1,2,3\rangle_B =(\a',\b',\c')$, the proof is complete.
\end{proof}

\section{The zero locus of the relations for $A(\a,\b,\c)$}
\label{sect.Gamma}

The ideas in \Cref{sect.4.6.algs} apply to all graded algebras defined by 4 generators and 6 quadratic relations, i.e., to all algebras $A$ of the form
$TV/(R)$ where $V$ and $R$ are as in the next paragraph. 

%\subsection{A geometric description of the relations for $A(\a_1,\a_2,\a_3)$}
\subsection{Quadratic algebras on 4 generators with 6 relations}
\label{sect.4.6.algs}
Let  $V$ be a 4-dimensional vector space over $\Bbbk$, $R$ a 6-dimensional subspace of $V^{\otimes 2}$.
%The relations defining $A$ span a six-dimensional subspace $R \subseteq A_1^{\otimes 2}$. 
%Let $\G \subseteq \PP(A_1^*) \times \PP(A_1^*) \cong \PP^3 \times \PP^3$ be the scheme-theoretic vanishing locus of $R$. 
Let $\PP= \PP(V^*) \cong \PP^3$. Let $\G  \subseteq \PP \times \PP$ be the scheme-theoretic zero locus of $R$ (viewed as forms of bi-degree $(1,1)$). 
For example, if  $A$ is the polynomial ring, then $R$ consists of the skew-symmetric tensors and $\G$ is the diagonal.

Since $\dim_\Bbbk(R)=6=\dim(\PP^3 \times \PP^3)$, $\G \ne \varnothing$.

\begin{proposition}
\label{prop.20.pts.chow}
Suppose $\dim(\G)=0$. Then 
\begin{enumerate}
  \item 
  $\G$ consists of 20 points counted with multiplicity, and 
  \item 
the subspace of $V \otimes V$ that vanishes on  $\G$ is $R$ \cite[Thm. 4.1]{ShV02}. 
\end{enumerate}
\end{proposition}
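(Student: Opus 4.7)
The plan is to attack the two parts by rather different means: part (1) is a direct intersection-theoretic calculation on $\PP \times \PP$, while part (2) can be read off from the Shelton--Vancliff theorem already cited in the statement.

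For part (1), the ambient $\PP \times \PP \cong \PP^3 \times \PP^3$ is a smooth projective 6-fold whose Chow ring is $\ZZ[h_1,h_2]/(h_1^4, h_2^4)$, with $h_i$ the pullback of the hyperplane class from the $i$-th factor. Every element of $R$ is bi-homogeneous of bi-degree $(1,1)$, hence its vanishing defines a divisor of class $h_1+h_2$. Choosing a basis of $R$ exhibits $\G$ as the intersection of six such divisors in a 6-dimensional ambient variety. The hypothesis $\dim(\G)=0$ forces this intersection to have the expected codimension, so it is proper; by standard intersection theory, the class of $\G$ in the top Chow group equals $(h_1+h_2)^6$. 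Expanding binomially and reducing modulo $h_1^4$ and $h_2^4$, only the middle term $\binom{6}{3} h_1^3 h_2^3 = 20\, h_1^3 h_2^3$ survives, and this pairs with the fundamental class to give the length of $\G$ as $20$.

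For part (2), I would simply invoke \cite[Thm. 4.1]{ShV02}: under the hypothesis that $R$ has dimension equal to the codimension of its vanishing locus in $\PP \times \PP$, every bi-degree $(1,1)$ form vanishing on $\G$ must already lie in $R$. Since the proof is essentially an application of the quoted result, there is nothing to add beyond verifying that the hypotheses of that theorem match our setting, which is immediate.

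The main point requiring care is the justification that the Chow-ring calculation in part (1) genuinely computes the scheme-theoretic length of $\G$ and not merely an upper bound. This is exactly where the assumption $\dim(\G) = 0$ is essential: the six sections spanning $R$ cut out a subscheme of the expected codimension six in the smooth 6-fold $\PP \times \PP$, hence form a regular sequence there, and the intersection is proper. With properness in hand, the Chow-ring computation delivers the length of $\G$ as $20$, and one is done.
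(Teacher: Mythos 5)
Your proposal is correct and follows essentially the same route as the paper: the same Chow-ring computation $(h_1+h_2)^6 = 20\,h_1^3h_2^3$ in $\ZZ[h_1,h_2]/(h_1^4,h_2^4)$ for part (1), and the same appeal to \cite[Thm.~4.1]{ShV02} for part (2). The only difference is that you spell out the properness/regular-sequence justification slightly more explicitly than the paper does, which is harmless.
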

\begin{proof}
(1) 
The Chow ring of $\PP^3$ is isomorphic to $\ZZ[t]/(t^4)$ with $t$ the class of a hyperplane. The Chow ring of 
 $\PP^3\times \PP^3$ is isomorphic to $\ZZ[s,t]/(s^4,t^4)$ and the class  of the zero locus of a non-zero element in 
 $V \otimes V$ is equal to $s+t$. If $\dim(\G)=0$, then the 
 class of $\G$ is $(s+t)^6$ since $\dim(R)=6$. But $(s+t)^6 = 20s^3t^3$ so the cardinality of $\G$ is 20 when its points
 are counted with multiplicity. 
 
 (2)
 This is \cite[Thm. 4.1]{ShV02}. 
\end{proof}

 \subsection{}
 We now explain our strategy for computing $\G$ for $A(\a,\b,\c)$.

Let $\bfx$ denote the row vector $(x_0,x_1,x_2,x_3)$ over
$\Bbbk\langle x_0,x_1,x_2,x_3\rangle$ and let $\bfx^\sT$ denote its
transpose.

The relations defining $A(\a,\b,\c)$ can be written as a single matrix
equation, $M\bfx^\sT=0$, over $\Bbbk\langle x_0,x_1,x_2,x_3\rangle$
where
 \begin{equation}
 \label{6-by-4.M}
 M \; :=\; 
 \begin{pmatrix}
 -x_1 & x_0 & -\a x_3 & -\a x_2 \\   -x_2 & -\b x_3 & x_0 & -\b x_1 \\   -x_3 & -\c x_2 & -\c x_1 & x_0 \\   -x_3 & -x_2 & x_1 & -x_0 \\   -x_1 & -x_0 & -x_3 & x_2 \\   -x_2 & x_3 & -x_0 & -x_1
\end{pmatrix}.
\end{equation}
The relations can also be written as $\bfx M'=0$ where
 \begin{equation}
 \label{4-by-6.M'}
 M' \; :=\; 
 \begin{pmatrix}
 -x_1     & -x_2    & -x_3     & -x_3 & -x_1  & -x_2 \\   
 x_0      & \b x_3 & \c  x_2 & x_2   &  -x_0 & -x_3 \\
 \a x_3  & x_0 &   \c x_1 & -x_1 & x_3 & -x_0 \\  
  \a x_2 & \b x_1 & x_0 & -x_0 & -x_2 & x_1
\end{pmatrix}.
\end{equation}

Consider the entries in $M$ (resp., $\bfx$) as linear forms on the left-hand (resp., right-hand) factor of
$\PP^3 \times \PP^3=\PP(V^*) \times \PP(V^*)$. Then $\G$ is the scheme-theoretic zero locus of the  six 
entries in $M\bfx^\sT$ when those entries
are viewed as bi-homogeneous elements in $\Bbbk[x_0,x_1,x_2,x_3] \otimes \Bbbk[x_0,x_1,x_2,x_3]$.

Let $\pr_1:\G \to \PP^3$ and $\pr_2:\G \to \PP^3$ be the projections  $\pr_1(p,p')=p$ and  $\pr_2(p,p')=p'$.

If $p \in \PP^3$, then $p \in \pr_1(\G)$ if and only if there is a point $p' \in \PP^3$ such that 
$M(p)\bfx^\sT(p')=0$; i.e., if and only if $\rank(M(p))<4$. Thus, $\pr_1(\G)$ is the scheme-theoretic zero locus of
the $4\times 4$ minors of $M$. Similarly, $\pr_2(\G)$ is the scheme-theoretic zero locus of
the $4\times 4$ minors of $M'$.

\begin{lemma}
\label{lem.8.points}
If $\l,\mu,\nu$ are non-zero scalars, then the intersection of the three quadrics
$$
x_0x_1-\l^2 x_2x_3=0, \qquad x_0x_2-\mu^2 x_1x_3=0, \qquad x_0x_3-\nu^2 x_1x_2=0, 
$$
consists of the eight points
\begin{align*}
& (0,1,0,0), \qquad (0,0,1,0), \qquad (\l\mu\nu,\l,\mu,\nu), \qquad \phantom{xxi} (\l\mu\nu,-\l,-\mu,\nu),
\\ 
& (1,0,0,0), \qquad (0,0,0,1),   \qquad (\l\mu\nu,-\l,\mu,-\nu), \qquad (\l\mu\nu,\l,-\mu,-\nu).
\end{align*}
\end{lemma}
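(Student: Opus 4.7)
My plan is to exhibit the eight listed points as solutions and then show completeness by a case split on which coordinates vanish. The whole argument is elementary, but I would organize it to keep the sign bookkeeping under control.

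\emph{Verification.} For each of the four coordinate points $e_0,e_1,e_2,e_3$, every monomial in every defining equation vanishes, so these points lie on all three quadrics. For the four remaining points, written uniformly as $(\lambda\mu\nu,\,\epsilon_1\lambda,\,\epsilon_2\mu,\,\epsilon_3\nu)$ with $\epsilon_1\epsilon_2\epsilon_3=+1$, one computes $x_0x_1=\epsilon_1\lambda^2\mu\nu$ and $\lambda^2 x_2x_3=\epsilon_2\epsilon_3\lambda^2\mu\nu=\epsilon_1\lambda^2\mu\nu$; the two remaining relations are checked analogously.

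\emph{Completeness.} Let $(x_0,x_1,x_2,x_3)$ be any solution. I split into two cases.

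\textbf{Case 1: some $x_i$ is zero.} If $x_0=0$, then since $\lambda,\mu,\nu\neq 0$ the three equations force $x_2x_3=x_1x_3=x_1x_2=0$, so at most one of $x_1,x_2,x_3$ is non-zero and we recover $e_1,e_2,e_3$. If instead $x_1=0$ (with $x_0\neq 0$), equation (1) forces $x_2x_3=0$; chasing the remaining two equations forces two more of the coordinates to vanish, yielding $e_0$. The sub-cases $x_2=0$ and $x_3=0$ are analogous, and altogether Case~1 contributes only the four coordinate points.

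\textbf{Case 2: all $x_i\neq 0$.} Multiplying the three equations and cancelling the nonzero factor $x_1x_2x_3$ yields $x_0^3=\lambda^2\mu^2\nu^2\,x_1x_2x_3$. Feeding this back into each individual equation in turn gives
$$x_0^2=\mu^2\nu^2 x_1^2,\qquad x_0^2=\lambda^2\nu^2 x_2^2,\qquad x_0^2=\lambda^2\mu^2 x_3^2.$$
Since $\mathrm{char}\,\Bbbk\neq 2$, I may normalize $x_0=\lambda\mu\nu$ and then write $x_1=\epsilon_1\lambda$, $x_2=\epsilon_2\mu$, $x_3=\epsilon_3\nu$ for signs $\epsilon_i\in\{\pm 1\}$. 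Substituting back into any one of the three original equations produces the single constraint $\epsilon_1\epsilon_2\epsilon_3=+1$, and the four solutions to this constraint are exactly the four remaining points on the list.

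The eight points produced are visibly distinct (the four coordinate points each have three vanishing coordinates, whereas the other four have all coordinates non-zero, and the four sign patterns with $\epsilon_1\epsilon_2\epsilon_3=+1$ determine distinct projective points as $\lambda,\mu,\nu\neq 0$), so no multiplicities are being collapsed. I do not anticipate a serious obstacle; the only mildly delicate point is the sign bookkeeping at the end of Case~2, which relies on $\mathrm{char}\,\Bbbk\neq 2$ so that each of $x_1^2,x_2^2,x_3^2$ is determined only up to a sign once $x_0$ is fixed.
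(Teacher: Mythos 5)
Your proof is correct, but it follows a genuinely different route from the paper's. The paper first intersects only the \emph{first two} quadrics: it exhibits the four lines $x_0=x_3=0$, $x_1=x_2=0$, and $x_0\mp\l\mu x_3=x_1\mp\l\mu^{-1}x_2=0$ lying on both, invokes B\'ezout's theorem to conclude that the intersection of the two quadrics is a degree-$4$ curve and hence exactly the union of those four lines, and then meets each line with the third quadric to harvest two points per line. You instead argue by direct coordinate computation with a case split on which coordinates vanish, eliminating variables by multiplying the three equations together. Each approach has its merits: the paper's argument exposes the geometry (the four lines common to two of the quadrics, i.e. the standard picture of a pencil of quadrics through a quartic curve), while yours is entirely elementary, avoids B\'ezout and any concern about identifying irreducible components over a field that need not be algebraically closed, and makes the sign combinatorics ($\epsilon_1\epsilon_2\epsilon_3=+1$, hence exactly four sign patterns) completely transparent. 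Your bookkeeping in Case~2 is sound: from $x_0^3=\l^2\mu^2\nu^2x_1x_2x_3$ and each individual relation one does get $x_0^2=\mu^2\nu^2x_1^2$ and its analogues, and after normalizing $x_0=\l\mu\nu$ each of the three original equations reduces to the same constraint $\epsilon_1\epsilon_2\epsilon_3=+1$. One small remark: the hypothesis $\mathrm{char}\,\Bbbk\neq 2$ (which the paper imposes globally) is needed exactly where you say it is, namely to guarantee that the eight points are distinct; the deduction $x_1^2=\l^2\Rightarrow x_1=\pm\l$ itself is valid in any field.
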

\begin{proof}
The line $x_0-\l\mu x_3=x_1-\l\mu^{-1}x_2=0$ lies on the quadric $x_0x_1-\l^2 x_2x_3=0$ because
$$
x_0x_1-\l^2 x_2x_3 \; = \; (x_0-\l\mu x_3)x_1 + (x_1-\l\mu^{-1}x_2)\l\mu x_3
$$
and on the quadric $x_0x_2-\mu^2 x_1x_3=0$ because
$$
x_0x_2-\mu^2 x_1x_3 \; = \; (x_0-\l\mu x_3)x_2 - (x_1-\l\mu^{-1}x_2)\mu^2 x_3.
$$
Continuing in this vein, the lines $x_0=x_3=0$, $x_1=x_2=0$, $x_0-\l\mu x_3=x_1-\l\mu^{-1}x_2=0$, and 
$x_0+\l\mu x_3=x_1+\l\mu^{-1}x_2=0$, lie on the quadrics $x_0x_1-\l^2 x_2x_3=0$ and $x_0x_2-\mu^2 x_1x_3=0$. 
By B\'ezout's theorem, the intersection of these two quadrics is a curve of degree 4 in $\PP^3$ so is
the union of these four lines.

The quadric $x_0x_3-\nu^2 x_1x_2=0$ meets the line $x_0=x_3=0$  at $(0,1,0,0)$ and $(0,0,1,0)$;
 the  line $x_1=x_2=0$ at $(1,0,0,0)$ and $(0,0,0,1)$; the line $x_0-\l\mu x_3=x_1-\l\mu^{-1}x_2=0$ at $(\l\mu\nu,\l,\mu,\nu)$ and $(\l\mu\nu,-\l,-\mu,\nu)$; and the line $x_0+\l\mu x_3=x_1+\l\mu^{-1}x_2=0$ at $(\l\mu\nu,-\l,\mu,-\nu)$ and $(\l\mu\nu,\l,-\mu,-\nu)$.  The proof is complete.
\end{proof}

\begin{proposition}
\label{prop.Gamma.finite.iff}
The scheme $\G$ associated to the algebra $TV/(R) = A(\a,\b,\c)$ is finite if and only if $\a\b\c\ne 0$ and $\a+\b+\c+\a\b\c \ne 0$.
%$\dim(\G) \ge 1$ if and only if $\a+\b+\c+\a\b\c = 0$.
\end{proposition}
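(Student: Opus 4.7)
The plan is to prove the two implications separately, using the matrix $M$ of \Cref{6-by-4.M} and the observation (from \Cref{prop.20.pts.chow}) that $\Gamma$ is nonempty, so ``$\Gamma$ finite'' is equivalent to $\dim \Gamma = 0$.

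\emph{The ``if'' direction.} Assume $\a\b\c \ne 0$ and $\a+\b+\c+\a\b\c \ne 0$. Since the fiber $\pr_1^{-1}(p)$ is the projectivization of $\ker M(p)$, it suffices to show that $\pr_1(\Gamma)$, the common zero locus in $\PP(V^*)$ of the $4 \times 4$ minors of $M$, is finite. Combining each pair of ``sum'' and ``difference'' relations gives, for each cyclic $(i, j, k)$,
\[
2 y_0 z_i = (\a_i + 1) y_j z_k + (\a_i - 1) y_k z_j, \qquad 2 y_i z_0 = (1 - \a_i) y_j z_k - (\a_i + 1) y_k z_j,
\]
whose product simplifies to
\[
4 y_0 y_i z_0 z_i = (1 - \a_i^2)(y_j^2 z_k^2 + y_k^2 z_j^2) - 2(1 + \a_i^2) y_j y_k z_j z_k.
\]
Analogous bilinear and quadric-intersection manipulations, combined with \Cref{lem.8.points} applied to intersections of quadrics of the shape appearing above, will cut $\PP(V^*)$ down to a finite set. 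The main obstacle lies here: one must verify by explicit computation of $4 \times 4$ minors of $M$ that the full system of six bi-forms has zero-dimensional zero locus, with the hypothesis $\a+\b+\c+\a\b\c \ne 0$ entering as the non-vanishing of a discriminant-type polynomial that prevents the rank of $M$ from dropping below $3$ on a positive-dimensional subset. Once $\pr_1(\Gamma)$ is known to be finite, \Cref{prop.20.pts.chow} caps $|\Gamma|$ at $20$.

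\emph{The ``only if'' direction.} In each excluded case I would exhibit a positive-dimensional subvariety of $\Gamma$. Restrict the six bi-forms to the diagonal $\Delta = \{z = y\} \subset \PP(V^*) \times \PP(V^*)$: the bi-form $[x_0, x_i]$ vanishes identically and $\{x_0, x_i\}$ becomes $2 y_0 y_i$ (and likewise for $i, j, k$), so the system reduces to $y_0 y_i = 0$ and $\a_i y_j y_k = 0$ for each cyclic $(i, j, k)$. If $\a = 0$, the projective line $\{(0 : 0 : s : t)\} \cong \PP^1$ satisfies all six equations on $\Delta$ and therefore lies in $\Gamma$; the cases $\b = 0$ and $\c = 0$ are symmetric.

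If $\a+\b+\c+\a\b\c = 0$ with $\a\b\c \ne 0$, then $A(\a,\b,\c)$ is a $4$-dimensional Sklyanin algebra. For the non-degenerate subcase, it is classical (see \cite{SS92}) that $\Gamma$ is the graph of the translation automorphism $\tau \colon E \to E$ of the associated quartic elliptic curve $E \subseteq \PP^3$, hence $1$-dimensional. For the degenerate Sklyanin cases ($\{\a,\b,\c\} \cap \{\pm 1\} \ne \varnothing$, with $\a\b\c \ne 0$) one verifies directly that $\Gamma$ still supports a $1$-parameter family of points: for instance, when $(\a,\b,\c) = (1, -1, 1)$, the family $\{((1 : i : t : it),\, (it : t : -i : 1)) : t \in \Bbbk\}$ lies in $\Gamma$, completing the argument.
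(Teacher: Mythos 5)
The ``if'' direction of your argument is not a proof: you explicitly defer the decisive step (``The main obstacle lies here: one must verify by explicit computation \dots''), and that deferred step is precisely where all the content of the proposition sits. The paper closes this gap by computing all fifteen $4\times 4$ minors $h_{ij}$ of $M$ and observing that, up to nonzero scalars, twelve of them factor as a bilinear quadric of the shape $x_0x_m - \lambda x_nx_p$ times one of the four diagonal quadrics $q=\sum x_i^2$, $q_1=x_0^2-\b\c x_1^2-\c x_2^2+\b x_3^2$, $q_2$, $q_3$, while the remaining three are explicit combinations of such products. Consequently any irreducible component $C$ of $\pr_1(\G)$ either lies in the common zero locus of three bilinear quadrics --- finite by \Cref{lem.8.points}, which is where $\a\b\c\ne 0$ is used --- or has all four of $q,q_1,q_2,q_3$ vanishing on it. The hypothesis $\a+\b+\c+\a\b\c\ne 0$ then enters not as some unidentified ``discriminant-type polynomial'' but concretely: the determinant of the coefficient matrix of $q,q_1,q_2,q_3$ with respect to $x_0^2,\dots,x_3^2$ equals $-(\a+\b+\c+\a\b\c)^2$, so the four quadrics are linearly independent and their common zero locus is empty, forcing $C$ to be finite. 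Your bilinear identity $4y_0y_iz_0z_i=\cdots$ does not visibly produce either of these two ingredients, so as written the forward implication is unproved.

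The ``only if'' direction is in better shape --- the diagonal argument for $\a\b\c=0$ is correct and the appeal to \cite{SS92} legitimately covers the non-degenerate Sklyanin case --- but the degenerate Sklyanin case with $\a\b\c\ne 0$ is a one-parameter family (up to the isomorphisms of \Cref{lem.isom}, the algebras $A(1,-1,\c)$ for arbitrary $\c\ne 0$), and you exhibit a curve in $\G$ only at the single point $(1,-1,1)$ (your family does check out there, but ``one verifies directly'' is not a proof for the rest of the family). The paper avoids this case split entirely: when $\a+\b+\c+\a\b\c=0$ the four quadrics above become linearly dependent with ${\rm span}\{q,q_1\}={\rm span}\{q,q_2\}={\rm span}\{q,q_3\}$, so every $h_{ij}$ vanishes on the curve $\{q=q_1=0\}$ and $\pr_1(\G)$ is infinite, for degenerate and non-degenerate Sklyanin parameters alike.
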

\begin{proof}
Before starting the proof we introduce some notation.

We label the following four polynomials in the symmetric algebra $SV$:
\begin{align*}
q & \; := \; x_0^2+ x_1^2+ x_2^2 + x_3^2, 
\\
q_1 & \;:= \; x_0^2-\b\c x_1^2-\c x_2^2 +\b x_3^2,  
\\
q_2 & \;:= \;  x_0^2+\c x_1^2-\a\c x_2^2 -\a x_3^2,  \qquad \text{and}
\\
q_3 & \;:= \; x_0^2-\b x_1^2+\a x_2^2 -\a\b x_3^2.
\end{align*}

We write $h_{ij}$ for the $4\times 4$  minor of $M$ obtained by deleting rows $i$ and $j$. 
Up to non-zero scalar multiples, 
\begin{align*}    
 h_{23} &  =  (x_0x_1-\a x_2x_3)q, \;   \; h_{46}   =   (x_0x_1+\a x_2x_3)q_1,    \;\;    h_{24}   =  (x_0x_1-x_2x_3)q_2,  \quad \phantom{x} h_{36}   =  (x_0x_1+x_2x_3)q_3,  
  \\       
   h_{13} &  =  (x_0x_2 -\b x_1x_3)q, \; \;  h_{14}   =  (x_0x_2+ x_1x_3)q_1,    \; \; \phantom{\a}  h_{45}   =  (x_0x_2+\b x_1x_3)q_2,     \quad    h_{35}   =  (x_0x_2-x_1x_3)q_3,    
  \\   
 h_{12} &  =  (x_0x_3-\c x_1x_2)q, \; \; h_{16}   =  (x_0x_3-x_1x_2)q_1,   \; \;  \phantom{\a}     h_{25}   =  (x_0x_3+x_1x_2)q_2, \quad  \phantom{x.} h_{56}   =  (x_0x_3+\c x_1x_2)q_3,    
  \\       
 h_{34} & \, = \,      
% \a\b x_1^2 x_3^2  + \a\b x_2^2 x_3^2  - \b x_0^2 x_1^2  + \a x_0^2 x_2^2  - \b x_1^2 x_3^2  + \a x_2^2 x_3^2  - x_0^2 x_1^2  - x_0^2 x_2^2 ,      
 (\a\b x_3^2-x_0^2)(x_1^2+x_2^2) + (\a x_2^2-\b x_1^2)(x_0^2+x_3^2)
 \\
& \, = \,     (\a\b x_3^2-x_0^2)q+(x_0^2+x_3^2)q_3 ,
 \\       
 h_{26} & \, = \,      
% \a\c x_1^2 x_2^2  + \a\c x_2^2 x_3^2  + \c x_0^2 x_1^2  + \c x_1^2 x_2^2  - \a x_0^2 x_3^2  - \a x_2^2 x_3^2  - x_0^2 x_1^2  - x_0^2 x_3^2 ,   
  (\a\c x_2^2-x_0^2)(x_1^2+x_3^2) + (\c x_1^2-\a x_3^2)(x_0^2+x_2^2)
   \\
& \, = \,     (\a\c x_2^2-x_0^2)q+(x_0^2+x_2^2)q_2,   
   \\       
 h_{15} & \, = \,      
 %\b \c x_1^2 x_2^2  + \b \c x_1^2 x_3^2  - \c x_0^2 x_2^2  - \c x_1^2 x_2^2  + \b x_0^2 x_3^2  + \b x_1^2 x_3^2  - x_0^2 x_2^2  - x_0^2 x_3^2 ,     
  (\b\c x_1^2-x_0^2)(x_2^2+x_3^2) + ( \b x_3^2-\c x_2^2)(x_0^2+x_1^2)
     \\
& \, = \,     (\b\c x_1^2-x_0^2)q+(x_0^2+x_1^2)q_1.
 \end{align*}
 These are the ``same'' expressions as those in the proof of  \cite[Prop. 2.4]{SS92}.\footnote{The phrase ``making frequent use of (0.2.1)'' in  the second sentence in the proof of  \cite[Prop. 2.4]{SS92}
should be deleted in order to make that sentence true.}

We write  $g_{ij}$ for the $4\times 4$  minor of $M'$ obtained by deleting columns $i$ and $j$. 

If $p =(\l_0,\l_1,\l_2,\l_3) \in \PP^3$ we write $\ominus p$ for the point $(-\l_0,\l_1,\l_2,\l_3)$ and $M'(\ominus p)$ for the matrix $M'$ evaluated at $\ominus p$. The matrices $M'(\ominus p)$ and $-M(p)^\sT$ are almost the same: the only difference is that the top row of $M'(\ominus p)$ is the negative of the top row of $-M(p)^\sT$. This observation makes it easy to compute the 
 $4\times 4$  minors of $M'$ from the $4 \times 4$ minors of $M$. Doing that, 
up to non-zero scalar multiples we obtain
\begin{align*}    
 g_{23} &  =  (x_0x_1+\a x_2x_3)q, \;   \; g_{46}   =   (x_0x_1-\a x_2x_3)q_1,    \;\;    g_{24}   =  (x_0x_1+x_2x_3)q_2,  \quad \phantom{x} g_{36}   =  (x_0x_1-x_2x_3)q_3,  
  \\       
   g_{13} &  =  (x_0x_2 +\b x_1x_3)q, \; \;  g_{14}   =  (x_0x_2-x_1x_3)q_1,    \; \; \phantom{\a}  g_{45}   =  (x_0x_2-\b x_1x_3)q_2,     \quad    g_{35}   =  (x_0x_2+x_1x_3)q_3,    
  \\   
 g_{12} &  =  (x_0x_3+\c x_1x_2)q, \; \; g_{16}   =  (x_0x_3+x_1x_2)q_1,   \; \;  \phantom{\a}     g_{25}   =  (x_0x_3-x_1x_2)q_2, \quad  \phantom{x.} g_{56}   =  (x_0x_3-\c x_1x_2)q_3,    
  \\       
 g_{34} & \, = \,      
% \a\b x_1^2 x_3^2  + \a\b x_2^2 x_3^2  - \b x_0^2 x_1^2  + \a x_0^2 x_2^2  - \b x_1^2 x_3^2  + \a x_2^2 x_3^2  - x_0^2 x_1^2  - x_0^2 x_2^2 ,      
 (\a\b x_3^2-x_0^2)(x_1^2+x_2^2) + (\a x_2^2-\b x_1^2)(x_0^2+x_3^2)
 \\
& \, = \,     (\a\b x_3^2-x_0^2)q+(x_0^2+x_3^2)q_3 ,
 \\       
 g_{26} & \, = \,      
% \a\c x_1^2 x_2^2  + \a\c x_2^2 x_3^2  + \c x_0^2 x_1^2  + \c x_1^2 x_2^2  - \a x_0^2 x_3^2  - \a x_2^2 x_3^2  - x_0^2 x_1^2  - x_0^2 x_3^2 ,   
  (\a\c x_2^2-x_0^2)(x_1^2+x_3^2) + (\c x_1^2-\a x_3^2)(x_0^2+x_2^2)
   \\
& \, = \,     (\a\c x_2^2-x_0^2)q+(x_0^2+x_2^2)q_2,   
   \\       
 g_{15} & \, = \,      
 %\b \c x_1^2 x_2^2  + \b \c x_1^2 x_3^2  - \c x_0^2 x_2^2  - \c x_1^2 x_2^2  + \b x_0^2 x_3^2  + \b x_1^2 x_3^2  - x_0^2 x_2^2  - x_0^2 x_3^2 ,     
  (\b\c x_1^2-x_0^2)(x_2^2+x_3^2) + ( \b x_3^2-\c x_2^2)(x_0^2+x_1^2)
     \\
& \, = \,     (\b\c x_1^2-x_0^2)q+(x_0^2+x_1^2)q_1.
 \end{align*}
In particular, up to non-zero scalar multiples, $g_{ij}(x_0,x_1,x_2,x_3)=h_{ij}(-x_0,x_1,x_2,x_3)$ for all $i$ and $j$. 
Hence $\pr_2(\G)=\ominus \pr_1(\G)$. 

It follows that $\pr_1(\G)$ is finite if and only if $\pr_2(\G)$ is finite if and only if $\G$ is finite.

($\Leftarrow$)
Suppose  $\a\b\c\ne 0$ and  $\a+\b+\c+\a\b\c \ne 0$.

Let $C$ be an irreducible component of $\pr_1(\G)$.
Since $h_{12}$, $h_{13}$, and $h_{23}$, vanish on $C$, either $q$ vanishes on $C$ or $C$ is in the zero locus of the other factors of $h_{12}$, $h_{13}$, and $h_{23}$; i.e., in the common zero locus of $x_0x_1-\a x_2 x_3$, 
$x_0x_2-\b x_1 x_3$, and $x_0x_3-\c x_1x_2$; but that common zero locus is finite by  \Cref{lem.8.points} so
either $C$ is finite or $q$ vanishes on $C$. Likewise, if $q_j \in \{q_1,q_2,q_3\}$, either $C$ is finite or $q_j$ vanishes on $C$.
Thus, either $C$ is finite or all four of $q$, $q_1$, $q_2$, and $q_3$, vanish on $C$.
However, the set $\{q,q_1,q_2,q_3\}$ is 
%$x_0^2+ x_1^2+ x_2^2 + x_3^2$, 
 %$x_0^2-\b\c x_1^2-\c x_2^2 +\b x_3^2$,  $x_0^2+\c x_1^2-\a\c x_2^2 -\a x_3^2$,  and $x_0^2-\b x_1^2+\a x_2^2 -\a\b x_3^2$,
 %$$
 %x_0^2-\b\c x_1^2-\c x_2^2 +\b x_3^2, \qquad x_0^2+\c x_1^2-\a\c x_2^2 -\a x_3^2, \qquad \text{and} \qquad x_0^2-\b x_1^2+\a x_2^2 -\a\b x_3^2,
% $$
  linearly independent because the determinant 
\begin{equation*}
\det   \begin{pmatrix}
1 & 1 & 1 & 1 \\
    1&-\b\c&-\c&\b \\ 1&\c&-\a\c&-\a \\ 1&-\b &\a &-\a\b
  \end{pmatrix}  \;=\; - (\a+\b+\c+\a\b\c)^2 
\end{equation*}
is non-zero so the common zero-locus of $q$, $q_1$, $q_2$, and $q_3$, is empty.
We conclude that $C$ is finite. 
It follows that $\pr_1(\G)$, and hence $\G$, is finite.

($\Rightarrow$)
Suppose $\G$ is finite. 

If $\a+\b+\c+\a\b\c = 0$, then ${\rm span}\{q,q_1\} = {\rm span}\{q,q_2\} = {\rm span}\{q,q_3\}$.
It follows that all $h_{ij}$ vanish on $\{q=q_1=0\}$ whence $\{q=q_1=0\} \subseteq \pr_1(\G)$. 
But this is ridiculous because $\{q=q_1=0\}$ is a curve, hence infinite, so we conclude that
 $\a+\b+\c+\a\b\c \ne 0$.

If $\a=0$, then all $h_{ij}$ vanish on the line $x_0=x_1=0$; i.e., $\{x_0=x_1=0\} \subseteq \pr_1(\G)$; this is not the case because $\G$ is finite so we conclude that $\a \ne 0$.
If $\b=0$, then  $\{x_0=x_2=0\} \subseteq \pr_1(\G)$; this is not the case so we conclude that $\b \ne 0$. 
 If $\c=0$, then  $\{x_0=x_3=0\} \subseteq \pr_1(\G)$; this is not the case so we conclude that $\c \ne 0$. 
 Thus, $\a\b\c \ne 0$.
\end{proof}

\subsection{}
\label{sect.defn.P}

Suppose $\a\b\c\ne 0$. Let $\fP\subseteq \PP^3$ denote the set of points in the following table.
\begin{table}[htp]
\begin{center}
\begin{tabular}{|l||l|l|l|l|l|}
\hline
$\quad \fP_\infty$ & $\qquad \fP_0$ & $\qquad \fP_1$ & $\qquad \fP_2$ & $\qquad \fP_3$ & 
\\
\hline
\hline
$(1,0,0,0)$ & $(abc, a, b,c)$ &  $(a,-ia,-i,-1)$ & $(b,-1,-ib,-i)$   &   $(c,-i,-1,-ic)$  & 
\\
\hline
$(0,1,0,0)$ &$(abc, a, -b,-c)$   & $(a,-ia,i,1)$  &  $(b,-1,ib,i)$  &  $(c,-i,1,ic)$  & $\c_1$
\\
\hline
 $(0,0,1,0)$ &$(abc, -a, b,-c)$   & $(a,ia,-i,1)$  &  $(b,1,-ib,i)$ &  $(c,i,-1,ic)$   & $\c_2$
  \\
\hline
 $(0,0,0,1)$ & $(abc, -a, -b,c)$ & $(a,ia,i,-1)$ & $(b,1,ib,-i)$  &  $(c,i,1,-ic)$  & $\c_3$
  \\
\hline
\end{tabular}
\end{center}
\vskip .12in
\caption{The points in $\fP$}%$\pr_1(\G)$.}
\label{table.20.pts}
\end{table}
\newline

Let $\ominus:\PP(V^*) \to \PP(V^*)$ and $\theta:\fP \to \ominus \fP$ be the maps
$\ominus(\xi_0,\xi_1,\xi_2,\xi_3)=(-\xi_0,\xi_1,\xi_2,\xi_3)$ and  
$$
\theta(p) \; := \; 
\begin{cases}
p & \text{if $p \in \fP_\infty$,}
\\
\ominus p & \text{if $p \in  \fP_0$,}
\\
\ominus  \c_i(p) & \text{if $p \in \fP_i$, $i=1,2,3$.}
\end{cases}
$$

\begin{proposition}
\label{prop.Gamma}
Suppose the subscheme $\G\subseteq \PP^3 \times \PP^3$ determined by
the relations for $A(\a,\b,\c)$ is finite.  Then $\G$ is the graph of
the bijection $\theta:\fP \to \ominus \fP$ and
consists of 20 distinct points.
\end{proposition}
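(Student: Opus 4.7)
The strategy is to exhibit twenty candidate closed points of $\G$, namely the graph $\{(p,\theta(p)) : p \in \fP\}$, and to invoke \Cref{prop.20.pts.chow}(1) to conclude that this exhausts $\G$ scheme-theoretically. Since $\G$ is finite, \Cref{prop.Gamma.finite.iff} gives $\a\b\c \ne 0$ and $\a+\b+\c+\a\b\c \ne 0$, which I use throughout.

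I would first verify that the twenty points of $\fP$ listed in \Cref{table.20.pts} are pairwise distinct. Distinctness inside each block $\fP_\infty, \fP_0, \fP_1, \fP_2, \fP_3$ is immediate from $\mathrm{char}(\Bbbk)\ne 2$ and $abc\ne 0$. A coincidence between two different blocks would force a relation such as $\b\c = \pm 1$ or $\a\b = \pm 1$ together with further coordinate identities; in each case substituting back shows the remaining constraints force $\a+\b+\c+\a\b\c = 0$, contradicting the hypothesis. The map $\theta$ is then a bijection $\fP \to \ominus\fP$ because it is a bijection block by block, each of $\ominus,\c_1,\c_2,\c_3$ being an involution.

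Next I would reduce the verification of $(p,\theta(p))\in\G$ to a small number of cases by symmetry. Because $\c_1,\c_2,\c_3\in\Aut_{\gr}(A)$ (\Cref{prop.aut.A}(3)) stabilize the relation subspace $R\subseteq V^{\otimes 2}$, the Klein-four group $K=\{1,\c_1,\c_2,\c_3\}$ acts diagonally on $\PP(V^*)\times\PP(V^*)$ and preserves $\G$. One checks directly that $\theta$ is $K$-equivariant: on $\fP_\infty$ trivially (each $e_m$ is $K$-fixed), on $\fP_0$ because $\ominus$ and each $\c_j$ commute (both are diagonal), and on $\fP_i$ with $i\in\{1,2,3\}$ because $\c_i\c_j=\c_j\c_i$. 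Each of $\fP_0,\fP_1,\fP_2,\fP_3$ is a single free $K$-orbit, while $K$ acts trivially on $\fP_\infty$. Hence it suffices to verify $(p,\theta(p))\in\G$ for one representative of each of $\fP_0,\fP_1,\fP_2,\fP_3$, together with the four points of $\fP_\infty$. For $\fP_\infty$ the verification is free: no relation in \Cref{skly.relns} contains a term $x_m\otimes x_m$, so every relation vanishes at $(e_m,e_m)$ and $(e_m,e_m)\in\G$ automatically for $m=0,1,2,3$.

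The remaining four checks are a direct substitution into the matrix identity $M\bfx^\sT=0$ of \Cref{6-by-4.M}: take the tabulated first point $p$ of $\fP_i$, form the scalar $6\times 4$ matrix $M(p)$, and verify that $\theta(p)^\sT$ lies in its kernel, using only $a^2=\a$, $b^2=\b$, $c^2=\c$, and $i^2=-1$. Once these four identities are confirmed, the twenty distinct pairs $(p,\theta(p))$ are all closed points of $\G$; since $\G$ has length $20$ by \Cref{prop.20.pts.chow}(1) and each contributes length at least one, each multiplicity is exactly one and $\G$ is the reduced scheme $\{(p,\theta(p)) : p\in\fP\}$, proving the proposition. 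The main obstacle is the inter-block distinctness check for $\fP$, where the nondegeneracy $\a+\b+\c+\a\b\c \ne 0$ plays its role beyond merely guaranteeing finiteness of $\G$; the four matrix substitutions are then mechanical.
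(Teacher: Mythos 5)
Your proposal is correct in outline and would yield a valid proof, but it finishes differently from the paper and contains one misstatement worth flagging. The paper, after verifying that the graph $\G_\theta$ of $\theta$ is contained in $\G$, proves the reverse inclusion \emph{directly}: it computes $\pr_1(\G)$ as the common zero locus of the $4\times 4$ minors $h_{ij}$ of the matrix $M$, uses the fact that $q,q_1,q_2,q_3$ have no common zero to force at least one of the factorizations $h_{ij}=(\text{quadric})\cdot q_\ell$ to localize $p$ on a triple of quadrics, and then applies \Cref{lem.8.points} to conclude $\pr_1(\G)=\fP$. You instead close the argument by citing \Cref{prop.20.pts.chow}(1): twenty distinct points sitting inside a zero-dimensional scheme of length $20$ must exhaust it reducedly. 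That is a legitimate and arguably cleaner finish (it is essentially the mechanism the paper itself deploys later in \Cref{pr.20} via \Cref{cor.flat20}), and it spares you the minor computations; what it loses is the explicit identification of $\pr_1(\G)$ as a determinantal locus, which the paper reuses elsewhere. Your $\ZZ_2\times\ZZ_2$-equivariance reduction of the sixteen substitution checks to four is also sound (each $\c_j$ preserves $R$, hence $\G$, and $\theta$ is visibly equivariant since $\ominus$ and the $\c_j$ are commuting diagonal involutions); the paper instead runs one parametrized computation per block over cyclic permutations $(i,j,k)$. Note, however, that the four remaining substitutions are where the actual content of $\G_\theta\subseteq\G$ lives, so they do need to be carried out.

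The misstatement is in your distinctness argument: you assert that a coincidence between two blocks of $\fP$ would force $\a+\b+\c+\a\b\c=0$, so that the non-degeneracy hypothesis is what rules it out. This is not so. The blocks are disjoint whenever $\a\b\c\ne 0$ and $\operatorname{char}\Bbbk\ne 2$: for instance $\fP_0$ lies on $x_0x_1-\a x_2x_3=0$ while $\fP_1$ lies on $x_0x_1+\a x_2x_3=0$, and a common point would satisfy $x_0x_1=x_2x_3=0$, impossible since every point of $\fP_0$ has all coordinates non-zero. The paper makes exactly this argument, and its Remark in \Cref{sect.remarks} states explicitly that $\fP$ has $20$ distinct points even without assuming $\a+\b+\c+\a\b\c\ne 0$. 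The hypothesis $\a+\b+\c+\a\b\c\ne 0$ enters only through the finiteness of $\G$ (\Cref{prop.Gamma.finite.iff}), which in your approach is what licenses the appeal to \Cref{prop.20.pts.chow}. The error does not break your proof, since the conclusion you need (distinctness) is true, but the mechanism you describe for establishing it would not materialize if you tried to write it out.
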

\begin{proof}
Since $\Gamma$ is finite both $\a\b\c$ and $\a+\b+\c+\a\b\c$ are non-zero. 
Since $\a\b\c \ne 0$, each column of \Cref{table.20.pts} consists of four distinct points.
It is easy to see that 
$$
\fP_\infty \cap\, (\fP_0  \cup\, \fP_1  \cup\, \fP_2  \cup \, \fP_3 ) \; = \;  \fP_1 \cap \, \fP_2  \; = \;  \fP_2 \cap \, \fP_3  \; = \;  \fP_3\cap\, \fP_1   \; = \; \varnothing.
$$
If $(a,\xi_1,\xi_2,\xi_3) \in \fP_0$, then $\xi_1=a\xi_2\xi_3$; if $(a,\xi_1,\xi_2,\xi_3) \in \fP_1$, then $\xi_1=-a\xi_2\xi_3$; hence $\fP_0\cap\, \fP_1 = \varnothing$. Similarly, if $(b,\xi_1,\xi_2,\xi_3) \in \fP_0$, then $\xi_2=b\xi_1\xi_3$ whereas if $(b,\xi_1,\xi_2,\xi_3) \in \fP_2$, 
then $\xi_2=-b\xi_1\xi_3$ so  $\fP_0\cap\, \fP_2 = \varnothing$. The same sort of argument shows that $\fP_0 \cap \, \fP_3 = \varnothing$. 
Thus, $\fP$ is the disjoint union of five sets each of which consists of four distinct points. Hence $\fP$ consists of 20 distinct points.

Let $\G_\theta$ denote the graph of $\theta:\fP \to \ominus \fP$. 

To complete the proof we must show that the vanishing locus in $\PP \times \PP$ of the polynomials
  \begin{equation}
 \label{6.polyns}
  \begin{cases}
  x_0\otimes x_1-x_1\otimes x_0   \, - \, \a(x_2\otimes x_3+x_3\otimes x_2) &  \qquad  x_0\otimes x_1+x_1\otimes x_0   \, - \, x_2\otimes x_3  \, + \, x_3\otimes x_2  \\
   x_0\otimes x_2-x_2\otimes x_0   \, - \, \b(x_3\otimes x_1+x_1\otimes x_3) & \qquad  x_0\otimes x_2+x_2\otimes x_0   \, - \, x_3\otimes x_1  \, + \, x_1\otimes x_3  \\
    x_0\otimes x_3-x_3\otimes x_0   \, - \, \c(x_1\otimes x_2+x_2\otimes x_1)  & \qquad  x_0\otimes x_3+x_3\otimes x_0   \, - \, x_1\otimes x_2  \, + \, x_2\otimes x_1
  \end{cases}
  \end{equation}
is exactly $\G_\theta$. 

Clearly, if $p \in \fP_\infty$, then all six polynomials vanish at $(p,p)=(p,\theta(p))$.

Suppose $p \in \fP_0$. Let $(i,j,k)$ be a cyclic permutation of $(1,2,3)$.
Since $(p,\theta(p))=(p,\ominus p)$, $x_0 \otimes x_i+ x_i \otimes x_0$ and $x_j \otimes x_k - x_k \otimes x_j$ vanish at 
$(p,\theta(p))$; the three polynomials in the second column of \Cref{6.polyns} therefore vanish at $(p,\theta(p))$. 
On the other hand,    $x_0\otimes x_i-x_i\otimes x_0   \, - \, \a_i(x_j\otimes x_k+x_k\otimes x_j) $ vanishes at 
$(p,\theta(p))$  if and  only if $2x_0\otimes x_i  \, - \, 2\a_ix_j\otimes x_k$ does. This vanishes at $(p,\ominus p)$ because
$2x_0x_i  \, - \, 2\a_ix_j x_k$ vanishes at $p$.

Let $(i,j,k)$ be a cyclic permutation of $(1,2,3)$.
Suppose $p =(\xi_0,\xi_1,\xi_2,\xi_3) \in \fP_i$. Then $\theta(p)=(\xi_0',\xi_1',\xi_2',\xi_3')$ where $\xi_i'=-\xi_i$ and $\xi_\ell'=\xi_\ell$ if $\ell \in \{0,1,2,3\}-\{i\}$. 
It follows that 
$$
\big(x_0\otimes x_i-x_i\otimes x_0   \, - \, \a_i(x_j\otimes x_k+x_k\otimes x_j)\big)\big\vert_{(p,\theta(p))}  
\;=\; - 2 \xi_0\xi_i -2\a_i\xi_j\xi_k,
$$
 $$
\big(x_0\otimes x_j +x_j\otimes x_0   \, - \,  x_k\otimes x_i + x_i\otimes x_k\big)\big\vert_{(p,\theta(p))}  
\;=\; 2 \xi_0\xi_j +2\xi_k\xi_i,
$$
$$
\big(x_0\otimes x_k +x_k\otimes x_0   \, - \,  x_i\otimes x_j + x_j\otimes x_i\big)\big\vert_{(p,\theta(p))}  
\;=\; 2 \xi_0\xi_k -2\xi_i\xi_j.
$$
A case-by-case inspection shows that these three expressions are zero;
thus, three of the polynomials in \Cref{6.polyns} vanish at
$(p,\theta(p))$. The other three polynomials in \Cref{6.polyns} also
vanish at $(p,\theta(p))$ because
$$
x_0 \otimes x_i + x_i \otimes x_0,  \quad  x_0 \otimes x_j - x_j \otimes x_0, \quad x_0 \otimes x_k - x_k \otimes x_0,  \phantom{andx}
$$
$$
x_i \otimes x_j + x_j \otimes x_i, \quad x_k \otimes x_i + x_i \otimes x_k,  \quad 
\text{and} \quad
x_j \otimes x_k - x_k \otimes x_j, 
$$
 vanish at $(p,\theta(p))$.    
   
 We have shown that the polynomials in \Cref{6.polyns} vanish on
 $\Gamma_\theta$. This completes the proof that
 $\G_\theta \subseteq \G$. In particular, $\fP \subseteq \pr_1(\G)$.
 To complete the proof of the proposition we must show the polynomials
 in \Cref{6.polyns} do not vanish outside $\G_\theta$ or,
 equivalently, that $\pr_1(\G)=\fP$.

With this goal in mind let $p \in \pr_1(\G)$. We observed in the proof of \Cref{prop.Gamma.finite.iff}, that 
$$
\{q=q_1=q_2=q_3=0\}  \; = \; \varnothing.
$$
If $q$ does not vanish at $p$, then  \Cref{lem.8.points} implies that $p \in \fP_\infty \cup \fP_0$.
Likewise, if $q_j \in \{q_1,q_2,q_3\}$ and does not vanish at $p$, then  \Cref{lem.8.points} 
tells us that $p \in \fP_\infty \cup \fP_j$. We conclude that $p \in \fP$. 
\end{proof}

\begin{corollary}
  If $\a\b\c \ne 0$ and $\a+\b+\c+\a\b\c\ne 0$, then $A(\a,\b,\c)$ is
  isomorphic to $TV/(R)$ where $R \subseteq V^{\otimes 2}$ is the
  subspace consisting of those $(1,1)$ forms that vanish on the graph
  of the bijection $\theta:\fP\to \ominus \fP$.
\end{corollary}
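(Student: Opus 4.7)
The plan is to assemble the corollary from three results already in place: \Cref{prop.Gamma.finite.iff}, \Cref{prop.Gamma}, and part (2) of \Cref{prop.20.pts.chow}. By construction, $A(\a,\b,\c) = TV/(R_0)$ where $R_0 \subseteq V^{\otimes 2}$ is the 6-dimensional space spanned by the relations in \Cref{Aabc}. Write $R$ for the subspace of $V^{\otimes 2}$ consisting of bi-degree $(1,1)$ forms vanishing on $\G_\theta$, the graph of $\theta:\fP \to \ominus\fP$. The goal is to show $R = R_0$.

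First I would invoke \Cref{prop.Gamma.finite.iff}: under the hypotheses $\a\b\c \ne 0$ and $\a+\b+\c+\a\b\c \ne 0$, the scheme $\G \subseteq \PP(V^*)\times\PP(V^*)$ cut out by $R_0$ is finite, so in particular $\dim(\G)=0$. Next I would apply \Cref{prop.Gamma} to identify $\G$ set-theoretically with $\G_\theta$; the proposition in fact shows that $\G$ consists of $20$ \emph{distinct} points, so $\G = \G_\theta$ as schemes (no multiplicities), since the Chow-theoretic count in \Cref{prop.20.pts.chow}(1) already saturates the total degree $20$.

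With $\dim(\G) = 0$ established, \Cref{prop.20.pts.chow}(2) applies: the subspace of $V \otimes V$ that vanishes on $\G$ coincides with $R_0$. But vanishing on $\G = \G_\theta$ is exactly the defining condition for $R$, so $R = R_0$. Therefore
\[
A(\a,\b,\c) \;=\; TV/(R_0) \;=\; TV/(R),
\]
which is the claim.

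There is no real obstacle here beyond correctly chaining the cited statements; the only point that deserves a word of care is confirming that when $\G$ is compared to $\G_\theta$ one does have equality as schemes rather than merely as sets. This is automatic because \Cref{prop.20.pts.chow}(1) forces $\G$ to have total intersection degree $20$, and \Cref{prop.Gamma} exhibits $20$ distinct reduced points of $\G_\theta$ inside $\G$, leaving no room for embedded or multiple structure.
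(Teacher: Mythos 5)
Your proposal is correct and is essentially the argument the paper intends: the corollary is stated without proof as the immediate combination of \Cref{prop.Gamma.finite.iff}, \Cref{prop.Gamma}, and \Cref{prop.20.pts.chow}(2), exactly as you chain them. Your extra remark that the $20$ distinct points of \Cref{prop.Gamma} saturate the length-$20$ bound of \Cref{prop.20.pts.chow}(1), forcing $\G$ to be reduced so that vanishing on $\G_\theta$ and vanishing on the scheme $\G$ impose the same condition, is a legitimate and worthwhile point of care that the paper leaves implicit.
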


The deeper meaning of the data $(\fP,\theta)$ eludes us.

\subsection{Remarks}
\label{sect.remarks}
In \Cref{sect.remarks}  we  assume that $\a\b\c \ne 0$ but do not make any assumption on 
$\a+\b+\c+\a\b\c$.

\subsubsection{}
Let $\psi_1,\psi_2,\psi_3 \in \GL(V)$ be the maps defined in \Cref{ssect.psi_j}.
Let $\c_1,\c_2,\c_3 \in \GL(V)$ be the maps defined in \Cref{sect.gamma_js}.

Let $x_0^*,x_1^*,x_2^*,x_3^*$ be the dual basis to $x_0,x_1,x_2,x_3$. 
The contragredient action of the maps $\psi_j$ acting on $V^*$ is given by \Cref{contragred.autom}.
The subgroup of $\GL(V^*)$ generated by $\psi_1,\psi_2,\psi_3$ is isomorphic to $H_4$.
The center of $H_4$ acts trivially on $\PP(V^*)$  so we obtain an action of $\ZZ_4 \times \ZZ_4$ on $\PP(V^*)$.

It is easy to see that $\psi_j(\fP)=\fP$ for all $j$. We note that 
$$
\psi_j(abc,a,b,c) \; = \; \begin{cases}
(a,-ia,i,1) & \text{if $j=1$,} \\
(b,1,-ib,i)  & \text{if $j=2$,} \\
(c,i,1,-ic)  & \text{if $j=3$.}
\end{cases}
$$ 
It follows rather easily that $\fP_0 \cup \fP_1 \cup \fP_2 \cup \fP_3$ is a single orbit under the action of $H_4$ and 
therefore a single  $\ZZ_4 \times \ZZ_4$-orbit.  
 The subgroup $\{\id,\c_1,\c_2,\c_3\}$ of $\ZZ_4 \times \ZZ_4$ is an essential subgroup and, as is easy to see,  
 none of $\c_1,\c_2,\c_3$ fixes any point in $\fP_0 \cup \fP_1 \cup \fP_2 \cup \fP_3$ so the homomorphism 
 $$
\ZZ_4 \times \ZZ_4 \; \longrightarrow \; \{\text{permutations of } \fP_0 \cup \fP_1 \cup \fP_2 \cup \fP_3\}
$$
is injective. It follows that $\fP_0 \cup \fP_1 \cup \fP_2 \cup \fP_3$ consists of 16 distinct points. Hence $\fP$ consists of 20
distinct points (even without the hypothesis $\a+\b+\c+\a\b\c \ne 0$).

 \begin{table}[htp]
\begin{center}
\begin{tabular}{|c|c|c|c|c|c|c|c|}
\hline
 & $x_0^*$ &  $x^*_1$ &  $x^*_2$ &  $x^*_3$  
\\
\hline
$\psi_1 $  & $ix^*_1$ &   $(bc)^{-1}x^*_0$ &  $-c^{-1} x^*_3$ &  $ib^{-1} x^*_2$ $\phantom{\Big)}$
\\
\hline
$\psi_2$  & $i x^*_2$ &   $ic^{-1}x^*_3$ &  $(ac)^{-1}x^*_0$ &  $-a^{-1} x^*_1$ $\phantom{\Big)}$
\\
\hline
$\psi_3$  & $i x^*_3$ &   $-ib^{-1}x^*_2$ &  $ia^{-1}x^*_1$ &  $(ab)^{-1}  x^*_0$ $\phantom{\Big)}$
\\
\hline
\end{tabular}
\end{center} 
\caption{Contragredient action of $H_4$ on $V^*$.}
\label{contragred.autom}
\end{table}
   
\subsubsection{}
The points in $\fP_\infty$ are fixed by the action of $\ZZ_2 \times \ZZ_2$ given by \Cref{Gamma.action}.

\subsubsection{}
If $i\ne \infty$ and $p$ is the topmost point in the column $\fP_i$, then  the other points in that column are 
$\c_1(p)$, $\c_2(p)$, and $\c_3(p)$, in that order from top to bottom. Thus, when $j \ne \infty$, $\fP_j$ is a single $\ZZ_2 \times \ZZ_2$-orbit.

\subsection{The scheme $\G$ for the 4-dimensional Sklyanin algebras}
We review the Sklyanin algebra case (see \cite{SS92} and \cite{LS93}). 
Let $A= A(\a,\b,\c)$ be a non-degenerate Sklyanin algebra. 

Then $\G$ is the graph of an automorphism of 
$E \cup\{e_0,e_1,e_2,e_3\}$ where $E \subseteq \PP^3$  is the quartic elliptic curve  cut out by the equations
 \begin{equation}
 \label{eqns.for.E}
 \begin{cases}
  x_0^2+x_1^2+x_2^2+x_3^2 \; = \; 0,     
  \\
  x_0^2 - \b\c x_1^2 - \c x_2^2+\b x_3^2  \; = \; 0,
\\
x_0^2+\c x_1^2-\a\c x_2^2-\a x_3^2  \; = \; 0,
\\
x_0^2-\b x_1^2+\a x_2^2-\a\b x_3^2  \; = \; 0,
\end{cases}
\end{equation}
and $e_i$ is the vanishing locus of $\{x_0,x_1,x_2,x_3\}-\{x_i\}$. The
points $e_i$ are the vertices of the four singular quadrics that
contain $E$. The automorphism fixes each $e_i$ and its restriction to
$E$ is a translation automorphism.  Furthermore,
$R=\{f \in V \otimes V \; | \; f|_{\G}=0\}$. Thus, $R$ and $\G$
determine each other.
%Almost all results about the 4-dimensional Sklyanin algebras rely on this geometric definition; i.e., one first defines $E$ and a translation automorphism of it, then embeds $E$ in $\PP_3$ via a degree-four line bundle on $E$, all of which determines $\G$, hence $R$, and hence the algebra $A$. If the restriction of 
%the automorphism to $E$ is translation by $\eta$ one often writes $A(E,\eta)$ for $A$.

%The 4-dimensional Sklyanin algebras admit a similar description and  given that description algebraic-geometric methods  play 
%an essential part in proving  they have excellent homological properties and have Hilbert series $(1-t)^{-4}$.  It is reasonable to
%expect that such an approach should also help us understand $A(\a_1,\a_2,\a_3)$ when $\a+\b+\c+\a\b\c \ne 0$.

%Let $\PP(V^*)$ be the projective space whose points are the lines through the origin in $V$. 
%We can view $V \otimes V$ as bi-homogeneous forms of bi-degree $(1,1)$ on $\PP(V^*) \times \PP(V^*)$.
%Note that $\PP(V^*) \times \PP(V^*)$ is isomorphic to 
%$\PP^3 \times \PP^3$. Let $\G\subseteq \PP(V^*) \times \PP(V^*)$ denote the scheme-theoretic zero locus of $R$, i.e.,
%$$
%\G = \{(p,p') \in \PP^3 \times \PP^3 \; | \; r(p,p')=0 \, \hbox{ for all $r \in R$}\}.
%$$

We fix a point $o \in E \cap \{x_0=0\}$ and impose a group law on $E$ such that $o$ is the identity and four points on $E$ are collinear if and only if their sum is $o$. The 2-torsion subgroup $E[2]$ is $E \cap \{x_0=0\}$.
We write $\oplus$ for the group law and $\ominus$ for subtraction, i.e., $p \oplus q=r$ if and only if $p=r \ominus q$.

If $p=(\xi_0,\xi_1,\xi_2,\xi_3) \in E$, then  $\ominus p=(-\xi_0,\xi_1,\xi_2,\xi_3)$.

See \cite[\S7]{CS15} for a longer explanation that uses the same notation as here.

\begin{proposition}
%\label{prop.Gamma}
 If $A$ is a non-degenerate 4-dimensional Sklyanin algebra, then 
  \begin{enumerate}
  \item $\fP_0 \cup \fP_1 \cup \fP_2 \cup \fP_3\subseteq E$ where $E$
    is the elliptic curve given by the equations in \Cref{eqns.for.E};
  \item 
$\fP_0=\tau' \oplus E[2]$ where $\tau'=(abc,a,b,c)$;
  \item 
 $\fP_i=\ve_i \oplus \tau' \oplus E[2]$ and $E[2]=\{o,2\ve_1,2\ve_2,2\ve_3\}$ for suitable $\ve_1,\ve_2,\ve_3 \in E[4]$.
 % \item 
\end{enumerate} 
\end{proposition}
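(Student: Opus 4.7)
The plan is to exploit the classical fact that, in the non-degenerate Sklyanin case, the Heisenberg automorphisms of \Cref{prop.aut.A} descend to automorphisms of $\PP(V^*)$ that preserve $E$ and act there as translations by elements of $E[4]$. Granting this, all three assertions reduce to short orbit computations with the points of \Cref{table.20.pts}.

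For (1), I first observe that $\c_1,\c_2,\c_3$ preserve $E$: each of the four quadrics in \Cref{eqns.for.E} is a linear combination of the $x_\ell^2$, hence invariant under the sign changes in \Cref{Gamma.action}. Since every column $\fP_j$ of \Cref{table.20.pts} is closed under $\{\id,\c_1,\c_2,\c_3\}$, it is enough to place the topmost entry of each column on $E$. For $\tau'=(abc,a,b,c)$ a direct substitution gives $q(\tau')=\a+\b+\c+\a\b\c$ (vanishing by the Sklyanin relation) while $q_1(\tau')=q_2(\tau')=q_3(\tau')=0$ identically; the analogous three-line check for the tops $(a,-ia,-i,-1)$, $(b,-1,-ib,-i)$, $(c,-i,-1,-ic)$ of $\fP_1,\fP_2,\fP_3$ finishes (1).

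For (2), $\fP_0$ is by construction the $\{\id,\c_1,\c_2,\c_3\}$-orbit of $\tau'$. Each $\c_i$ restricts on $E$ to a fixed-point-free involution: its fixed locus in $\PP(V^*)$ is a union of two coordinate lines, both of which are easily seen, using \Cref{eqns.for.E}, to be disjoint from $E$ in the non-degenerate case, and a fixed-point-free involution of an elliptic curve is translation by a nonzero $2$-torsion point. Therefore $\fP_0=\tau'\oplus E[2]$. For (3), I choose $\ve_i$ so that $\psi_i$ acts on $E$ as translation by $\ve_i$; such a $\ve_i$ exists because \Cref{prop.aut.A} gives $\psi_i^2=-ia_ja_k\c_i$, which acts on $\PP(V^*)$ as $\c_i$ and hence on $E$ as a nonzero translation, and an automorphism of an elliptic curve whose square is a nonzero translation is itself a translation. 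From $2\ve_i$ equaling the $2$-torsion point corresponding to $\c_i$, one deduces $\ve_i\in E[4]$. The identity $\psi_i(\tau')\in\fP_i$ recorded in \Cref{sect.remarks} exhibits $\fP_i$ as the $\langle\c_1,\c_2,\c_3\rangle$-orbit of $\psi_i(\tau')=\ve_i\oplus\tau'$, whence $\fP_i=\ve_i\oplus\tau'\oplus E[2]$. Finally, the three points $2\ve_1,2\ve_2,2\ve_3$ are distinct and nonzero, because the $\c_i$ are pairwise distinct involutions of $E$, so $E[2]=\{o,2\ve_1,2\ve_2,2\ve_3\}$.

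The main obstacle is establishing that the Heisenberg action on $E$ is by translations, rather than by some more general elliptic automorphism; this is the classical identification of $H_4$ with a subgroup of the translations by $E[4]$ for $4$-dimensional Sklyanin algebras, and without it parts (2) and (3) are not even meaningfully formulated. Once that is in hand, the remainder of the proof is essentially bookkeeping with \Cref{table.20.pts}.
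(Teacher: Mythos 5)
The paper states this proposition without proof (it appears as a bare assertion at the end of \Cref{sect.Gamma}, with the reader referred to \cite{SS92,LS93,CS15} for the surrounding facts), so there is no in-paper argument to compare yours against; judged on its own, your proof is correct and complete in all essentials. The verifications behind part (1) do check out: $q_1,q_2,q_3$ vanish identically at $\tau'=(abc,a,b,c)$ while $q(\tau')=\a+\b+\c+\a\b\c$, and at the top points of $\fP_1,\fP_2,\fP_3$ exactly one of the four quadrics evaluates to $\pm(\a+\b+\c+\a\b\c)$ with the rest vanishing identically, so non-degeneracy places all sixteen points on $E$. Your reductions in (2) and (3) --- fixed-point-free involution of an elliptic curve $=$ translation by nonzero $2$-torsion, and square-is-a-nonzero-translation $\Rightarrow$ translation (note $\phi^2=\id$ forces $\phi=\pm\id$ even when $j(E)\in\{0,1728\}$, and $\phi=-\id$ would make the square the trivial translation) --- are sound, and the distinctness of the three $2$-torsion points follows, as you say, from the four points of $\fP_0$ being distinct. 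The only step you assert rather than verify is that each $\psi_i$ actually carries $E$ to itself, which is needed before one can speak of $\psi_i|_E$; this is a one-line check, e.g.\ $\psi_1$ sends $q\mapsto -q_1$ and $q_1\mapsto \b\c\, q$, so it preserves the pencil of quadrics cutting out $E$ (equivalently: $\psi_i$ preserves $R$, hence $\pr_1(\G)=E\cup\{e_0,\dots,e_3\}$, and \Cref{contragred.autom} shows it permutes the four vertices $e_\ell$, so it preserves the one-dimensional part $E$). With that line added the argument stands as a complete proof of the proposition.
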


\section{Point schemes, graphs and flat families}\label{se.flat}

Consider the family of algebras $A(\a_1,\a_2,\a_3)$ as the parameters
$\a_i$ are allowed to vary. This section is devoted to studying the
behavior of the scheme $\G$ introduced in \Cref{sect.4.6.algs} as the
fiber of a family over the parameter space consisting of the points $(\a_1,\a_2,\a_3)$, or
in fact more generally over the family of six-dimensional relation
spaces for four-generator algebras.

\subsection{}
Throughout \Cref{se.flat}, $V$ denotes a fixed four-dimensional
space of generators for our quadratic algebras with a fixed basis
consisting of the generators $x_i$, $0\le i\le
3$, $\bG=\bG(6,V^{\otimes 2})$ is the Grassmannian of $6$-planes in
$V^{\otimes 2}$, and we regard the points of $\bG$ as spaces of
relations for four-generator-six-relator connected graded algebras, so
that $\bG$ will be the parameter space for the algebras in
question. We encode a relation space $R\in \bG$ as either a
$6\times 4$ matrix $M$ or a $4\times 6$ matrix $M'$ with entries in
$V$ analogous to \Cref{6-by-4.M} and respectively \Cref{4-by-6.M'}, so
that for $\bfx=(x_i)$ the relations $R$ read either $M\bfx^\sT=0$ or
$\bfx M'=0$.

As mentioned above, we denote by $\Gamma$ the family
$\pi:\Gamma\to \bG$ whose $R$-fiber $\Gamma_R$ for $R\in \bG$ is by
definition the subscheme of $\bP(V^*)\times\bP(V^*)$ consisting of the
pairs of points $(p,p')$ as in the discussion from
\Cref{sect.4.6.algs}, whose defining property is $M(p)\bfx^\sT(p')=0$.

We further set
\begin{equation*}
  \bX:=\pr_1(\Gamma),\quad \bX':=\pr_2(\Gamma),  
\end{equation*}
Let $U=\{R \in \bG\; | \; \dim(\bX_R)=0\}$ and define $U'$ similarly
for $\bX'$. Finally, given a family $\pi:\bS\to \bT$ and an open subset
$W\subseteq \bT$, we denote the restricted family $\pi^{-1}(W)$ by
$\bS_W$ by a slight abuse of notation.

%The title of the present section is justified by the fact that 
When
the algebra $A_R$ corresponding to $R$ is Artin-Schelter regular and has some other good properties
that we will not specify here, the scheme
$\bX_R$ is (one incarnation of) the point scheme of $A_R$ (i.e. the
scheme of classes of point modules in $\QGr(A_R)$). Moreover, in many
cases of interest $\bX_R$ equals $\bX'_R$, and $\Gamma_R$ is the graph
of an automorphism of this scheme (e.g. for non-degenerate
$4$-dimensional Sklyanin algebras \cite{SS92}, for $3$-dimensional
AS-regular algebras \cite{ATV1}, etc.). Moreover, we saw above in
\Cref{prop.Gamma} that even when $\bX_R\ne \bX'_R$, the scheme
$\Gamma_R$ is often the graph of an isomorphism.

For these reasons, we regard $\Gamma$ and its projections $\bX$
and $\bX'$ as stand-ins for the point scheme even when we lack the
requisite regularity properties for this to be literally the case.

We first prove a statement analogous to \cite[Theorem 2.6]{CSV15}.
That result says, essentially, that the line schemes of connected
graded algebras with four generators and six quadratic relations form
a flat family provided they have minimal dimension. We prove here that
the family $\Gamma\to \bG$ is similarly well behaved.

First, we have the following analogue of \cite[Proposition
2.1]{CSV15}.

\begin{proposition}\label{pr.uu}
  The subset $U,U'\subset \bG$ are open and dense. 
\end{proposition}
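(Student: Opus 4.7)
The plan is to split the statement into openness and density, tackling the first by semi-continuity of fiber dimension and the second by exhibiting a concrete witness and using irreducibility of the Grassmannian.

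For openness, view the whole family as a single closed incidence subscheme
\begin{equation*}
\Gamma \;\subseteq\; \bG \times \bP(V^*) \times \bP(V^*),
\end{equation*}
cut out by the bihomogeneous $(1,1)$ forms whose coefficients vary linearly with $R\in \bG$ (equivalently, by the six entries of $M\bfx^{\sT}$ as $M$ ranges over the tautological family of relation spaces). Taking the image of $\Gamma$ under projection away from the second $\bP(V^*)$-factor gives a closed subscheme $\bX \subseteq \bG \times \bP(V^*)$, and the composition $\bX\to \bG$ is projective with fibers $\bX_R$. Upper semi-continuity of fiber dimension for proper morphisms then ensures that $\{R\in \bG : \dim \bX_R \le 0\}$ is open in $\bG$. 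Since the Chow-class computation recalled in \Cref{prop.20.pts.chow}(1) gives $\Gamma_R$ the nonzero class $(s+t)^6 = 20\,s^3 t^3$ in the Chow ring of $\bP^3\times \bP^3$, the fiber $\Gamma_R$, and hence $\bX_R$, is nonempty for every $R$, so this open subset is precisely $U$.

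For density, recall that $\bG = \bG(6, V^{\otimes 2})$ is irreducible, so any nonempty open subset is automatically dense. It therefore suffices to exhibit a single $R \in U$. Take $R$ to be the relation space of $A(\alpha,\beta,\gamma)$ for any $(\alpha,\beta,\gamma)$ with $\alpha\beta\gamma\ne 0$ and $\alpha+\beta+\gamma+\alpha\beta\gamma\ne 0$; by \Cref{prop.Gamma.finite.iff} the associated $\Gamma_R$ is finite, so $\bX_R$ is finite and $R\in U$. The corresponding statement for $U'$ follows by the symmetric argument, using $M'$ in place of $M$ and the second projection in place of the first.

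I do not anticipate a genuine obstacle: the openness step is classical fiber-dimension semi-continuity for projective morphisms (with the Chow-theoretic nonemptiness ruling out the empty-fiber case), and density reduces to irreducibility of the Grassmannian together with the concrete non-degenerate examples supplied by \Cref{prop.Gamma.finite.iff}.
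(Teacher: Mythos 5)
Your proof is correct, and the openness half is essentially the paper's argument: both reduce to upper semi-continuity of fiber dimension together with properness of $\bX\to\bG$ (the paper spells this out by applying \cite[Exercise II.3.22(d)]{Hart} to each irreducible component of $\bX$ and then pushing the resulting closed ``bad'' locus down along the projective map $\pi$ — that push-forward step is what your compressed phrase ``semi-continuity for proper morphisms ensures openness in $\bG$'' is silently doing, so keep it in mind if you write this out in full). Where you genuinely diverge is the density half. The paper cites Van den Bergh's result \cite{VdB88} that a generic four-generator six-relator algebra has exactly twenty point modules, which directly places a dense open subset of $\bG$ inside $U$. You instead observe that $\bG$ is irreducible, so the (already established) open set $U$ is dense as soon as it is nonempty, and you produce an explicit witness from \Cref{prop.Gamma.finite.iff}. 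Your route is more self-contained — it leans only on results proved in the paper plus irreducibility of the Grassmannian — while the paper's citation of Van den Bergh buys density without needing openness first (and situates the family in the broader generic picture). Two cosmetic points: the nonemptiness of every $\Gamma_R$ is already recorded in \Cref{sect.4.6.algs} by the projective dimension theorem (six divisors in a six-fold), which is cleaner than invoking the class $(s+t)^6$ from \Cref{prop.20.pts.chow}, whose statement there carries the hypothesis $\dim\G=0$; and nonemptiness is only needed to identify $\{\dim\le 0\}$ with $\{\dim=0\}$, exactly as you say.
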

\begin{proof}
  This is entirely analogous to the proof of \cite[Proposition
  2.1]{CSV15}. We focus on the case of $U$, to fix ideas.

First, Van den Bergh's result \cite{VdB88} that, generically,
four-generator-six-relator algebras have twenty point modules ensures
that $U$ contains a dense open subset of $\bG$. Let $\bX_i$ be the
irreducible components of $\bX$, and
$\pi_i:(\bX_i)_{\mathrm{red}}\to \pi(\bX_i)_{\mathrm{red}}$ the
restriction and corestriction of $\pi$ to $(\bX_i)_{\mathrm{red}}$.

Each $\pi_i$ is projective and hence closed. By \cite[Exercise
II.3.22(d)]{Hart} applied to each $\pi_i$ individually, the complement
of $U$, being the image of the closed subset
\begin{equation*}
  \{x\in \bX\ |\ x\text{ belongs to some component of }\bX_{\pi(x)}\text{ of dimension }\ge 1\}
\end{equation*}
of $\bX$, is closed in $\bG$.
\end{proof}

In conclusion, we get

\begin{corollary}\label{cor.w}
  The locus $W\subset \bG$ over which the family $\Gamma$ has
  zero-dimensional fibers is open and dense.
\end{corollary}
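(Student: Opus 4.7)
The plan is to identify $W$ explicitly with the intersection $U\cap U'$ of the two loci appearing in \Cref{pr.uu}, after which the result follows because intersections of open dense subsets in an irreducible variety (here the Grassmannian $\bG$) remain open and dense.

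First I would verify the set-theoretic equality $W = U\cap U'$. For the inclusion $W\subseteq U\cap U'$, observe that $\bX_R$ and $\bX'_R$ are the scheme-theoretic images of $\Gamma_R$ under the two projections $\pr_1,\pr_2$; since a continuous image of a zero-dimensional scheme is zero-dimensional, $\dim\Gamma_R=0$ forces $\dim\bX_R = \dim\bX'_R = 0$. For the reverse inclusion, note that by definition $\Gamma_R$ is contained in $\bX_R \times \bX'_R$ (any point of $\Gamma_R$ projects into each of $\bX_R$ and $\bX'_R$), so if both factors are zero-dimensional, so is their product, and hence so is the closed subscheme $\Gamma_R$.

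Having identified $W = U\cap U'$, the conclusion is immediate: $U$ and $U'$ are each open and dense in $\bG$ by \Cref{pr.uu}, and $\bG=\bG(6,V^{\otimes 2})$ is an irreducible variety, so $W$ is a finite intersection of open dense subsets of an irreducible space, hence open and dense.

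No step looks particularly hard; the only thing worth double-checking is the scheme-theoretic version of the inclusion $\Gamma_R\subseteq \bX_R\times\bX'_R$, which is genuine because $\bX_R$ and $\bX'_R$ were defined as the (scheme-theoretic) projections of $\Gamma_R$, so the natural map $\Gamma_R\to \bX_R\times\bX'_R$ is a closed immersion and dimension comparison proceeds cleanly.
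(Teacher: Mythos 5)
Your proof is correct and is essentially the paper's own argument: the paper likewise identifies $W=U\cap U'$ via the observation that $\Gamma_R$ is zero-dimensional iff both projections $\bX_R$ and $\bX'_R$ are, and then invokes \Cref{pr.uu}. You simply spell out the two inclusions (correctly noting that both projections are needed, via $\Gamma_R\subseteq\bX_R\times\bX'_R$) where the paper asserts the equivalence in one line.
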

\begin{proof}
  The fiber $\Gamma_R$ is zero-dimensional if and only if its two
  projections $\bX_R$ and $\bX'_R$ are, so $W$ is simply the
  intersection $U\cap U'$. The conclusion now follows from \Cref{pr.uu}.
\end{proof}

We next turn, as hinted above, to proving certain regularity
properties for the families $\bX$, $\bX'$ and $\Gamma$ over the good
open loci of $\bG$ identified in \Cref{pr.uu,cor.w}.

\begin{lemma}\label{le.XCM}
  The schemes $\bX_U$ and $\bX'_{U'}$ are Cohen-Macaulay. 
\end{lemma}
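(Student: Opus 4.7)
The plan is to identify $\bX_U$ locally with a determinantal subscheme of the expected codimension inside a smooth ambient space, and then to invoke the Hochster--Eagon theorem on the Cohen--Macaulayness of such determinantal loci.

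First I would work on an affine open $\Omega\subseteq\bG$ over which the tautological rank-$6$ subbundle of $V^{\otimes 2}\otimes \cO_\bG$ is trivialised. This produces a $6\times 4$ matrix $M$ whose entries are linear forms on $\bP(V^*)$ with coefficients in $\cO(\Omega)$, exactly as in \Cref{6-by-4.M}. As recorded in the discussion preceding \Cref{prop.Gamma.finite.iff}, the intersection $\bX\cap(\bP(V^*)\times\Omega)$ is cut out by the $4\times 4$ minors of $M$; set-theoretically this is just the condition $\rank M(p)\le 3$.

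Next, over $U$ the fibres of $\bX\to U$ are zero-dimensional by the very definition of $U$, and $\dim U=\dim\bG$, so $\bX_U$ has dimension $\dim\bG$ and hence codimension $3$ in $\bP(V^*)\times U$. This matches the generic codimension $(6-4+1)(4-4+1)=3$ of the ideal of $4\times 4$ minors of a $6\times 4$ matrix. The ambient space $\bP(V^*)\times\Omega$ is smooth, hence Cohen--Macaulay, and the Hochster--Eagon theorem---that a determinantal ideal attaining the generic codimension inside a Cohen--Macaulay ring is perfect with Cohen--Macaulay quotient---yields that $\bX_{U\cap\Omega}$ is Cohen--Macaulay. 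Since the opens $\Omega$ cover $\bG$ and Cohen--Macaulayness is a local condition, $\bX_U$ is Cohen--Macaulay. The argument for $\bX'_{U'}$ is entirely parallel, applied to the transpose-shape matrix $M'$ of \Cref{4-by-6.M'} and the corresponding rank-$\le 3$ condition.

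The main technical point---and the place where care is needed---is the scheme-theoretic identification $\bX=V(I_4(M))$ over $U$: the set-theoretic equality is immediate from the rank description of $\pr_1(\Gamma)$, and the scheme structures agree because both arise as the classifying pullback of the universal codimension-$3$ determinantal locus in the affine space of $6\times 4$ matrices, which is integral. Once this identification is in place, the attainment of the expected codimension is forced by the very definition of $U$, and Hochster--Eagon delivers the Cohen--Macaulay property without further effort.
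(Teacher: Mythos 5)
Your proposal is correct and follows essentially the same route as the paper: locally on $U$ the scheme $\bX_U$ is cut out in the smooth (hence Cohen--Macaulay) ambient $\bP(V^*)\times U$ by the $4\times 4$ minors of the $6\times 4$ matrix $M$, the definition of $U$ forces the codimension to equal the generic value $(6-4+1)(4-4+1)=3$, and the Hochster--Eagon-type result on determinantal ideals of maximal codimension in Cohen--Macaulay rings (the paper cites \cite[$\S$18.5]{Ebud95} and \cite{BV88} for this) gives the conclusion. The extra care you take over the scheme-theoretic identification and the local trivialisation is a welcome elaboration but not a departure from the paper's argument.
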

\begin{proof}
  Once more, we focus on the case of $\bX_U$ without loss of
  generality.

Locally on $U$, the equations that define $\bX_U$ as a $U$-subscheme
of the relative projective space $\bP(V^*)_U=\bP(V^*)\times U$ are
given by the $4\times 4$ minors of a $6\times 4$ matrix. Moreover,
over $U$, $X_U$ has codimension $3$ in $\bP(V^*)_U$.

In general, the quotient by the ideal $I$ generated by the $r\times r$ minors of a $p\times q$ matrix in a Cohen-Macaulay ring is again Cohen-Macaulay, provided $I$ has maximal codimension $(p-r+1)(q-r+1)$ (see e.g. the discussion in \cite[$\S$ 18.5]{Ebud95} on determinantal rings and \cite{BV88} for a proof). In our case $p=6$, $q=4$ and $r=4$, hence the critical codimension is precisely $(6-4+1)(4-4+1)=3$. This concludes the proof. 
\end{proof}

\begin{theorem}\label{th.flat20}
  The families $\bX_U\to U$, $\bX'_{U'}\to U'$ and $\Gamma_W\to W$ are
  flat.
\end{theorem}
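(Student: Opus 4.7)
The strategy is the miracle flatness theorem (see Matsumura's \emph{Commutative Ring Theory}, Theorem 23.1): a morphism $f\colon X\to Y$ of locally Noetherian schemes is flat whenever $Y$ is regular, $X$ is Cohen--Macaulay, and
\[
\dim\mathcal{O}_{X,x}\;=\;\dim\mathcal{O}_{Y,f(x)}+\dim_x f^{-1}(f(x))
\]
for every $x\in X$. In each of our three families the base ($U$, $U'$, or $W$) is open in the Grassmannian $\bG$, so is regular, and the fibers are zero-dimensional by construction. It therefore suffices, in each case, to check that the total space is Cohen--Macaulay and that every one of its irreducible components has dimension equal to that of the base, whereupon the displayed identity reduces to the tautology $\dim\mathcal{O}_{Y,f(x)}=\dim\mathcal{O}_{Y,f(x)}$.

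For $\bX_U\to U$ and $\bX'_{U'}\to U'$, Cohen--Macaulayness is \Cref{le.XCM}. Moreover, from the proof of that lemma the scheme $\bX_U$ is cut out in $\bP(V^*)\times U$ by the $4\times 4$ minors of a $6\times 4$ matrix, and achieves the maximal expected codimension $(6-4+1)(4-4+1)=3$ (guaranteed by the zero-dimensionality of fibers over $U$). Determinantal varieties of maximal codimension are known to be equidimensional, so every irreducible component of $\bX_U$ has pure codimension $3$ in the smooth ambient $\bP(V^*)\times U$, and hence dimension $\dim U$; miracle flatness then yields flatness. The argument for $\bX'_{U'}$ is identical, with the $4\times 6$ matrix $M'$ of \Cref{4-by-6.M'} in place of $M$.

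For $\Gamma_W\to W$ we need to verify Cohen--Macaulayness separately, and we do so by exhibiting $\Gamma_W$ as a local complete intersection. Locally on $W$, trivialize the tautological subbundle $\mathcal{R}\hookrightarrow V^{\otimes 2}\otimes\mathcal{O}_{\bG}$ to obtain six bihomogeneous $(1,1)$-forms whose common scheme-theoretic vanishing in the smooth ambient $\bP(V^*)\times\bP(V^*)\times W$ of dimension $6+\dim W$ is $\Gamma_W$. Since the fibers over $W$ are zero-dimensional by the definition of $W$, we have $\dim\Gamma_W=\dim W$, and so these six equations cut out a subscheme of the expected codimension $6$. Thus $\Gamma_W$ is a local complete intersection in a smooth variety, hence Cohen--Macaulay and of pure dimension $\dim W$, and miracle flatness applies once more.

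The technical crux in each case is the ``expected-codimension'' / purity statement needed to invoke miracle flatness: for $\bX_U$ and $\bX'_{U'}$ it is the classical fact that determinantal varieties of maximal expected codimension are Cohen--Macaulay and equidimensional, while for $\Gamma_W$ it follows at once from the explicit description of $\Gamma_W$ as the zero locus of six sections combined with the zero-dimensionality of its fibers. Once purity is in hand, the hypotheses of miracle flatness are immediate.
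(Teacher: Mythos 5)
Your proof is correct, and for the families $\bX_U\to U$ and $\bX'_{U'}\to U'$ it is essentially the paper's argument: \Cref{le.XCM} for Cohen--Macaulayness, equidimensionality from the maximal-codimension determinantal description, and then miracle flatness (the paper cites \cite[Theorem 18.16(b)]{Ebud95}, which is the same statement). Where you genuinely diverge is in the treatment of $\Gamma_W\to W$. The paper deduces this case from the case of $\bX$ by observing that $\pr_1\colon\Gamma\to\bX$ is an isomorphism over $W$ --- an assertion that, while plausible (finiteness of $\Gamma_R$ forces $\rank M(p)=3$ at each $p\in\bX_R$, so the fiber of $\pr_1$ over $p$ is a single reduced point), is left unproved and requires a scheme-theoretic verification to be fully rigorous. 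You instead establish Cohen--Macaulayness of $\Gamma_W$ directly, by exhibiting it locally as the vanishing of six $(1,1)$-forms in the smooth ambient $\bP(V^*)\times\bP(V^*)\times W$ of dimension $6+\dim W$ and using zero-dimensionality of the fibers to force codimension exactly $6$, so that $\Gamma_W$ is a local complete intersection; miracle flatness then applies as before. Your route is more self-contained, avoids the unverified isomorphism claim entirely, and has the added benefit of showing $\Gamma_W$ is a local complete intersection (a stronger structural statement than Cohen--Macaulayness); the paper's route is shorter once one grants the isomorphism, and that isomorphism is independently useful elsewhere (it is reused in the proof of \Cref{cor.flat20}).
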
 
\begin{proof}
We divide the argument into two parts. 

\vspace{.5cm}

{\bf (1): $\bX$ and $\bX'$.} Symmetry allows us to once again focus on
$\bX$. Given the Cohen-Macaulay property for $\bX_U$, the proof of the
theorem mimics that of \cite[Theorem 2.6]{CSV15} verbatim.

Let $x\in \bX_U$ be a point, and set $B=\cO_{\pi(x),U}$ and
$A=\cO_{x,\bX_U}$. In order to show that $A$ is flat as a $B$-module,
denote by $\fp$ the maximal ideal of $B$. Since the fiber
$(\bX_U)_{\pi(x)}$ is of minimal dimension $0$, we have the equality
\begin{equation*}
  \dim(A)=\dim(B)+\dim(A/A\fp).
\end{equation*}
This implies the flatness of $A$ over $B$ via \cite[Theorem 18.16
(b)]{Ebud95}, given that $A$ is Cohen-Macaulay by \Cref{le.XCM} and
$B$ is regular.

\vspace{.5cm}

{\bf (2): $\Gamma$.} The result for $\Gamma_W\to W$ follows from part
(1) and the observation that over $W$ the projection
$\pr_1:\Gamma\to \bX$ is an isomorphism.
\end{proof}

Finally, as an application of the flatness results just proven, we
provide an alternate argument for the fact that the 20 points in
\Cref{table.20.pts} exhaust the ``point scheme'' of
$A(\a_1,\a_2,\a_3)$ under certain non-degeneracy conditions on the
parameters $\a_i$. We begin with

\begin{corollary}\label{cor.flat20}
  For every $R\in U$, the scheme $\bX_R$ consists of 20 points
  counted with multiplicity. Similarly for $\bX'_{U'}$ and $\Gamma_W$.
\end{corollary}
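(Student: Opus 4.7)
The plan is to deduce the point count from the flatness results just established, together with a generic count, avoiding any direct multiplicity calculation at an individual bad point.

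First, I would observe that a proper flat family of zero-dimensional schemes over a reduced connected base has locally (hence globally, by connectedness) constant fiber length. Since $\bG$ is irreducible, the open subsets $U$, $U'$, and $W$ of \Cref{pr.uu,cor.w} are connected. Thus \Cref{th.flat20} reduces each of the three statements in the corollary to the verification that \emph{some} fiber over the relevant open set has length $20$. To make this rigorous I would invoke that for a finite flat morphism over a connected Noetherian scheme, the length function $R\mapsto \mathrm{length}_{\kappa(R)}(\bX_R)$ is locally constant; and then argue that a flat morphism whose fibers are zero-dimensional and proper (as closed subschemes of a relative projective space) is automatically finite.

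Next, I would produce the required sample computation. For $\Gamma_W$ the cleanest route is \Cref{prop.20.pts.chow}(1): it follows from the Chow ring of $\bP^3\times\bP^3$ that for \emph{any} $R\in W$ the scheme $\Gamma_R$ has length $(s+t)^6=20\,s^3t^3$, so in particular such a fiber exists and has the asserted length. For $\bX_U$ (and symmetrically for $\bX'_{U'}$), the most economical argument is to combine flatness with the already-cited result of Van den Bergh \cite{VdB88} used in \Cref{pr.uu}: a dense open subset of $\bG$ parametrizes algebras with exactly $20$ point modules, and that open set meets $U$ in a dense open subset where $\bX_R$ consists of $20$ points counted with multiplicity. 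Alternatively, one can exhibit a single $R\in U$ directly—for instance, take $R$ to be the relations of $A(\a,\b,\c)$ with $\a\b\c(\a+\b+\c+\a\b\c)\ne 0$; then \Cref{prop.Gamma.finite.iff,prop.Gamma} supply an $R\in U$ for which $\bX_R=\pr_1(\Gamma_R)=\fP$ is a set of $20$ distinct reduced points, confirming length $20$.

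Finally, the statement for $\Gamma_W$ can be handled two ways, and I would mention both. The first is the Chow argument above, applied pointwise. The second, which parallels the organization of \Cref{th.flat20}(2), is that over $W$ the projection $\pr_1\colon \Gamma_W\to \bX_W$ is an isomorphism, so length is transported from $\bX_W$ to $\Gamma_W$ fiberwise.

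The step most likely to require care is the passage from ``locally constant length on $U$'' to ``constant length $20$,'' because one must be sure the flat family is actually finite (not merely proper with finite fibers) so that the fiber length is a well-defined Hilbert-polynomial invariant. This is automatic from properness together with zero-dimensional fibers and flatness, but is worth noting explicitly. Everything else is essentially formal once \Cref{th.flat20} and \Cref{prop.20.pts.chow}(1) are in hand.
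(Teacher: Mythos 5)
Your argument is correct and follows essentially the same route as the paper: flatness (\Cref{th.flat20}) makes the fiber length constant over the connected open loci $U$, $U'$, $W$, a single fiber of degree $20$ is supplied by the generic count of Van den Bergh already invoked in \Cref{pr.uu}, and the statement for $\Gamma_W$ transfers via the isomorphism $\pr_1\colon\Gamma\to\bX$ over $W$. One small caution about your alternative sample fiber: knowing from \Cref{prop.Gamma} that $\pr_1(\Gamma_R)$ is set-theoretically $20$ distinct points does not by itself show that the determinantal scheme $\bX_R$ (cut out by the $4\times 4$ minors) has length exactly $20$, so the Van den Bergh route, or the Chow computation of \Cref{prop.20.pts.chow} for $\Gamma$, is the safer choice.
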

\begin{proof}
  We can embed the family $\bX\to \bG$ into the relative projective
  space $\bP(V^*)_\bG= \bP(V^*)\times \bG$ in the usual fashion.
  %, by   identifying a point module of $A_R=T(V)/(R)$ with the annihilator of
 % its degree-zero generator in $V=(A_R)_1$.

The flatness of \Cref{th.flat20} ensures that all fibers $\bX_R$ have
the same degree in $\bP(V^*)$ so long as $R\in U$, i.e.  when $\dim(\bX_R)=0$.
%the point scheme of $A_R$ is zero-dimensional. 
But we know there are
six-dimensional relation spaces $R$ where the degree is $20$ (e.g. the
algebras in \cite{VdB88,VVW98,SV99,CS15,ChV}). 

The case of $\bX'_{U'}$ is analogous, while that of $\Gamma_W$ follows
as in the proof of \Cref{th.flat20}, using the fact that $\pr_1:\Gamma\to \bX$
is an isomorphism over $W$.
\end{proof}

\begin{proposition}\label{pr.20}
  If $\sum \a_i+\prod \a_i\ne 0$ and $\prod\a_i\ne 0$ and   $A(\a_1,\a_2,\a_3)=TV/(R)$, then $\bX_R$ consists of the 20
  points in \Cref{table.20.pts}.
\end{proposition}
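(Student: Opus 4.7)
The plan is to combine the flatness-based degree count from \Cref{cor.flat20} with an explicit exhibition of 20 distinct points in $\bX_R$, which together force equality. The argument is meant to be shorter than the one carried out in \Cref{prop.Gamma} because the flatness machinery saves us from separately establishing the reverse containment $\bX_R \subseteq \fP$ via the auxiliary polynomials $q,q_1,q_2,q_3$.

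First, under the hypotheses $\prod\a_i\ne 0$ and $\sum\a_i+\prod\a_i\ne 0$, \Cref{prop.Gamma.finite.iff} guarantees that the scheme $\G$ associated to $TV/(R)=A(\a_1,\a_2,\a_3)$ is finite. Consequently $\bX_R=\pr_1(\G)$ is zero-dimensional, placing $R$ in the open set $U\subseteq\bG$ of \Cref{pr.uu}. \Cref{cor.flat20} then tells us that $\bX_R$ has degree exactly $20$ when counted with multiplicity.

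Next, I would show that the 20 points $\fP$ listed in \Cref{table.20.pts} are pairwise distinct and all lie in $\bX_R$. Distinctness is already established in \Cref{sect.remarks} using only the hypothesis $\a\b\c\ne 0$, independent of the non-degeneracy $\sum\a_i+\prod\a_i\ne 0$. For the containment $\fP\subseteq\bX_R$, each $p\in\fP$ comes with a natural candidate partner $\theta(p)\in\PP(V^*)$ defined in \Cref{sect.defn.P}; one verifies directly that the six bi-homogeneous forms in \Cref{6.polyns} all vanish at $(p,\theta(p))$, so $(p,\theta(p))\in\G$, which yields $p\in\pr_1(\G)=\bX_R$. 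This amounts to a case-by-case check indexed by which of the blocks $\fP_\infty,\fP_0,\fP_1,\fP_2,\fP_3$ contains $p$, and is precisely the computation performed in the proof of \Cref{prop.Gamma}, so I would simply cite that verification rather than reproduce it.

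With these two ingredients in hand, $\bX_R$ is a zero-dimensional scheme of degree $20$ that contains $20$ distinct points, which forces $\bX_R=\fP$ set-theoretically with every point occurring with multiplicity one. The only potential obstacle is the case analysis needed for $\fP\subseteq\bX_R$, but it is a finite calculation already recorded in \Cref{prop.Gamma}; the genuine novelty here is that \Cref{cor.flat20} short-circuits the geometric cut-out argument involving $q,q_1,q_2,q_3$ that was used earlier to prove the reverse inclusion.
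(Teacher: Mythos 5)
Your proof is correct, but it takes a genuinely different route from the paper's. The paper's proof of \Cref{pr.20} works forward: it takes an arbitrary closed point $p\in\bX_R$, uses the nonvanishing of the determinant to show that at least one of $q,q_1,q_2,q_3$ is nonzero at $p$, exploits the $H_4$-symmetry to reduce to the case where $\sum x_i^2$ is the nonvanishing one, and then uses the factored minors $h_{12},h_{13},h_{23}$ together with \Cref{lem.8.points} to pin $p$ down inside $\fP$; the reverse inclusion $\fP\subseteq\bX_R$ is disposed of by noting all $h_{ij}$ vanish on $\fP$, and \Cref{cor.flat20} is invoked only to upgrade the conclusion to multiplicity one. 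You instead prove only the inclusion $\fP\subseteq\bX_R$ (via the vanishing of the forms in \Cref{6.polyns} at $(p,\theta(p))$, already checked in \Cref{prop.Gamma}) and let the degree count of \Cref{cor.flat20} do the rest: a zero-dimensional scheme of degree $20$ containing $20$ distinct reduced points must be exactly those points with multiplicity one. That numerical squeeze is valid and makes the second half of the argument cleaner, and it is arguably more in the spirit of the "alternate argument via flatness" that the section advertises. One caveat on your framing: you do not actually escape the auxiliary quadrics $q,q_1,q_2,q_3$, because you need $\dim(\bX_R)=0$ to place $R$ in $U$ before \Cref{cor.flat20} applies, and you obtain that from \Cref{prop.Gamma.finite.iff}, whose proof is precisely the $h_{ij}$-factorization and linear-independence argument for those quadrics. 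So the quadric computations are relocated rather than eliminated; what your route genuinely buys is that, once zero-dimensionality is known, the identification of the points requires only the easy containment plus counting, with no case analysis on which quadric fails to vanish.
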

\begin{proof}
If we show that every closed point of $\bX_R$ is one of the  points in \Cref{table.20.pts}, then $\dim(\bX_R)=0$ so, by \Cref{cor.flat20} above, 
$\bX_R$ will consist of 20 points counted with multiplicity.  

Let $p$ be a  closed point in $\bX_R$. Consider the four
quadratic polynomials appearing as the right hand factors of $h_{12}$,
$h_{14}$, $h_{25}$, and $h_{36}$, which are
\begin{equation*}
  \sum x_i^2,\quad x_0^2-\b\c x_1^2-\c x_2^2 +\b x_3^2,\quad x_0^2+\c x_1^2-\a\c x_2^2 -\a x_3^2, \, \text{ and }  \, x_0^2-\b x_1^2+\a x_2^2 -\a\b x_3^2.  
\end{equation*}
Either all of them vanish at $p$, or at least one does not. 

Not all of them can vanish at $p$ because the determinant
\begin{equation*}
\det   \begin{pmatrix}
1 & 1 & 1 & 1 \\
    1&-\b\c&-\c&\b \\ 1&\c&-\a\c&-\a \\ 1&-\b &\a &-\a\b
  \end{pmatrix}  \;=\; - (\a+\b+\c+\a\b\c)^2 
\end{equation*}
is non-zero by hypothesis.

Thus, at least one of the four quadratic polynomials does not
  vanish at $p$. Note that the four quadratic polynomials are
permuted up to scaling by the $H_4$-action discussed above
(well-defined by our assumption $\alpha\beta\gamma\ne 0$), so we may
as well assume that $\sum x_i^2$ does not vanish at $p$. But then,
examining the minors $h_{12}$, $h_{13}$ and $h_{23}$, we see that $p$
is one of the finitely many points in
\begin{equation*}
  x_0x_3-\gamma x_1x_2 = x_0x_2-\beta x_1x_3 = x_0x_1-\alpha x_2x_3=0.
\end{equation*}
These points belong to $\fP$ so $p \in \fP$. However, it is easy to see that every $h_{ij}$ vanishes on $\fP$ so $\bX_R=\fP$. 
\end{proof}

%
%\subsection{The description of $\G$}
%
%Suppose that $\a_1+\a_2+\a_3+\a_1\a_2\a_3 \ne 0$. 
%
%
%Let $E_i=\pr_i(\G)$. Thus $E_1$ is the reduced subscheme of $\PP(V^*)$ consisting of the 20 points in \Cref{table.20.pts}.
%We will now show that $E_2=E_1$, that $\pr_i:\G \to E_i$ is an isomorphism, and that there is an automorphism $\theta:E_1\to E_2$
%such that $\G$ is the graph of $\theta$.
%
%The arguments are the same as those in \cite[\S2]{SS92}.
%
%The computation determining $E_2$ is essentially the same as that
%computing $E_1$. One writes the relations as $\bfx M'=0$, where
%$$
%M' \; :=\; 
%\begin{pmatrix}
%-x_1        & -x_2       & -x_3      & -x_3  & -x_2    & -x_1 \\
%x_0       &\b  x_3 & \c x_2 & x_2  & -x_3  & -x_0 \\
%\a x_3   & x_0     & \c x_1 & -x_1 & -x_0   & x_3 \\
%\a x_2    &\b x_1 & x_0     & -x_0  & x_1   & -x_2 
%\end{pmatrix},
%$$
%and then $E_2$ is the scheme-theoretic zero locus of the $4 \times 4$
%minors of $M'$.  One could compute the minors, then check that their
%zero locus is exactly the same as the zero locus of the $4 \times 4$
%minors of $M$.
%  
%

%%%%%%%%%%%%%%%%%%%%%%%%%%%%%%%%%%%%%%%%%%%%%%%%%%%%%%%%%%%%%%%%%%%%%% 
%%%%%%%%%%%%%%%%%%%%%%%%%%%%%%%%%%%%%%%%%%%%%%%%%%%%%%%%%%%%%%%%%%%%%%
 
\section{The algebras $R(a,b,c,d)$ of  Cho, Hong, and Lau}
\label{sect.CHL.algs}
 
 In this section $a,b,c$ do not denote square roots of $\a,\b,\c$.
%In this section we work over $\CC$. %a fixed algebraically closed field $\Bbbk$.

\subsection{The definition}
\label{ssect.CHL.alg.defn}

%Let $a,b,c,d$ be non-zero complex numbers such that $ac+bd=0$ and $a^2-b^2+c^2-d^2$ is a scalar multiple of $ac$.

Let  $(a,b,c,d) \in \Bbbk^4-\{0\}$ and define $R(a,b,c,d)$, or simply $R$, to be the free algebra $TV=\Bbbk\langle x_1,x_2,x_3,x_4\rangle$ modulo the relations
\begin{align*}
\hbox{(R1)} \qquad \phantom{xxxxx} & ax_4x_3+bx_3x_4 +cx_3x_2+dx_4x_1=0,   \\
\hbox{(R2)} \qquad \phantom{xxxxx}  & ax_3x_2+bx_2x_3 +cx_4x_3+dx_1x_2=0,   \\
\hbox{(R3)} \qquad \phantom{xxxxx}  & ax_2x_1+bx_1x_2 +cx_1x_4+dx_2x_3=0,   \\
\hbox{(R4)} \qquad \phantom{xxxxx} & ax_1x_4+bx_4x_1 +cx_2x_1+dx_3x_4=0,   \\
\hbox{(R5)} \qquad \phantom{xxxxx} &  ax_3x_1-ax_1x_3 +cx_4^2 -cx_2^2=0,      \\
\hbox{(R6)} \qquad \phantom{xxxxx} & bx_4x_2-bx_2x_4 +dx_3^2 -dx_1^2=0.   
\end{align*}
For example, $R(1,-1,0,0)$ is the commutative polynomial ring on 4 generators. 

Since $R(a,b,c,d)$ depends only on $(a,b,c,d)$ as a point in $\PP^3$, the family of algebras  
$R(a,b,c,d)$ is parametrized by $\PP^3$.
\Cref{prop.almost.Skly} concerns those algebras  $R(a,b,c,d)$ parametrized by the points on the quadric $ac+bd=0$. 
That quadric is isomorphic to $\PP^1 \times \PP^1$.

The lines 
\begin{align*}
  \ell_1& \; := \; \{a+id=c+ib=0\} \qquad \text{and} 
\\
  \ell_2& \; := \; \{a-id=c-ib=0\}
 \end{align*}
on that quadric and their open subsets
 \begin{align*}
  \ell_1^\circ & \; := \; \ell_1 \, - \,  \{ (0,i,1,0), \, (1,0,0,i), \, (1,-i,-1,i), \, (1,i,1,i),   \,(1,-1,i,i), \, (1,1,-i,i) \}             \qquad \text{and} 
\\
  \ell_2^\circ & \; := \; \ell_2 \, - \, \{(0,-i,1,0), \, (1,0,0,-i), \, (1,i,-1,-i), \, (1,-i,1,-i),   \, (1,-1,-i,-i), \, (1,1,i,-i) \}
 \end{align*}
play a distinguished role.
 
 \subsection{}
At \cite[Conj. 8.11]{CHL}, Cho, Hong, and Lau conjecture that when $ac+bd=0$, % and $a^2-b^2+c^2-d^2$ is a scalar multiple of $ac$,
 $R$ is isomorphic to a 4-dimensional Sklyanin algebra,
i.e., isomorphic to $A(\a,\b,\c)$  for some $\a,\b,\c \in \Bbbk$ such that $\a+\b+\c+\a\b\c=0$. 
\Cref{prop.almost.Skly} shows that $R$ is isomorphic to a 4-dimensional Sklyanin algebra  if and only if $(a,b,c,d) \in \ell_1 \cup \ell_2$.
Nevertheless, $R$ is always isomorphic to $A(\a,\b,\c)$  for {\it some} $\a,\b,\c$.

\begin{proposition}
\label{prop.R.new.relns}
Let  $z_0=\frac{1}{2}(x_2+x_4)$, $z_1=\frac{1}{2}(x_1+x_3)$, $z_2=\frac{1}{2}(x_1-x_3)$, and $z_3=\frac{1}{2}(x_2-x_4)$.
The algebra $R(a,b,c,d)$ is equal to $\Bbbk\langle z_0,z_1,z_2,z_3\rangle$ modulo the relations
\begin{align*}
  (a-b-c+d)[z_0,z_1] &  \;=\; (-a-b+c+d)\{z_2,z_3\},    \\
 (-a+b+c+d)[z_0,z_2]  &  \;=\; (a+b-c+d)\{z_3,z_1\},    \\
 (a+b+c+d)\{z_0,z_1\}  &  \;=\; (a-b+c-d)[z_2,z_3],   \\
 (a+b+c-d)\{z_0,z_2\}    &  \;=\; (-a+b-c-d)[z_3,z_1], \\
 b[z_0,z_3]   &  \;=\; d\{z_1,z_2\}, \\
   c\{z_0,z_3\}  &  \;=\;a[z_1,z_2] .
\end{align*}
\end{proposition}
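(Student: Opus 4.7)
The change of variables is an invertible linear map on the four-dimensional vector space $V$ of generators, with inverse $x_1=z_1+z_2$, $x_2=z_0+z_3$, $x_3=z_1-z_2$, $x_4=z_0-z_3$; this uses only $\mathrm{char}(\Bbbk)\ne 2$, which is assumed. Hence $R(a,b,c,d)=TV/(R)$ where the relation subspace $R\subseteq V^{\otimes 2}$ may be presented either as the span of (R1)--(R6) or as the span of any other six elements that span the same subspace. My task is therefore purely linear algebra inside $V^{\otimes 2}$: rewrite (R1)--(R6) in the new basis and show the resulting span coincides with the span of the six relations listed in the proposition. I would organize the proof as a two-step verification followed by a dimension/linear-independence remark.

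\textbf{Step 1 (the easy pair).} Expand (R5) and (R6) directly. Because (R5) involves only $x_1,x_3$ (coming from $z_1,z_2$) and $x_2,x_4$ (coming from $z_0,z_3$), and likewise for (R6) with the roles reversed, the expansions are short: one checks
\[
x_3x_1-x_1x_3=2[z_1,z_2],\qquad x_4^2-x_2^2=-2\{z_0,z_3\},
\]
\[
x_4x_2-x_2x_4=2[z_0,z_3],\qquad x_3^2-x_1^2=-2\{z_1,z_2\},
\]
so that (R5) becomes $2\bigl(a[z_1,z_2]-c\{z_0,z_3\}\bigr)=0$ and (R6) becomes $2\bigl(b[z_0,z_3]-d\{z_1,z_2\}\bigr)=0$, matching the last two relations of the proposition.

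\textbf{Step 2 (the four mixed relations).} Expand (R1)--(R4). Each produces a linear combination of the eight monomials $z_pz_q$ with $(p,q)$ in the four ``cross'' pairs $\{0,1\},\{0,2\},\{1,3\},\{2,3\}$ (in both orders); the point is that no $z_pz_p$ terms appear and no pairs $\{0,3\},\{1,2\}$ either, because (R1)--(R4) do not contain $x_i^2$ and have a $\mathbb{Z}/4$-symmetry $x_i\mapsto x_{i+1}$ that interacts cleanly with the change of basis (one checks $\sigma(z_0,z_1,z_2,z_3)=(z_1,z_0,z_3,-z_2)$). A short computation shows that (R1)$-$(R2) is the sum of the first two displayed relations of the proposition, (R1)$+$(R2) is the difference of the third and fourth, and the symmetric pair (R3)$\mp$(R4) produces the opposite combinations. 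These four linear combinations involve the four relations of the proposition with an invertible coefficient matrix (e.g.\ the first and second relations can be recovered as $\frac12\bigl((\mathrm{R1})-(\mathrm{R2})\pm(\mathrm{R3})\mp(\mathrm{R4})\bigr)$, up to grouping by commutator/anticommutator), so each of the first four proposition-relations lies in the span of (R1)--(R6) and, conversely, each of (R1)--(R4) lies in the span of the six proposition-relations.

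\textbf{Conclusion and obstacle.} Combining the two steps, the six proposition-relations span the same subspace of $V^{\otimes 2}$ as the image of (R1)--(R6) under the linear change of basis, so they present the same algebra. The argument is a bookkeeping exercise rather than a conceptual one; the only real obstacle is organizing the non-commutative expansions of (R1)--(R4) without sign errors. I would carry this out by tabulating, for each (R$i$), the eight coefficients of $z_pz_q$ in a single $4\times 8$ table, then reading off the commutator/anticommutator parts from sums and differences of rows, which makes the matching to the proposition's relations mechanical.
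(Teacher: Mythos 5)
Your proposal is correct and takes essentially the same route as the paper: a linear change of basis on $V$ followed by taking the four $\pm$ sign combinations of (R1)--(R4) (your $\tfrac12((\mathrm{R1})-(\mathrm{R2})\pm((\mathrm{R3})-(\mathrm{R4})))$ etc.\ coincide with the paper's $\tfrac12((\mathrm{R1})\pm(\mathrm{R3})\mp(\mathrm{R2})\mp(\mathrm{R4}))$ combinations), plus the direct expansion of (R5), (R6). The only difference is the intermediate grouping of the relations, which is immaterial since the resulting spanning sets agree.
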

\begin{proof} 
Since $x_1=z_1+z_2$, $x_2=z_0+z_3$,  $x_3=z_1-z_2$,  and  $x_4=z_0-z_3$, 
the relations (R1)--(R4) can be replaced by the four relations
\begin{align*}
\hbox{$\frac{1}{2}$((R1)+(R3))}: \qquad & \; \;   (a+d)z_0z_1 +(b+c)z_1z_0+(a-d)z_3z_2+(b-c)z_2z_3=0 ,   \\
\hbox{$\frac{1}{2}$((R1)\,-\,(R3))}:\qquad & \; \;   (d-a)z_0z_2-(b+c)z_2z_0-(a+d)z_3z_1+(c-b)z_1z_3=0,   \\
\hbox{$\frac{1}{2}$((R4)+(R2))}: \qquad &\; \;   (b+c)z_0z_1 +(a+d)z_1z_0+(d-a)z_2z_3+(c-b)z_3z_2=0,   \\
\hbox{$\frac{1}{2}$((R4)\,-\,(R2))}: \qquad& \; \;   (a-d)z_2z_0+(b+c)z_0z_2-(a+d)z_1z_3+(c-b)z_3z_1=0.
\end{align*}

\begin{align*}
\hbox{$\frac{1}{2}$((R1)+(R3))}: \qquad & \; \;   (a+d)z_{01}+(b+c)z_{10}+(a-d)z_{32}+(b-c)z_{23}=0 ,   \\
\hbox{$\frac{1}{2}$((R1)\,-\,(R3))}:\qquad & \; \;   (d-a)z_{02}-(b+c)z_{20}-(a+d)z_{31}+(c-b)z_{13}=0,   \\
\hbox{$\frac{1}{2}$((R4)+(R2))}: \qquad &\; \;   (b+c)z_{01}+(a+d)z_{10}+(d-a)z_{23}+(c-b)z_{32}=0,   \\
\hbox{$\frac{1}{2}$((R4)\,-\,(R2))}: \qquad& \; \;   (a-d)z_{20}+(b+c)z_{02}-(a+d)z_{13}+(c-b)z_{31}=0.
\end{align*}
It follows that $R$ is equal to $\Bbbk\langle z_0,z_1,z_2,z_3 \rangle$ modulo the relations
\begin{align*}
\hbox{$\frac{1}{2}$((R1)+(R3)+(R2)+(R4))}: \qquad & \;  \;   (a+b+c+d)\{z_0,z_1\}   +(a-b+c-d)[z_3,z_2]=0,   \\
\hbox{$\frac{1}{2}$((R1)+(R3)\,-\,(R2)\,-\,(R4))}: \qquad & \;  \;    (a-b-c+d)[z_0,z_1]   +(a+b-c-d)\{z_3,z_2\}=0,    \\
\hbox{$\frac{1}{2}$((R1)\,-\,(R3)\,-\,(R2)+(R4))}:  \qquad & \;  \;  (-a+b+c+d)[z_0,z_2]   +(-a-b+c-d)\{z_1,z_3\}=0,   \\
\hbox{$\frac{1}{2}$((R1)\,-\,(R3)+(R2)\,-\,(R4))}: \qquad & \;  \;    (-a-b-c+d)\{z_0,z_2\}   +(a-b+c+d)[z_1,z_3]=0 \\
\hbox{$\frac{1}{2}$(R5)}:\qquad  & \; \;    a[z_1,z_2]   -c\{z_0,z_3\} =0,   \\
\hbox{$\frac{1}{2}$(R6)}:  \qquad& \; \;    b[z_0,z_3]   -d\{z_1,z_2\}=0.
\end{align*}
Rearranging these gives the presentation in the statement of the proposition.
\end{proof}

%It is remarked that  $ac+bd=0$ %and $a^2-b^2+c^2-d^2-\frac{\psi}{\phi}ac=0$   at \cite[p.45]{CHL}. 

\begin{proposition}
\label{prop.almost.Skly}
Let  $a,b,c,d \in \Bbbk$ and define  $p:=a+b$, $q:=a-b$, $r:=c+d$, and $s:=c-d$. 
Suppose  that $ac+bd=0$ and 
 \begin{equation}
\label{bad.abcd}
abcd (p+r)(p-r)(p+s)(p-s)(q+r)(q-r)(q+s)(q-s) \; \ne \; 0.
\end{equation}
\begin{enumerate}
  \item 
$R(a,b,c,d) \cong A(\a,\b,\c)$ where 
$$
\a = \frac{r^2-p^2}{q^2-s^2}, \qquad \b = \frac{p^2-s^2}{r^2-q^2}, \quad \hbox{and} \quad 
\c  =\frac{cd}{ab}\, . %= -\frac{c^2}{b^2}\, .
$$ 
  \item 
   $\a+\b+\c+\a\b\c=0$ if and only if $(a,b,c,d) \in \ell_1 \cup \ell_2$.  
\item{}
If  $(a,b,c,d) \in \ell_1 \cup \ell_2$, then $R(a,b,c,d)$ is isomorphic to the Sklyanin algebra $A(\a,1,-1)$ where 
$$
\a = \begin{cases}
 (b+d+ib-id)^2 (b+d-ib+id)^{-2} & \text{if $a+id=c+ib=0$,}
 \\
(b+d-ib+id)^2(b+d+ib-id)^{-2} & \text{if $a-id=c-ib=0$,}
\end{cases}
$$
and is generated by homogeneous degree-one elements 
$Y_{\pm},K,K'$ such that 
$$
KY_{\pm}=\mp iY_{\pm}K, \quad  K'Y_{\pm}=\pm iY_{\pm}K', \quad [Y_+,Y_-]=i(K'^2-K^2), \quad \text{and} \quad [K,K']=i\a (Y_+^2-Y_-^2).
$$
\end{enumerate}
 \end{proposition}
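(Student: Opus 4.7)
The plan is to derive all three parts from the Sklyanin-style presentation in \Cref{prop.R.new.relns} by an explicit rescaling of generators. For part (1), set $y_0 := z_0$ and $y_i := \lambda_i z_i$ for scalars $\lambda_i$ to be determined. Requiring the three ``anticommutator'' relations $\{y_0, y_i\} = [y_j, y_k]$ (for cyclic $(i,j,k)$) to hold pins down
\[
\tfrac{\lambda_1}{\lambda_2\lambda_3} = \tfrac{p+r}{q+s}, \qquad \tfrac{\lambda_2}{\lambda_1\lambda_3} = -\tfrac{p+s}{q+r}, \qquad \tfrac{\lambda_3}{\lambda_1\lambda_2} = \tfrac{c}{a};
\]
multiplying these determines $\lambda_1\lambda_2\lambda_3$, after which each $\lambda_i^2$ is a rational expression whose numerator and denominator appear as factors in \Cref{bad.abcd}. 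The non-vanishing hypothesis is thus exactly what lets us choose the $\lambda_i$ (possibly after adjoining square roots, consistent with the square-root convention of \Cref{sect.algebras}). Substituting these ratios into the three commutator relations and collapsing via $(x-y)(x+y) = x^2-y^2$ then yields the stated formulas for $\a$, $\b$, $\c$.

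For part (2), the ``if'' direction is immediate: parameterize $\ell_1$ by $(a,b,c,d) = (-id,b,-ib,d)$ and $\ell_2$ by $(a,b,c,d) = (id,b,ib,d)$, and in either case direct substitution into the formulas from (1) yields $\a = -1$ and $\c = 1$, so that $\a+\b+\c+\a\b\c = -1+\b+1-\b = 0$ identically. For the converse, substitute the formulas from (1) into $\a+\b+\c+\a\b\c$, clear denominators, and use the identity
\[
\a+\b+\c+\a\b\c \,=\, \tfrac{1}{2}\bigl[(1+\a)(1+\b)(1+\c) - (1-\a)(1-\b)(1-\c)\bigr]
\]
together with the quadric constraint $pr+qs=0$ (equivalent to $ac+bd=0$) to factor the resulting numerator as a product of two polynomials in $(p,q,r,s)$, one vanishing precisely on $\ell_1$ and the other precisely on $\ell_2$.

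For part (3), having established $(\a,\b,\c) = (-1,\alpha,1)$ on $\ell_1$ with $\alpha := \b$, invoke \Cref{lem.isom} to rewrite $A(-1,\alpha,1) \cong A(\alpha,1,-1)$, and simplify $\b$ under the substitution $a=-id$, $c=-ib$ to obtain the closed form $\alpha = (b+d+ib-id)^2(b+d-ib+id)^{-2}$; the $\ell_2$ case is the complex conjugate. For the presentation in terms of $Y_\pm,K,K'$, work in $A(\alpha,1,-1)$ with generators $y_0,y_1,y_2,y_3$ and set
\[
K \,:=\, y_0+y_1, \qquad K' \,:=\, y_0-y_1, \qquad Y_\pm \,:=\, y_2 \pm i y_3.
\]
Adding and subtracting the Sklyanin relations with $\b=1$ and $\c=-1$ produces the four quadratic consequences $y_0 y_2 = y_3 y_1$, $y_2 y_0 = -y_1 y_3$, $y_0 y_3 = -y_2 y_1$, $y_3 y_0 = y_1 y_2$, from which the four anticommutation-type identities $KY_\pm = \mp i Y_\pm K$ and $K'Y_\pm = \pm i Y_\pm K'$ follow by direct expansion. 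The two quadratic identities $[Y_+,Y_-] = i(K'^2 - K^2)$ and $[K,K'] = i\alpha(Y_+^2 - Y_-^2)$ then reduce to the formulas $K^2 - K'^2 = 2[y_2,y_3]$ and $Y_+^2 - Y_-^2 = 2i\{y_2,y_3\}$ combined with the remaining Sklyanin relations $\{y_0,y_1\} = [y_2,y_3]$ and $[y_0,y_1] = \alpha\{y_2,y_3\}$.

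The most delicate step will be the factorization for the ``only if'' direction of part (2); once the parameterizations of $\ell_1$ and $\ell_2$ are in hand, the rest is a bookkeeping exercise in the relations from \Cref{prop.R.new.relns}, and the construction of $K,K',Y_\pm$ works because the restriction of the Sklyanin relations to the $\b=1,\c=-1$ sector collapses to especially simple monomial identities.
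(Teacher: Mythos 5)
Your route coincides with the paper's at every stage. Part (1) is the paper's argument: the paper sets $v_i=\sqrt{\nu_j\nu_k}\,z_i$, which is exactly the solution $\lambda_i^2=\nu_j\nu_k$ of the system you write down, and \Cref{bad.abcd} is indeed what guarantees the $\nu_i$ are defined and nonzero. Part (3) follows the paper's reduction to $A(\a,1,-1)$ via \Cref{lem.isom}, and your explicit generators $K=y_0+y_1$, $K'=y_0-y_1$, $Y_\pm=y_2\pm iy_3$ do satisfy all six stated relations (they follow from the monomial identities $y_0y_2=y_3y_1$, $y_2y_0=-y_1y_3$, $y_0y_3=-y_2y_1$, $y_3y_0=y_1y_2$ exactly as you say); this is actually cleaner than the formulas recorded in the paper's \Cref{prop.special.case}. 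One sign to recheck in (1): with $\nu_2=-(q+r)/(p+s)$, which is the value forced by \Cref{prop.R.new.relns} and the one you adopt for $\lambda_2/\lambda_1\lambda_3$, the collapsing step yields $\b=(p^2-s^2)/(q^2-r^2)$, not the stated $(p^2-s^2)/(r^2-q^2)$; your two claims are mutually inconsistent here, so track that sign through part (2).

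The genuine gap is in the ``only if'' half of part (2). Clearing denominators and using $ac+bd=0$ gives $(q^2-s^2)(r^2-q^2)ab\,(\a+\b+\c+\a\b\c)=2(ab+cd)(p^2q^2-r^2s^2)$, so on the quadric the vanishing locus of $\a+\b+\c+\a\b\c$ is cut out by $(ab+cd)(pq-rs)(pq+rs)$. This is \emph{not} a product of two polynomials vanishing precisely on $\ell_1$ and precisely on $\ell_2$: its intersection with $\{ac+bd=0\}$ is a union of eight lines, namely $\ell_1$, $\ell_2$, $a-b=c+d=0$, $a+b=c-d=0$, $a+d=b-c=0$, $a-d=b+c=0$, $a=d=0$, and $b=c=0$. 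The six extra lines are eliminated only because each of them forces one of the factors $abcd$, $q-r$, or $p-s$ in \Cref{bad.abcd} to vanish; your plan never invokes \Cref{bad.abcd} at this stage, so as written it cannot conclude that the locus is $\ell_1\cup\ell_2$. The repair is exactly the paper's case analysis: factor $p^2q^2-r^2s^2=(pq-rs)(pq+rs)$, list the lines cut out by each factor on the quadric, and discard those incompatible with \Cref{bad.abcd}.
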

\begin{proof}
(1)
Condition \Cref{bad.abcd} ensures that the denominators in the  expressions for $\a,\b,\c,$ are  non-zero.

Let $z_0,z_1,z_2,z_3$ be as in \Cref{prop.R.new.relns}.  Condition
\Cref{bad.abcd} ensures that the denominators in the following
expressions are non-zero, so $R$ is defined by the following
relations:
%says that  $abc(p+r)(q-s)(q-r)(p-s)(p+s) \ne 0$, so the relations for $R$ can be written as 
%\begin{align*} 
% [z_0,z_1]   & \; = \;    \frac{-a-b+c+d}{a-b-c+d} \, \{z_2,z_3\},   \qquad  \{z_0,z_1\}    \; = \;  \frac{a-b+c-d}{a+b+c+d}\, [z_2,z_3], \\
% [z_0,z_2]   & \; = \;  \frac{a+b-c+d}{-a+b+c+d} \,\{z_3,z_1\},   \qquad  \{z_0,z_2\}   \; = \;   \frac{a-b+c+d}{a+b+c-d}  \, [z_3,z_1]   \\
%[z_0,z_3]  & \; = \;  \frac{d}{b}  \, \{z_1,z_2\}  \qquad  \qquad  \qquad  \qquad \, \{z_0,z_3\}  \; = \;    \frac{a}{c} \, [z_1,z_2]  . 
%\end{align*} 
\begin{align*} 
[z_0,z_1]   & \; = \;    \frac{r-p}{q-s} \, \{z_2,z_3\},   \qquad  \{z_0,z_1\}    \; = \;  \frac{q+s}{p+r}\, [z_2,z_3], \\
[z_0,z_2]   & \; = \;  \frac{p-s}{r-q} \,\{z_3,z_1\},   \qquad  \{z_0,z_2\}   \; = \;   \frac{q+r}{p+s}  \, [z_3,z_1] ,  \\
[z_0,z_3]  & \; = \;  \frac{d}{b}  \, \{z_1,z_2\},  \qquad   \! \qquad  \{z_0,z_3\}  \; = \;    \frac{a}{c} \, [z_1,z_2]  . 
\end{align*} 
For brevity, let's write these relations as 
\begin{align*} 
 [z_0,z_1]   & \; = \;   \mu_1\, \{z_2,z_3\},   \qquad  \{z_0,z_1\}    \; = \; \nu_1\, [z_2,z_3], \\
 [z_0,z_2]   & \; = \;  \mu_2\,\{z_3,z_1\},   \qquad  \{z_0,z_2\}   \; = \;  \nu_2  \, [z_3,z_1],  \\
[z_0,z_3]  & \; = \; \mu_3 \, \{z_1,z_2\},  \qquad \, \{z_0,z_3\}  \; = \;  \nu_3 \, [z_1,z_2].
\end{align*} 
Define $v_0:=z_0$, $v_1:=\sqrt{\nu_2\nu_3}\, z_1$, $v_2:=\sqrt{\nu_3\nu_1}\,z_2$, and $v_3:=\sqrt{\nu_1\nu_2}\, z_3$. 
Thus, $R$ is the free algebra $\Bbbk\langle v_0,v_1,v_2,v_3 \rangle$ modulo the relations 
\begin{align*} 
 [v_0,v_1]   & \; = \;   \a \, \{v_2,v_3\},   \qquad  \{v_0,v_1\}    \; = \;  \, [v_2,v_3], \\
 [v_0,v_2]   & \; = \;  \b\,\{v_3,v_1\},   \qquad  \{v_0,v_2\}   \; = \;    \, [v_3,v_1] ,  \\
[v_0,v_3]  & \; = \; \c \, \{v_1,v_2\},  \qquad \, \{v_0,v_3\}  \; = \;  \, [v_1,v_2], 
\end{align*} 
where 
$$
\a=\mu_1\nu_1^{-1} =\frac{r^2-p^2}{q^2-s^2}, \qquad \b=\mu_2\nu_2^{-1} =\frac{p^2-s^2}{r^2-q^2}, \quad \hbox{and} \quad \c =\mu_3\nu_3^{-1} =\frac{cd}{ab}.
$$

(2)
The expression
$
(q^2-s^2)(r^2-q^2)ab\big( \a+\b+\c+\a\b\c)
$ 
is equal to
\begin{align*}
& (r^2-p^2)(r^2-q^2)ab \; + \; (p^2-s^2)(q^2-s^2)ab\; + \;  (q^2-s^2)(r^2-q^2)cd \; + \; (r^2-p^2)(p^2-s^2)cd 
\\
& \;=\; ab \big(r^4+s^4+2p^2q^2 \,- \, (p^2+q^2)(r^2+s^2)\big) \;-\; cd\big(p^4+q^4+2r^2s^2 \, - \,(p^2+q^2)(r^2+s^2)\big)
\\
& \;=\;  ab \big(2p^2q^2 - 2r^2s^2+ (r^2+s^2-p^2-q^2)(r^2+s^2)\big)
\\
&\phantom{xxxxxxxxxxx} \,-\, cd\big(2r^2s^2-2p^2q^2 -(r^2+s^2-p^2-q^2)(p^2+q^2)\big)
\\
& \;=\; 2(ab+cd)(p^2q^2 - r^2s^2)+ (r^2+s^2-p^2-q^2)(abr^2+abs^2+cdp^2+cdq^2).
\end{align*}
But $p^2+q^2=2(a^2+b^2)$ and $r^2+s^2=2(c^2+d^2)$ so
$$
abr^2+abs^2+cdp^2+cdq^2\; = \; 2(ac+bd)(ad+bc) \; = \; 0.
$$ 
Therefore
$
(q^2-s^2)(r^2-q^2)ab\big( \a+\b+\c+\a\b\c) = 2(ab+cd)(p^2q^2 - r^2s^2)$.  
Hence $\a+\b+\c+\a\b\c=0$ if and only if $(ab+cd)(p^2q^2 - r^2s^2)=0$; i.e., if and only if
$(a^2-b^2 - c^2+d^2)(a^2-b^2+c^2-d^2)(ab+cd) =0$.

The locus $ac+bd=(a^2-b^2 - c^2+d^2)(a^2-b^2+c^2-d^2)(ab+cd) =0$ consists of 8 lines:  
the locus  $ac+bd=a^2-b^2-c^2+d^2=0$ is the union of the four lines
 $$
a-b=c+d=0, \quad  a+b=c-d=0, \quad a+id=c+ib=0, \quad a-id=c-ib=0;
$$ 
the locus  $ac+bd=a^2-b^2+c^2-d^2=0$ is the union of the four lines
$$
a-b=c+d=0, \quad  a+b=c-d=0, \quad a+d=b-c=0, \quad a-d=b+c=0;
$$
the locus  $  ac+bd=ab+cd=0$ is the union of the four lines
$$
a=d=0, \qquad b=c=0, \qquad  a+d=b-c=0, \qquad a-d=b+c=0.
$$ 
If $(a,b,c,d)$ is on the line $a-b=c+d=0$ or on the line $a-d=b+c=0$, then $0=a-b-c-d=q-r$; hypothesis \Cref{bad.abcd} excludes this possibility so 
$(a,b,c,d)$ is not on either of those lines.  
If $(a,b,c,d)$ is on the line $a=d=0$ or on the line $b=c=0$, then $abcd=0$; hypothesis \Cref{bad.abcd} excludes this possibility so 
$(a,b,c,d)$ is not on either of those lines. 
If $(a,b,c,d)$ is on the line $a+d=b-c=0$ or on the line $a+b=c-d=0$, then $0=a+b-c+d=p-s$; hypothesis \Cref{bad.abcd} excludes this possibility so 
$(a,b,c,d)$ is not on either of those lines. 
Thus,  $\a+\b+\c+\a\b\c=0$ if and only if $(a,b,c,d)$ is on the  union of the lines $a+id=c+ib=0$ and $a-id=c-ib=0$; i.e., 
$(a,b,c,d)\in \ell_1 \cup \ell_2$.

Not every $(a,b,c,d)\in \ell_1 \cup \ell_2$ satisfies \Cref{bad.abcd}. 
The points $(a,b,c,d)$ on the line $a+id=c+ib=0$ that do not satisfy \Cref{bad.abcd} are
\begin{equation}
\label{eq.bad.pts.1}
(0,i,1,0), \quad (1,0,0,i), \quad (1,-i,-1,i), \quad (1,i,1,i),   \quad (1,-1,i,i), \quad (1,1,-i,i) .
\end{equation}
The points $(a,b,c,d)$ on the line $a-id=c-ib=0$ that do not satisfy \Cref{bad.abcd} are
\begin{equation}
\label{eq.bad.pts.2}
(0,-i,1,0), \quad (1,0,0,-i), \quad (1,i,-1,-i), \quad (1,-i,1,-i),   \quad (1,-1,-i,-i), \quad (1,1,i,-i) .
\end{equation}

%If $a+b=c-d=0$, then $(\a_1,\a_2,\a_3)=(d^2/a^2,0,-d^2/a^2)$. 
(3) Suppose $a+id=c+ib=0$. Then $(\a,\b,\c)=(-1,\b,1)$ where
$\b=(b+d+ib-id)^2/ (b+d-ib+id)^2$.  As $(a,b,c,d)$ runs over the
points in \Cref{eq.bad.pts.1}, $\b$ takes on the values $-1$, $-1$,
$1$, $1$, $0$, $\infty$, respectively.  As $(a,b,c,d)$ runs over the
other points on the line $a+id=c+ib=0$, $\b$ takes on every value in
$\Bbbk-\{0,\pm 1\}$.

Suppose $a-id=c-ib=0$. Then $(\a,\b,\c)=(-1,\b,1)$ where
$\b=(b+d-ib+id)^2/ (b+d+ib-id)^2$.  As $(a,b,c,d)$ runs over the
points in \Cref{eq.bad.pts.2}, $\b$ takes on the values $-1$, $-1$,
$1$, $1$, $\infty$, $0$.  As $(a,b,c,d)$ runs over the other points on
the line $a-id=c-ib=0$, $\b$ takes on every value in
$\Bbbk-\{0,\pm 1\}$.
 
By \Cref{lem.isom}, $A(\b,1,-1) \cong A(-1,\b,1) \cong A(1,-1,\b)$ so to prove (3) it suffices to show that $A(\a,1,-1)$ has generators 
$Y_{\pm},K,K'$ satisfying the stated relations. We do this in \Cref{prop.special.case} below.
%This is  \cite[Cor. 1.3]{SS92}.
\end{proof}

\begin{proposition}
\label{prop.special.case}
Suppose $\a\in \Bbbk - \{0,\pm 1\}$. Let $i$ be a square root of $-1$. 
\begin{enumerate}
  \item 
$A(\a,1,-1)$ is a Sklyanin algebra $A(E,\tau)$ with $\tau$ being translation by a 4-torsion point.
  \item 
   There is a basis $Y_{\pm},K,K'$ for $A(\a,1,-1)_1$ such that
$$
KY_{\pm}=\mp iY_{\pm}K, \qquad K'Y_{\pm}=\pm iY_{\pm}K',
$$
$$
[Y_+,Y_-]=i(K'^2-K^2), \qquad [K,K']=i\a (Y_+^2-Y_-^2).
$$
\end{enumerate}
\end{proposition}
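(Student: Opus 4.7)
The plan is to establish (2) first by an explicit change of basis, and then deduce (1) from general results on Sklyanin algebras. I would set
\[
K := x_0+x_1, \qquad K' := x_0-x_1, \qquad Y_+ := x_2+ix_3, \qquad Y_- := x_2-ix_3,
\]
which is manifestly a basis of $A(\a,1,-1)_1$, and verify each of the six quoted relations by direct computation.

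Substituting $\b=1$ and $\c=-1$ into \Cref{skly.relns} and taking half-sums and half-differences of the two pairs of relations involving $x_0x_2$ and $x_0x_3$ yields the four monomial identities
\[
x_0x_2 = x_3x_1, \qquad x_2x_0 = -x_1x_3, \qquad x_0x_3 = -x_2x_1, \qquad x_3x_0 = x_1x_2.
\]
Using only these, each of $KY_\pm$ and $K'Y_\pm$ collapses to a product of the form $(x_3\mp ix_2)(x_0\pm x_1)$; since $x_3\mp ix_2=\mp iY_\pm$, the four $q$-commutation relations $KY_\pm=\mp iY_\pm K$ and $K'Y_\pm=\pm iY_\pm K'$ follow at once. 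The remaining two relations come from the routine expansions $K^2-K'^2=2\{x_0,x_1\}$, $[K,K']=-2[x_0,x_1]$, $[Y_+,Y_-]=-2i[x_2,x_3]$, and $Y_+^2-Y_-^2=2i\{x_2,x_3\}$, combined with the $\a$-pair $\{x_0,x_1\}=[x_2,x_3]$ and $[x_0,x_1]=\a\{x_2,x_3\}$ from \Cref{skly.relns}; they become $[Y_+,Y_-]=i(K'^2-K^2)$ and $[K,K']=i\a(Y_+^2-Y_-^2)$.

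For (1), the identity $\a+1+(-1)+\a\cdot 1\cdot(-1)=0$ is automatic, so $A(\a,1,-1)$ fits the paper's definition of a 4-dimensional Sklyanin algebra and therefore equals $A(E,\tau)$ for some elliptic curve $E\subseteq\PP^3$ and translation $\tau$ of $E$. To see that $\tau$ is a 4-torsion point I would invoke the classical Sklyanin parameterization (e.g.\ as described in \cite{SS92,LS93}), under which $(\a,\b,\c)$ are written as theta-constant ratios depending on the period lattice of $E$ and $\tau$; the condition that $\b$ or $\c$ equals $\pm 1$ is exactly the theta identity equivalent to $4\tau=0$.

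The main obstacle is locating the correct change of basis in part (2)---once the ansatz above is fixed, the verification is entirely mechanical. The identification $\tau\in E[4]$ in part (1) depends on invoking the standard Sklyanin parameterization formulas, but is not itself a nontrivial step.
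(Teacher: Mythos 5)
Your part (2) is correct and complete. The basis $K=x_0+x_1$, $K'=x_0-x_1$, $Y_\pm=x_2\pm ix_3$ does satisfy all six relations: the half-sum/half-difference identities $x_0x_2=x_3x_1$, $x_2x_0=-x_1x_3$, $x_0x_3=-x_2x_1$, $x_3x_0=x_1x_2$ hold in $A(\a,1,-1)$, the products $KY_\pm$ and $K'Y_\pm$ collapse as you describe, and the two commutator relations follow from $\{x_0,x_1\}=[x_2,x_3]$ and $[x_0,x_1]=\a\{x_2,x_3\}$. Your explicit verification is in fact more informative than the paper's, which only asserts that ``calculations like those in \cite[\S1.2]{SS92}'' work and records a choice of generators ($Y_\pm:=x_0\pm x_1$ alongside $K:=x_0+x_1$, $K':=x_0-x_1$) that is visibly not a basis --- evidently a typo that your ansatz repairs.

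The gap is in part (1). From $\a+1+(-1)+\a\cdot 1\cdot(-1)=0$ you may conclude that $A(\a,1,-1)$ is a $4$-dimensional Sklyanin algebra in the paper's sense, but since $\{\a,1,-1\}\cap\{0,\pm 1\}\ne\varnothing$ it is a \emph{degenerate} one, and the dictionary between Sklyanin algebras and pairs $(E,\tau)$ is only guaranteed for non-degenerate parameters. So ``therefore equals $A(E,\tau)$ for some elliptic curve $E$'' does not follow from the definition; the existence and smoothness of $E$ must be checked by hand at these special parameter values. The paper does exactly this: it writes down the two quadrics $x_0^2+x_1^2+x_2^2+x_3^2=x_0^2-x_1^2+\a x_2^2-\a x_3^2=0$ cutting out the relevant degree-four locus, uses the hypothesis $\a\notin\{0,\pm 1\}$ to verify via the Jacobian criterion that this is a smooth elliptic curve, and explicitly warns that the generic formulas of \cite[Prop.~2.4]{SS92} ``do not make sense when $\b=1$.'' Likewise, your identification of $\tau$ as $4$-torsion by appeal to ``the theta identity equivalent to $4\tau=0$'' is an assertion rather than an argument. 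The paper instead computes the automorphism $\s$ of $E$ explicitly as $(x_0,x_1,x_2,x_3)\mapsto(x_1,x_0,x_3,-x_2)$ and matches it, using the addition formulas of \cite{CS17}, with translation by the explicit order-$4$ point $\ve_1\oplus\c_2$. To close part (1) you would need to supply these computations or a citation that genuinely covers the degenerate case $\b=1$.
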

\begin{proof}
Let $A=A(\a,1,-1)$. By \Cref{lem.isom}, $A(\a,1,-1) \cong A(1,-1,\a)  \cong A(-1,\a,1)$. 
Algebras of the form  $A(1,-1,\a)$ are those identified in equation (1.9.4) of \cite{SS92} so the results in \cite{SS92} apply to $A$. 
The  zero locus of the $4 \times 4$ minors in  the proof of \cite[Prop. 2.4]{SS92} is the curve $E$ given by the equations
\begin{equation}
  \label{eq:ealpha}
x_0^2+x_1^2+ x_2^2+x_3^2 \; = \; x_0^2-x_1^2+ \a x_2^2-\a x_3^2 \; = \; 0.  
\end{equation}
The restrictions on $\a$ imply that the Jacobian matrix has rank 2 at all points of $E$ so $E$ { is} an elliptic curve. 
(The description of $E$, in particular, the formula for the polynomial $g_2$, in \cite[Prop. 2.4]{SS92} does not make sense when $\b=1$.)
The formula for the automorphism $\s:E \to E$ in \cite[Cor. 2.8]{SS92} is
\begin{equation}
  \label{eq:auto}
\s\begin{pmatrix} x_0 \\ x_1 \\ x_2 \\ x_3 \end{pmatrix} \; = \;
\begin{pmatrix}
\phantom{-ii}2\a x_1x_2x_3 - x_0 (-x_0^2-x_1^2- \a x_2^2+\a x_3^2)
\\
\phantom{ii}2\a x_0x_2x_3 + x_1 (\phantom{-i}x_0^2+x_1^2- \a x_2^2+\a x_3^2)
\\
\phantom{ii}2 x_0x_1x_3 \phantom{-}+ x_2 (\phantom{-i}x_0^2-x_1^2+ \a x_2^2+\a x_3^2)
\\
-2x_0 x_1x_2 \phantom{iii}+x_3 (\phantom{-i}x_0^2-x_1^2- \a x_2^2-\a x_3^2)
\end{pmatrix}
\; = \;
\begin{pmatrix} \phantom{-}  x_1 \\ \phantom{-}   x_0 \\ \phantom{-}  x_3 \\ -x_2 \end{pmatrix}   
\end{equation}
where the last equality uses the  fact that $x_0^2-x_1^2+ \a x_2^2-\a x_3^2=0$ on $E$. The formula for $\s$ can also be verified by observing that 
$$ 
 \label{6-by-4.M}
 \begin{pmatrix}
 -x_1 & x_0 & -\a x_3 & -\a x_2 \\   -x_2 & - x_3 & x_0 & - x_1 \\   -x_3 &  x_2 &  x_1 & x_0 \\   -x_3 & -x_2 & x_1 & -x_0 \\   -x_1 & -x_0 & -x_3 & x_2 \\   -x_2 & x_3 & -x_0 & -x_1
\end{pmatrix}
\begin{pmatrix} \phantom{-}  x_1 \\ \phantom{-}   x_0 \\ \phantom{-}  x_3 \\ -x_2 \end{pmatrix}  
\; = \; 0
$$
for all $(x_0,x_1,x_2,x_3) \in E$;  the $6 \times 4$ matrix in the previous equation is  the $6 \times 4$ matrix in \Cref{6-by-4.M}

Corollary 2.11 in \cite{CS17} involves elements $a,b,c \in \Bbbk$ such that $a^2=\a$, $b^2=1$, and $c^2=-1$; let $(a,b,c)=(a,1,i)$; 
 \cite[Cor. 2.11]{CS17} then says there is a 4-torsion point  $\ve_1 \in E$ such that if $p=(x_0,x_1,x_2,x_3) \in E$, then
$$
p+\ve_1 \; = \;  (i x_1 , -i x_0, i x_3, i x_2)  \; = \;  (x_1 , -x_0, x_3, x_2);
$$
by \cite[\S2.6]{CS17}, there is a 2-torsion point $\c_2 \in E$ such that
$
p+\ve_1 + \c_2    \; = \;  (x_1 , x_0, x_3, -x_2)
$;
thus
$$
\s(p) \; = \;   p+\ve_1 + \c_2.
$$

Calculations like those in \cite[\S1.2]{SS92} show that  $Y_{\pm}:=x_0 \pm x_1$, $K:=x_0+x_1$, $K':=x_0-x_1$, satisfy the relations in (2). 
\end{proof}

Part (2) of \Cref{prop.special.case} remains true when $A=A(0,1,-1)$
and in that case, $A(0,1,-1)$ is a homogenization of the quantized
enveloping algebra $U_q(\fsl_2)$ with $q=-i$. See \cite[\S2.4]{CSW16}
for details.

\subsection{Parameter spaces and modular curves}\label{subse.mod}

After \Cref{prop.special.case}(1),  it is natural to ask which pairs $(E,\tau)$ have the property that $A(E,\tau) \cong R(a,b,c,d)$ for some point $(a,b,c,d)$ 
in the ``Sklyanin locus'' $\ell_1^\circ \cup \ell_2^\circ$.
% emerging from \Cref{prop.almost.Skly}: the points on the lines
%\begin{equation*}
%  \ell_1=\{a+id=c+ib=0\}
%\end{equation*}
%and 
%\begin{equation*}
%  \ell_2=\{a-id=c-ib=0\}
%\end{equation*}
%satisfying \Cref{bad.abcd}. 
Similarly, we can ask how much redundancy there is in this parametrization: how many $(a,b,c,d) \in \ell_1^\circ \cup \ell_2^\circ$ lead to the same pair $(E,\tau)$?%of an elliptic curve and an order-$4$ point on it? 

Since the transformation
\begin{equation*}
  a\longleftrightarrow -c,  \qquad  b\longleftrightarrow -d
\end{equation*}
interchanges  $\ell_1$ and $\ell_2$ and intertwines the respective transformations
\begin{equation*}
  (a,b,c,d)\mapsto \b,
\end{equation*}
it suffices to consider what happens for $\ell_1$.

Note first that the map 
\begin{equation}\label{eq:squares}
  \varphi:(a,b,c,d)\mapsto \frac{(b+d+ib-id)^2}{(b+d-ib+id)^2}
\end{equation}
recovering the parameter $\alpha$ of the Sklyanin algebra $A(\alpha,1,-1)$ from $(a,b,c,d)\in \ell_1^\circ$
%\begin{equation*}
%  (a,b,c,d)\in \ell_1^\circ %\setminus \text{the points }\Cref{eq.bad.pts.1}
%\end{equation*}
is a two-fold cover of
\begin{equation*}
X \; := \;   \bP^1- \{\pm 1,0,\infty\}. 
\end{equation*}
Now, to each $\alpha\in X$ associate the elliptic curve $E_\alpha$ of point modules of $A(\alpha,1,-1)$, defined by \Cref{eq:ealpha}. Since furthermore the point $(1,1,i,i)$ belongs to all $E_\alpha$, the $\alpha$-indexed family $E\to X$ of elliptic curves $E_\alpha$ over $X$ has a section. 

Finally, \Cref{eq:auto} defines an automorphism of order $4$ of the
family $E\to X$. Since the section $(1,1,i,i)$ puts on $E$ a unique
structure of an abelian curve over $X$ \cite[Theorem 2.1.2]{KM}, we
can identify said automorphism with a point of $E$ of order
(precisely) $4$. In other words, we obtain a family of abelian curves
over $X$ with marked order-$4$ points. This moduli problem is
represented by the modular curve $Y_1(4)$ classifying such data (see
e.g. \cite[Theorem 8.2.1]{DI} and references therein), and hence we
obtain a morphism
\begin{equation*}
  \psi:X\to Y_1(4). 
\end{equation*}
The following results give the full picture of the parametrization of the Cho-Hong-Lau algebras.

\begin{proposition}\label{pr.modular}
  The map $\psi:X=\bP^1- \{\pm 1,0,\infty\} \longrightarrow Y_1(4)$
  defined above as
  \begin{equation*}
   X\ni\alpha\mapsto (E,\tau)\in Y_1(4)\text{ for }A(\alpha,1,-1)\cong A(E,\tau)
  \end{equation*}
  is a two-fold cover, identifying $\pm\alpha$.
\end{proposition}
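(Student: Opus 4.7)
The plan is to exhibit $\psi$ as a composition $X \xrightarrow{\alpha\mapsto\alpha^2} X/\{\pm 1\} \xrightarrow{\overline\psi} Y_1(4)$ with $\overline\psi$ an isomorphism. \textbf{Step 1 (factorization).} Applying \Cref{lem.isom} with $(\a,\b,\c)=(\alpha,1,-1)$ gives an algebra isomorphism $A(\alpha,1,-1)\cong A(-\alpha,1,-1)$, and since a non-degenerate $4$-dimensional Sklyanin algebra $A(E,\tau)$ determines $(E,\tau)$ up to the natural notion of isomorphism in $Y_1(4)$ \cite{SS92}, the pairs $(E_\alpha,\tau_\alpha)$ and $(E_{-\alpha},\tau_{-\alpha})$ represent the same class in $Y_1(4)$. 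This yields the desired factorization, with $X/\{\pm 1\}\cong \bP^1\setminus\{0,1,\infty\}$ via $\alpha\mapsto\alpha^2$.

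\textbf{Step 2 (injectivity of $\overline\psi$).} If $\overline\psi(\alpha_1^2)=\overline\psi(\alpha_2^2)$, then $(E_{\alpha_1},\tau_{\alpha_1})\cong (E_{\alpha_2},\tau_{\alpha_2})$ as $Y_1(4)$-points, and functoriality of the Sklyanin-algebra construction produces an isomorphism $A(\alpha_1,1,-1)\cong A(\alpha_2,1,-1)$. The six-element orbit of $(\alpha_1,1,-1)$ under the identifications of \Cref{lem.isom} consists of
\[
(\alpha_1,1,-1),\ (1,-1,\alpha_1),\ (-1,\alpha_1,1),\ (-\alpha_1,1,-1),\ (-1,-\alpha_1,1),\ (1,-1,-\alpha_1),
\]
and only the first and fourth have their last two entries equal to $(1,-1)$. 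By \Cref{prop.isom} these are the only candidates for $(\alpha_2,1,-1)$, forcing $\alpha_2=\pm\alpha_1$ and hence $\alpha_1^2=\alpha_2^2$.

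\textbf{Step 3 (matching rational curves).} Both $X/\{\pm 1\}$ and $Y_1(4)$ are smooth irreducible affine rational curves obtained by deleting three points from $\bP^1$: the source via removal of $\{0,1,\infty\}$, and the target because $X_1(4)$ has genus zero with exactly three cusps (see e.g.\ \cite[Thm.~8.2.1]{DI}). The morphism $\overline\psi$ extends uniquely to a morphism $\widetilde\psi\colon\bP^1\to X_1(4)\cong\bP^1$ between smooth projective compactifications; since $\widetilde\psi$ is generically injective by step 2, it has degree $1$ and is therefore an isomorphism. The curve $E_\alpha$ is smooth for $\alpha\in X$ but degenerates at $\alpha\in\{0,\pm 1,\infty\}$, so the three points $\{0,1,\infty\}\subset\bP^1$ (the images of the four degenerate $\alpha$-values under $\alpha\mapsto\alpha^2$) are carried by $\widetilde\psi$ into the cuspidal locus $X_1(4)\setminus Y_1(4)$; being three in number they must exhaust all three cusps. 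Restricting to the complements gives the promised isomorphism $\overline\psi\colon X/\{\pm 1\}\stackrel{\sim}{\longrightarrow}Y_1(4)$.

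\textbf{Principal obstacle.} The most delicate ingredient is step 2 — specifically, passing from an isomorphism of the underlying moduli datum $(E,\tau)$ to an isomorphism of the associated Sklyanin algebras. This rests on the faithfulness of the functor $(E,\tau)\mapsto A(E,\tau)$ on non-degenerate inputs, as developed in \cite{SS92} and not reproved in this paper. Once granted, the rest of the argument is a combination of the explicit algebraic identifications in \Cref{lem.isom,prop.isom} with the standard description of $X_1(4)$ as a rational curve with three cusps.
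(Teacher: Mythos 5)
Your argument is correct and follows the same overall skeleton as the paper's proof: factor $\psi$ through $X/\{\pm 1\}$ and then identify the induced map with an isomorphism onto $Y_1(4)$ using the fact that both sides are thrice-punctured rational curves. The two key sub-steps, however, are carried out by genuinely different means. For the factorization, the paper exhibits the explicit linear automorphism $(x_0,x_1,x_2,x_3)\mapsto(x_0,x_1,x_3,-x_2)$ of $\bP^3$, checks that it interchanges $E_\alpha$ and $E_{-\alpha}$, and verifies that it intertwines the order-$4$ automorphisms; you instead invoke \Cref{lem.isom} together with the fact that a non-degenerate Sklyanin algebra determines its pair $(E,\tau)$. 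For the degree-one conclusion, the paper argues that the extension $\overline{\psi'}\colon\bP^1\to\bP^1$ has three points with singleton preimages (a Riemann--Hurwitz count), whereas you derive injectivity of $\overline\psi$ on the open locus from the isomorphism classification of the algebras $A(\a,\b,\c)$. Your injectivity argument is the more substantive departure, and it buys a cleaner logical flow, but note that it leans on the \emph{converse} implication --- that an isomorphism of pairs $(E,\tau)$ in $Y_1(4)$ forces an isomorphism $A(\alpha_1,1,-1)\cong A(\alpha_2,1,-1)$ --- which requires the independence of $A(E,\cL,\tau)$ from the choice of degree-$4$ line bundle $\cL$; this is true (any two such $\cL$ differ by a translation commuting with $\tau$) but is imported from \cite{SS92} rather than proved here, as you rightly flag. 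Two further minor points: (i) when citing \Cref{prop.isom} you should use the orbit generated by \Cref{lem.isom} (cyclic permutations of $(\a,\b,\c)$ and of $(-\a,-\c,-\b)$), as you in fact do --- the literal wording ``$(-\a,-\b,-\c)$'' in the statement of \Cref{prop.isom} is inconsistent with \Cref{lem.isom} and with its own proof; (ii) your final appeal to the degeneration of $E_\alpha$ at $\alpha\in\{0,\pm1,\infty\}$ to locate the cusps is not needed and would itself require justification (singularity of the projective model does not immediately identify the limiting moduli point); the containment $\widetilde\psi^{-1}(\text{cusps})\subseteq\{0,1,\infty\}$ together with injectivity and the count $3=3$ already forces equality.
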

\begin{proof}
  Note first that the automorphism
  \begin{equation*}
    (x_0,x_1,x_2,x_3)\mapsto (x_0,x_1,x_3,-x_2) 
  \end{equation*}
of $\bP^3$ interchanges the elliptic curves $E_\alpha$ and $E_{-\alpha}$, and moreover it intertwines their respective order-$4$ automorphisms defining them as points of $Y_1(4)$. This implies that $\psi$ factors through a morphism
\begin{equation*}
  \psi':X/\pm \to Y_1(4). 
\end{equation*}
Since $Y_1(4)$ is known to have three cusps and the left-hand side is a thrice-punctured projective line, $\psi'$ extends to a endomorphism $\overline{\psi'}$ of $\bP^1$. It follows from the fact that three distinct points have singleton preimages that $\overline{\psi'}$ is an isomorphism, and hence so is $\psi'$. 
\end{proof}

In conclusion, we have

\begin{corollary}\label{cor.modular}
%Let $\ell'_i$, $i=1,2$ be the lines $\ell_i$ punctured at \Cref{eq.bad.pts.1} and \Cref{eq.bad.pts.2} respectively.  
The maps $\ell^\circ_i \longrightarrow Y_1(4)$, $i=1,2$, that send $(a,b,c,d)$ to %$(E,\tau)$  
the underlying elliptic curve and automorphism of the Sklyanin algebra $R(a,b,c,d)$ are fourfold covers. 
\end{corollary}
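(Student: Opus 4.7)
The plan is to factor the map $\ell_i^\circ\to Y_1(4)$ as a composition of two two-fold covers already identified in the preceding results.

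Focus on $\ell_1^\circ$; the case of $\ell_2^\circ$ follows by the involution $(a,b,c,d)\leftrightarrow(-c,-d,a,b)$ swapping $\ell_1\leftrightarrow \ell_2$ noted at the beginning of \Cref{subse.mod}. By \Cref{prop.almost.Skly}(3), for each $(a,b,c,d)\in\ell_1^\circ$ the algebra $R(a,b,c,d)$ is isomorphic to $A(\alpha,1,-1)$ with $\alpha=\varphi(a,b,c,d)$ given by \Cref{eq:squares}. By \Cref{prop.special.case}(1), $A(\alpha,1,-1)\cong A(E_\alpha,\tau_\alpha)$ with $\tau_\alpha$ of order $4$, and the map $\alpha\mapsto (E_\alpha,\tau_\alpha)$ is exactly the morphism $\psi:X\to Y_1(4)$ of \Cref{pr.modular}. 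Therefore the map $\ell_1^\circ\to Y_1(4)$ under consideration is the composition $\psi\circ\varphi$.

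The plan then has two steps. First, I would verify that $\varphi:\ell_1^\circ\to X$ is a two-fold cover. On $\ell_1$ we have $a=-id$ and $c=-ib$, so $\ell_1\cong\mathbb{P}^1$ with homogeneous coordinates $(b:d)$, and $\varphi$ factors as
\begin{equation*}
(b:d)\;\longmapsto\;\bigl((1+i)b+(1-i)d\,:\,(1-i)b+(1+i)d\bigr)\;\longmapsto\;\bigl(((1+i)b+(1-i)d)^2:((1-i)b+(1+i)d)^2\bigr).
\end{equation*}
The first arrow is a linear automorphism of $\mathbb{P}^1$ since the matrix $\bigl(\begin{smallmatrix}1+i&1-i\\1-i&1+i\end{smallmatrix}\bigr)$ has determinant $4i\neq 0$, and the second is the squaring map $\mathbb{P}^1\to\mathbb{P}^1$, which is a two-fold cover away from $0,\infty$. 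A quick check shows that the six points excluded from $\ell_1$ in the definition of $\ell_1^\circ$ are precisely the preimages under this composition of $\{0,\pm 1,\infty\}\subset\mathbb{P}^1$, so $\varphi$ indeed maps $\ell_1^\circ$ onto $X=\mathbb{P}^1-\{0,\pm 1,\infty\}$ and is a two-fold (unramified) cover there. Second, by \Cref{pr.modular}, $\psi:X\to Y_1(4)$ is itself a two-fold cover identifying $\alpha$ with $-\alpha$.

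The composition $\psi\circ\varphi:\ell_1^\circ\to Y_1(4)$ is therefore a four-fold cover, as required. The only real point requiring care is the identification of the excluded twelve points on $\ell_1\cup\ell_2$ as exactly the fibers over $\{0,\pm 1,\infty\}$ in $X$, which is what prevents $\varphi$ from being a genuine cover over the missing loci; I expect this to amount to a short direct computation with the list of excluded points given just before \Cref{prop.almost.Skly}.
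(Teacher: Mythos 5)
Your proposal is correct and follows essentially the same route as the paper: the paper's proof is precisely to compose the two-fold cover $\varphi$ of \Cref{eq:squares} with the two-fold cover $\psi$ of \Cref{pr.modular}. The extra details you supply (the factorization of $\varphi$ through a linear automorphism followed by squaring, and the check that the twelve excluded points are exactly the preimages of $\{0,\pm1,\infty\}$) are left implicit in the paper but are consistent with its computation in the proof of \Cref{prop.almost.Skly}(3), where the excluded points are shown to yield $\beta\in\{-1,-1,1,1,0,\infty\}$.
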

\begin{proof}
  Simply compose $\psi$ and its analogue for $\ell_2$ (which are double covers by \Cref{pr.modular}) with the two-fold covers of the form \Cref{eq:squares}. 
\end{proof}

\section{Central elements}
\label{sect.centers}

\subsection{Central elements in $A(\a,\b,\c)$}

The next result  is often asserted but we could not find a proof in the literature so we include one here. 

\begin{proposition}
\label{prop.Skly.center}
Let $\Bbbk$ be any field. If $\{\a,\b,\c\} \cap \{0 , \pm 1\}=\varnothing$ and   $\a+\b+\c+\a\b\c = 0$, then $-x_0^2+x_1^2+x_2^2+x_3^2$ and $x_0^2+\b\c x_1^2-\c x_2^2+\b x_3^2$ belong to the center of $A(\a,\b,\c)$.
\end{proposition}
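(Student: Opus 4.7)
The plan is to verify that each candidate commutes with every generator $x_n$, $n \in \{0,1,2,3\}$. For both $\Omega_1 = -x_0^2+x_1^2+x_2^2+x_3^2$ and $\Omega_2 = x_0^2+\b\c x_1^2-\c x_2^2+\b x_3^2$ the argument splits into two phases, and the Sklyanin constraint $\a+\b+\c+\a\b\c=0$ enters in exactly one way: as a single resultant-type vanishing condition that forces an extra linear constraint to lie in the span of three relations coming from the defining relations.

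The easy direction is $[\Omega_1, x_l] = 0$ for $l = 1,2,3$. By the cyclic symmetry of the relations it suffices to take $l = 1$. Using Leibniz $[y, z^2] = \{z,[y,z]\}$ together with the defining relations,
\[
[\Omega_1, x_1] \;=\; -\a\{x_0, \{x_2, x_3\}\} \;-\; \{x_2, \{x_0, x_3\}\} \;+\; \{x_3, \{x_0, x_2\}\}.
\]
The last two summands combine: using $\{x_0, x_3\} = [x_1, x_2]$ and $\{x_0, x_2\} = [x_3, x_1]$, direct expansion shows
\[
\{x_2, \{x_0, x_3\}\} - \{x_3, \{x_0, x_2\}\} \;=\; [x_2, x_3]\,x_0 - x_0\,[x_2, x_3] \;=\; -[x_0, \{x_0, x_1\}] \;=\; -\a\,\{x_0, \{x_2, x_3\}\},
\]
hence $[\Omega_1, x_1] = 0$. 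No Sklyanin hypothesis enters here; at each step only the pair of relations involving a single non-zero index is invoked.

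The crucial step is $[\Omega_1, x_0] = 0$. Set $u := [x_0, x_1^2]$, $v := [x_0, x_2^2]$, $w := [x_0, x_3^2]$; we must show $u + v + w = 0$ in $A$. The relation $\{x_j, x_k\} = \a_i^{-1}[x_0, x_i]$ and Leibniz yield $\a\,\{x_1, \{x_2, x_3\}\} = u$ and its cyclic analogues, giving only the tautology $u+v+w = u+v+w$. To break it I compute $\{x_1, \{x_2, x_3\}\}$ a second way: solving each pair of defining relations
\[
[x_0, x_i] = \a_i\{x_j, x_k\}, \qquad \{x_0, x_i\} = [x_j, x_k]
\]
for $x_j x_k$ and $x_k x_j$ as linear combinations of $x_0 x_i$ and $x_i x_0$, and expanding the four summands of $x_1 x_2 x_3 + x_1 x_3 x_2 + x_2 x_3 x_1 + x_3 x_2 x_1$ via these substitutions, the occurrences of $x_1$ inside the triple products cancel out, leaving
\[
\{x_1, \{x_2, x_3\}\} \;=\; \tfrac{1-\b}{2\b}\,v \;+\; \tfrac{1+\c}{2\c}\,w.
\]
Combined with $\a\,\{x_1, \{x_2, x_3\}\} = u$ this is one linear dependency among $u, v, w$; cyclic permutations give two more. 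Collecting the three relations as rows of a $3\times 3$ scalar matrix $N$, an elementary check shows that the rows of $N$ sum to zero, so $\mathrm{rank}(N)\le 2$. The desired identity $u+v+w = 0$ is then equivalent to $(1,1,1)$ lying in the row span of $N$, i.e.\ to the vanishing of the $3\times 3$ determinant obtained from $N$ by replacing one of its rows by $(1,1,1)$. A direct expansion gives this determinant as $3\,\a\b\c\,(\a+\b+\c+\a\b\c)$, which vanishes by hypothesis.

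For $\Omega_2$ the same template applies. The identity $[\Omega_2, x_0] = \b\c\,u - \c\,v + \b\,w$ follows from Leibniz, and substituting the linear combination expressing $u$ in terms of $v, w$ reduces this to a scalar multiple of $\a+\b+\c+\a\b\c$. For $l \in \{1,2,3\}$, repeating the two-way expansion trick on the elements $\{x_0, \{x_2, x_3\}\}$, $\{x_2, \{x_0, x_3\}\}$, $\{x_3, \{x_0, x_2\}\}$ produces three linear relations among $[x_0^2, x_l]$, $[x_2^2, x_l]$, $[x_3^2, x_l]$, and substituting these into $[\Omega_2, x_l]$ again yields a scalar multiple of $\a+\b+\c+\a\b\c$. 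The main obstacle throughout is the bookkeeping for the three linear dependencies in each case; the conceptual content is that the Sklyanin constraint functions as exactly the resultant condition ensuring consistency between the three relations arising from the presentation and the extra linear constraint one wishes to impose.
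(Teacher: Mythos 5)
Your first phase is correct: $[\,{-x_0^2+x_1^2+x_2^2+x_3^2},x_1]=0$ does follow from the two relations involving the index $1$ exactly as you compute (and, as you observe, with no hypothesis on $\a,\b,\c$), and cyclicity handles $x_2,x_3$. The three linear dependencies $\a^{-1}u=\tfrac{1-\b}{2\b}v+\tfrac{1+\c}{2\c}w$ and its cyclic analogues among $u=[x_0,x_1^2]$, $v=[x_0,x_2^2]$, $w=[x_0,x_3^2]$ are also correct, as is the fact that the rows of $N$ (suitably normalized) sum to zero and that the determinant of $N$ with one row replaced by $(1,1,1)$ is a nonzero multiple of $\a+\b+\c+\a\b\c$. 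The gap is in the final deduction: the vanishing of that determinant certifies $(1,1,1)\in\operatorname{span}(\text{rows of }N)$ only when the two surviving rows are linearly independent, i.e.\ when $\rank(N)=2$. Nothing in your argument rules out $\rank(N)\le 1$, and this degeneration actually occurs at admissible parameters.

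Concretely, take $(\a,\b,\c)=(2,-\tfrac13,-5)$: then $\a+\b+\c+\a\b\c=0$ and $\{\a,\b,\c\}\cap\{0,\pm1\}=\varnothing$, yet the three rows of $N$ are $(\tfrac12,\,2,\,-\tfrac25)$, $(-\tfrac34,\,-3,\,\tfrac35)$ and $(\tfrac14,\,1,\,-\tfrac15)$, which are pairwise proportional; the row span is the line through $(5,20,-4)$, which does not contain $(1,1,1)$, so your three relations do not force $u+v+w=0$ there. This happens along the entire curve $\b=\frac{\a-3}{\a+1}$, $\c=\frac{\a+3}{1-\a}$ inside the surface $\a+\b+\c+\a\b\c=0$, so it is not a finite set of exceptions one could dismiss. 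To close the gap one needs further degree-three consequences of the ideal: the paper's certificate uses, besides the elements $\{x_i,c_i\}$ that your substitution scheme effectively produces, the elements $[x_i,a_i]=[x_i,\{x_0,x_i\}-[x_j,x_k]]$, and these extra relations are precisely what is missing on your degenerate curve. The same defect propagates to your treatment of $\Omega_2=x_0^2+\b\c x_1^2-\c x_2^2+\b x_3^2$, which is in any case only asserted rather than carried out; note that once $\Omega_1$ is known to be central, $\Omega_2$ comes for free, since the automorphism $\psi_1$ of \Cref{autom} sends $\Omega_1$ to $-\Omega_2$ (this is how the paper argues, after first verifying a suitable square root and $i$ may be assumed to lie in $\Bbbk$).
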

\begin{proof}
Let's simplify the notation by omitting the $x$'s and just retaining the subscripts, so $kji$ denotes
$x_kx_jx_i$, $ii0$ denotes $x_ix_ix_0$, and so on. We also write $\{i,j\}$ for $\{x_i,x_j\}$,  $[i,j]$ for $[x_i,x_j]$, etc. 

For each cyclic permutation $(i,j,k)$ of $(1,2,3)$, we define
$$
c_i   \;  :=\;  [x_0,x_i]- \a_i \{x_j,x_k\}  \qquad \hbox{and} \qquad
 a_i  \;  := \; \{x_0,x_i\}-[x_j,x_k].
 $$
Straightforward computations in the free algebra $\Bbbk\langle x_0,x_1,x_2,x_3\rangle$ show that 
\begin{align*}
\{x_i,c_i\} & \; = \;  \{i, [0,i]\}- \a_i \{i, \{j,k\}\}  \notag
%\\
%& \; = \;  (i0i-ii0+0ii-i0i)- \a_i (ijk+ikj+jki+kji) 
\\
& \; =\;    [0,ii]- \a_i (ijk+kji)- \a_i (ikj+jki), \qquad \hbox{and}
\\
[x_i,a_i] & \;=\;  [i,\{0,i\}]-[i, [j,k]]   \notag
\\
%& \;=\;  (i0i+ii0-0ii-i0i)- (ijk-ikj-jki+kji)
%\\
& \;=\;    [ii,0]- (ijk+kji)+(ikj+jki).
\end{align*}
When $\a_1+\a_2+\a_3+\a_1\a_2\a_3 = 0$, error-prone calculations\footnote{In carrying out these calculations one should not attempt to ``simplify'' 
the expressions $ijk+kji$ and $ikj+jki$. }
show that  
\begin{align*}
&(1+\a_ 2\a_ 3)  \{ x_1,c_1\} \;\; + \; \;  \a_ 2 \a_ 3[x_1,a_1]  \; \; + \; \;  (1+\a_ 3)\{ x_2,c_2\}  \; +\; \a_3[x_2,a_2] 
\\
 &  \phantom{xxxxxxxxx1ixxxxxxxxxixxxxxxx}  \; \; + \; \; 
 (1-\a_ 2)\{ x_3,c_3\} \; - \;\a_2 [x_3,a_3] 
 \end{align*}
 equals  $[x_0,x_1^2+x_2^2+x_3^2]$.  
 Hence $ [x_0,-x_0^2+x_1^2+x_2^2+x_3^2] =0$ in $A(\a_1,\a_2.a_3)$. 
 
 A similar calculation shows that
  \begin{align*}
& (1+\a_2)\big(\{x_0,c_1\}  +\a_1[x_0,a_1] \big)     
  \; \; + \; \; (1-\a_1)[x_3,c_2] + (1+\a_1+2\a_1\a_2)\{x_3,a_2\} 
\\
 &  \phantom{xxxxxxxxxiixxxxxxxxxxxxxxxxxxxx} 
  \; \; - \; \;  (1+\a_1\a_2)\big([x_2,c_3] +\{x_2,a_3\} \big) 
 \\
&  \phantom{xxx} \; \;=\; \; (1+\a_1)(1+\a_2) [x_1,-x_0^2+x_1^2+x_2^2+x_3^2].
 \end{align*}
 Hence $ [x_1,-x_0^2+x_1^2+x_2^2+x_3^2]=0$. The transformation $x_0 \mapsto x_0$, $x_i \mapsto x_{i+1}$ for $i=1,2,3$, and $\a_i \mapsto \a_{i+1}$ for $i=1,2,3$, 
 leaves $-x_0^2+x_1^2+x_2^2+x_3^2$ fixed; it follows that $ [x_2,-x_0^2+x_1^2+x_2^2+x_3^2]=0$ and then that $ [x_3,-x_0^2+x_1^2+x_2^2+x_3^2]=0$.
 This completes the proof that $-x_0^2 +x_1^2+x_2^2+x_3^2$ belongs to the center of $A(\a_1,\a_2,\a_3)$ when 
 $\{\a_1,\a_2,\a_3\} \cap \{0 , \pm 1\}=\varnothing$ and  $\a_1+\a_2+\a_3+\a_1\a_2\a_3 = 0$. 
  
The automorphism $\psi_1$ in  \Cref{autom} sends $-x_0^2+x_1^2+x_2^2+x_3^2$ to $x_0^2+\a_2\a_3x_1^2-\a_3x_2^2+\a_2x_3^2$ so the latter also belongs
to the  center of $A(\a_1,\a_2,\a_3)$.  
\end{proof}

\begin{proposition}
\label{prop.moore}
Let $\Bbbk$ be any field.
If  $\a\b\c  \ne 0$ and $\a+\b+\c+\a\b\c  \ne 0$, then $x_0^2$,  $x_1^2$,  $x_2^2$, and  $x_3^2$, belong to the center of $A(\a,\b,\c)$.
\end{proposition}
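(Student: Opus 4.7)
The plan proceeds in three steps: a symmetry reduction, the production of two key commutator identities, and a linear-algebra argument whose determinant is exactly $\a+\b+\c+\a\b\c$.

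\textbf{Step 1 (Reduction).} By \Cref{prop.aut.A}, the automorphism $\psi_j$ sends $x_0$ to a non-zero scalar multiple of $x_j$, so $\psi_j(x_0^2)\in\Bbbk^\times\cdot x_j^2$; hence centrality of $x_0^2$ implies centrality of $x_j^2$ for every $j$. The cyclic isomorphism $A(\a,\b,\c)\cong A(\b,\c,\a)$ from \Cref{lem.isom} cyclically permutes $(x_1,x_2,x_3)$ and $(\a,\b,\c)$ and therefore cycles the three commutators $[x_0^2,x_j]$, so it suffices to prove the single identity $[x_0^2,x_1]=0$.

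\textbf{Step 2 (Key identities).} From $\{x_0,x_1\}=[x_2,x_3]$ one computes $[x_0^2,x_1]=[x_0,[x_2,x_3]]$ directly. Expanding the right-hand side via $[x_0,x_2]=\b\{x_3,x_1\}$, $[x_0,x_3]=\c\{x_1,x_2\}$, and the elementary identity $[\{x_j,x_k\},x_k]=[x_j,x_k^2]$ yields
\[
[x_0^2,x_1] \;=\; \b\,[x_1,x_3^2] \,-\, \c\,[x_1,x_2^2],
\]
with cyclic analogues for $[x_0^2,x_2]$ and $[x_0^2,x_3]$. To handle the resulting commutators, write $[x_1^2,x_2]=\{x_1,[x_1,x_2]\}=\{x_1,\{x_0,x_3\}\}$ using $[x_1,x_2]=\{x_0,x_3\}$ and expand the outer anticommutator in two different ways. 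Applying the two Leibniz-like rules
\[
\{A,BC\} \;=\; \{A,B\}C \,-\, B\,[A,C] \qquad\text{and}\qquad \{A,BC\} \;=\; [A,B]C \,+\, B\,\{A,C\}
\]
to $\{x_1,x_0x_3\}$ and $\{x_1,x_3x_0\}$ in complementary pairings, substituting the six relations (including the derived $[x_1,x_3]=-\{x_0,x_2\}$ and $\{x_1,x_3\}=\b^{-1}[x_0,x_2]$), and collapsing the trilinear monomials via $[\{A,B\},B]=[A,B^2]$ and $[A,\{A,B\}]=[A^2,B]$ produces the two clean formulas
\[
[x_1^2,x_2] \;=\; [x_2,x_3^2]+[x_0^2,x_2] \qquad\text{and}\qquad [x_1^2,x_2] \;=\; -\a\,[x_2,x_3^2]+\b^{-1}[x_0^2,x_2].
\]

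\textbf{Step 3 (Linear system and conclusion).} Substituting the cyclic analogue of the first identity in Step 2 (which expresses $[x_0^2,x_2]$ as $\c[x_2,x_1^2]-\a[x_2,x_3^2]$) into each of the two formulas above produces a homogeneous linear system in the unknowns $[x_2,x_3^2]$ and $[x_2,x_1^2]$:
\[
(\a-1)\,[x_2,x_3^2] \;=\; (1+\c)\,[x_2,x_1^2] \qquad\text{and}\qquad \a(1+\b)\,[x_2,x_3^2] \;=\; (\b+\c)\,[x_2,x_1^2].
\]
A direct expansion shows that the determinant of this $2\times 2$ system equals $\a+\b+\c+\a\b\c$, which is non-zero by hypothesis; hence $[x_2,x_1^2]=[x_2,x_3^2]=0$. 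Cycling through \Cref{lem.isom} propagates this vanishing to every $[x_j,x_k^2]$ with $j,k\in\{1,2,3\}$ and $j\ne k$, so the formula of Step 2 gives $[x_0^2,x_1]=0$, completing the reduction of Step 1.

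\textbf{Main obstacle.} The computational heart of the argument is orchestrating the Leibniz-like rules so that the many trilinear monomials in $\{x_1,\{x_0,x_3\}\}$ cancel and only commutators of squares survive: only one complementary pairing of expansions produces the two clean formulas above. The appearance of $\a+\b+\c+\a\b\c$ in the determinant is precisely what distinguishes this proposition from \Cref{prop.Skly.center}: in the Sklyanin case that sum vanishes, the $2\times 2$ system becomes degenerate, and the individual $x_j^2$ are no longer central---only specific linear combinations of them are.
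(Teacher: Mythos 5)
Your argument is correct --- I verified the key identities: $[x_0^2,x_1]=[x_0,\{x_0,x_1\}]=[x_0,[x_2,x_3]]=\b[x_1,x_3^2]-\c[x_1,x_2^2]$, the two complementary expansions of $\{x_1,\{x_0,x_3\}\}$ giving $[x_1^2,x_2]=[x_2,x_3^2]+[x_0^2,x_2]$ and $[x_1^2,x_2]=-\a[x_2,x_3^2]+\b^{-1}[x_0^2,x_2]$, and the determinant computation $-(\a-1)(\b+\c)+\a(1+\b)(1+\c)=\a+\b+\c+\a\b\c$. This is a genuinely different route from the paper's. The paper works directly in the free algebra: it exhibits, for each generator, an explicit linear combination of the elements $\{x_i,c_i\}$, $[x_i,a_i]$ (products of generators with the defining relations) that collapses to $(\a+\b+\c+\a\b\c)[x_\ell,x_m^2]$, and then divides by the nonvanishing scalar; the identities are stated without derivation and are, as the paper itself admits, ``error-prone'' to check. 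Your version reorganizes the same underlying cancellation into a handful of transparent operator identities ($[A^2,B]=\{A,[A,B]\}=[A,\{A,B\}]$, the two Leibniz rules for $\{A,BC\}$) feeding a $2\times 2$ homogeneous system whose determinant is exactly $\a+\b+\c+\a\b\c$; this is easier to verify, makes the role of the non-degeneracy hypothesis conceptually visible, and explains cleanly why the conclusion fails in the Sklyanin case. The trade-off is that the paper's computation establishes all twelve commutators $[x_\ell,x_m^2]$ directly and uniformly, whereas you only compute commutators of $x_1^2,x_2^2,x_3^2$ against $x_1,x_2,x_3$ and of $x_0^2$ against everything, and must import the automorphisms $\psi_j$ to get $[x_0,x_j^2]=0$.

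That last point is the one genuine (though easily repaired) gap: the proposition is stated for an arbitrary field, but \Cref{prop.aut.A} requires $\Bbbk$ to contain square roots $a,b,c$ of $\a,\b,\c$ and a square root of $-1$. You should either add a sentence extending scalars to a field $K\supseteq\Bbbk$ containing these roots and noting that $A\hookrightarrow A\otimes_\Bbbk K$ is injective (so a commutator that vanishes after base change already vanishes in $A$), or replace the appeal to $\psi_j$ by one more direct computation of $[x_0,x_1^2]$ in the style of your Step~2.
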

\begin{proof}
We use the same notation as that in the proof of \Cref{prop.Skly.center}. Calculations show that if $(i,j,k)$ is a cyclic permutation of $(1,2,3)$, then 
\begin{align*}
&(\a_ j +\a_ k)  \{ x_i,c_i\} \;\; - \; \;  \a_ i (\a_ j \a_ k+1)[x_i,a_i]  \; \; - \; \;  \a_ i (\a_ k+1)  \big(\{ x_j,c_j\}  +[x_j,a_j] \big)
\\
 &  \phantom{xxxxxxxxxiixxxxxxxxxxxxxxxxxxxxx}  \; \; + \; \; 
 \a_ i (\a_ j-1)\big( \{ x_k,c_k\}  + [x_k,a_k] \big) 
 \\
&  \phantom{xxx} \; \;=\; \; (\a_ 1 + \a_ 2 + \a_ 3 + \a_ 1\a_ 2\a_ 3)[x_0,x_i^2], 
 \end{align*}
and
 \begin{align*}
&(\a_ j +\a_ k)  \{ x_j,a_i\} \;\; + \; \;   (\a_ j \a_ k+1)[x_j,c_i]  \; \; - \; \; (\a_k+1)\big(  \a_j\{x_i,a_j\} + [x_i,c_j] \big) 
\\
 &  \phantom{xxxxxxxxxiixxxxxxxxxxxxxxxxxxxx}  \; \; + \; \; 
(\a_j-1) \big( [x_0,a_k]  -\{x_0,c_k\}  \big) 
 \\
&  \phantom{xxx} \; \;=\; \; (\a_ 1 + \a_ 2 + \a_ 3 + \a_ 1\a_ 2\a_ 3)[x_k,x_j^2],
 \end{align*}
% and
%  \begin{align*}
%&( \alpha_k+\alpha_i)\{x_k,a_j\}   \;\; + \; \;    (\alpha_k\alpha_i+1)[x_k,c_j]     
%  \; \; - \; \;  (\alpha_i+1)\big(\alpha_k \{x_j,a_k\}-[x_j,c_k] \big)  
%\\
% &  \phantom{xxxxxxxxxiixxxxxxxxxxxxxxxxxxxx}  \; \; - \; \; 
% (\alpha_k-1)\big(\{x_0,c_i\}-[x_0,a_i]  \big) 
% \\
%&  \phantom{xxx} \; \;=\; \; (a_i+\a_j+\a_k+\a_i\a_j\a_k)[x_i,x_k^2].
% \end{align*}
  and
  \begin{align*}
&( \alpha_i+\alpha_k)\{x_i,a_j\}   \;\; - \; \;    \alpha_i(\alpha_j\alpha_k+1)[x_i,c_j]   
  \; \; + \; \;  (\alpha_i+1)\big(\{x_0,c_k\}-[x_0,a_k] \big)  
\\
 &  \phantom{xxxxxxxxxiixxxxxxxxxxxxxxxxxxxx} 
  \; \; - \; \;   (\alpha_k-1)\big(\a_i\{x_j,a_i\}-[x_j,c_i]  \big) 
 \\
&  \phantom{xxx} \; \;=\; \; - \, (\a_ 1 + \a_ 2 + \a_ 3 + \a_ 1\a_ 2\a_ 3)[x_k,x_i^2]
 \end{align*}
 and
  \begin{align*}
& -(\alpha_j+\alpha_k)\{x_0,c_i\}       \;\; - \; \;    \a_i(\a_j\a_k+1)[x_0,a_i]     
  \; \; + \; \; \a_i (\alpha_k+1)\big(\a_j\{x_k,a_j\}-[x_k,c_j] \big)  
\\
 &  \phantom{xxxxxxxxxiixxxxxxxxxxxxxxxxxxxx} 
  \; \; + \; \;   \a_i(\alpha_j-1)\big(\a_k\{x_j,a_k\}+[x_j,c_k]  \big) 
 \\
&  \phantom{xxx} \; \;=\; \; - \,(\a_ 1 + \a_ 2 + \a_ 3 + \a_ 1\a_ 2\a_ 3)[x_i,x_0^2].
 \end{align*}

%These equalities hold when we specialize $\a_1$, $\a_2$, and $\a_3$, to be elements in $\Bbbk$. 
Since the images of $a_i$ and $c_i$ in $A(\a_1,\a_2,\a_3)$ are zero, if $\a_1+\a_2+\a_3+\a_1\a_2\a_3 \ne 0$, then 
$[x_0,x_1^2]=[x_0,x_2^2]=[x_0,x_3^2]=0$ and $[x_1,x_3^2]=[x_2,x_1^2]=[x_3,x_2^2]=0$ 
and $[x_1,x_2^2]=[x_2,x_3^2]=[x_3,x_1^2]=0$ in $A(\a_1,\a_2,\a_3)$. 
%The hypothesis that $\a_1\a_2\a_3 \ne 0$ allows us to use the automorphisms in 
% \Cref{prop.aut.A} to deduce that $[x_i,x_j^2]=0$ for all  $i,j \in \{0,1,2,3\}$. 
\end{proof}

\subsection{Degree-two central elements in $R(a,b,c,d)$}

It is conjectured at \cite[p. 47]{CHL} that the elements
$$
C_1:=ax_1x_3+bx_2x_4+cx_2^2+dx_1^2
$$
and 
\begin{equation*}
  C_2:=a'(v)(x_1x_3+x_3x_1)+b'(v)(x_2x_4+x_4x_2)+c'(v)(x_2^2+x_4^2)+d'(v)(x_1^2+x_3^2)
\end{equation*}
generate the center of $R(a,b,c,d)$ (we have suppressed an irrelevant scaling constant from the original expression of $C_2$ in \cite{CHL}). 
In order to have a little more symmetry, and to emphasize the parallels with the Sklyanin algebras, we will replace $C_1$ by 
\begin{equation*}
  Z_1:=a(x_1x_3+x_3x_1)+b(x_2x_4+x_4x_2)+c(x_2^2+x_4^2)+d(x_1^2+x_3^2), 
\end{equation*}
which is equal to $2C_1$, and replace $C_2$ by the element $Z_2$ in \Cref{cor.center} below, and show that $Z_1$ and $Z_2$ belong to the center of $R(a,b,c,d)$. 

%Here, we make some progress towards elucidating the cited conjecture.  

\begin{proposition}\label{pr.O1}
  For all $a,b,c,d \in \Bbbk$, the element $Z_1$ is central in $R(a,b,c,d)$. 
\end{proposition}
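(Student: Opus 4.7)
My plan is to prove $[Z_1,x_k]=0$ for $k=1,\dots,4$ by direct computation, reducing the work via two devices. First, a quick inspection shows that the assignment $x_1\leftrightarrow x_3$, $x_2\leftrightarrow x_4$ sends R1 $\leftrightarrow$ R3, R2 $\leftrightarrow$ R4, and sends each of R5, R6 to its own negative, hence extends to a graded algebra automorphism of $R(a,b,c,d)$. Since $Z_1$ is manifestly invariant under this involution, it suffices to verify centrality against $x_1$ and $x_2$ only.

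Second, I would use R5 and R6 to replace the symmetrized products $a\{x_1,x_3\}$ and $b\{x_2,x_4\}$ in $Z_1$ by ordered ones. Since R5 yields $a\{x_1,x_3\} = 2ax_1x_3+c(x_2^2-x_4^2) = 2ax_3x_1-c(x_2^2-x_4^2)$, and R6 resolves $b\{x_2,x_4\}$ analogously, one obtains the four equivalent presentations
\begin{align*}
Z_1 &= 2(ax_1x_3 + bx_2x_4 + cx_2^2 + dx_1^2) = 2(ax_1x_3 + bx_4x_2 + cx_2^2 + dx_3^2) \\
&= 2(ax_3x_1 + bx_2x_4 + cx_4^2 + dx_1^2) = 2(ax_3x_1 + bx_4x_2 + cx_4^2 + dx_3^2)
\end{align*}
inside $R(a,b,c,d)$; call these (A), (B), (C), (D). The first of them recovers $2C_1$ from \cite{CHL}.

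The main calculation then uses \emph{different} presentations of $Z_1$ on the two sides of each commutator, chosen so that the mixed quadratic submonomials cancel immediately. For $[Z_1,x_1]$, I would use (A) to expand $Z_1x_1$ and (D) to expand $x_1Z_1$: the $ax_1x_3x_1$ terms cancel outright, and R6 converts the residual $d$-contribution $d(x_1^3-x_1x_3^2)=x_1\cdot d(x_1^2-x_3^2)$ into a $b$-contribution that combines with the surviving $b$-term to leave only $b(x_2x_4x_1-x_1x_2x_4)+c(x_2^2x_1-x_1x_4^2)$; substituting $bx_4x_1$ from R4 into $bx_2(x_4x_1)$ and $cx_1x_4$ from R3 into $(cx_1x_4)x_4$ then cancels the remaining cubics in pairs. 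The parallel calculation for $[Z_1,x_2]$ again uses (A) and (D), with R5 rewriting $c(x_2^3-x_2x_4^2)=x_2\cdot c(x_2^2-x_4^2)$ as a commutator in $x_1,x_3$, and with R2, R3 used to process the surviving $d$-term $d(x_1^2x_2-x_2x_3^2)$. The only real obstacle is organizational bookkeeping of cubic monomials; the key observation that makes the proof manageable is the multiplicity of presentations (A)--(D), which allows one to tailor each side of each commutator so that the problem reduces to comparing just a handful of cubics via the trilinear relations R1--R4.
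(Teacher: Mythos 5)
Your argument is correct: the involution $x_1\leftrightarrow x_3$, $x_2\leftrightarrow x_4$ does permute (R1)--(R4) and negate (R5), (R6), so it is an automorphism fixing $Z_1$; the four presentations (A)--(D) follow from (R5) and (R6) exactly as you say; and I have checked that your two commutator computations close up (for $[Z_1,x_1]$ the $b$- and $c$-cubics cancel in pairs after substituting from (R4) and (R3), and for $[Z_1,x_2]$ the $d$-term processed via (R2) and (R3) collapses to $-a(x_1x_3x_2-x_2x_1x_3)$, killing the surviving $a$-term). This is a genuinely different route from the paper's. The paper first passes to the basis $z_0,\dots,z_3$ of \Cref{prop.R.new.relns}, in which $Z_1$ becomes a linear combination of the squares $z_i^2$ and the relations take the Sklyanin-like form $\kappa_i[z_0,z_i]=\mu_i\{z_j,z_k\}$, $\l_i\{z_0,z_i\}=\nu_i[z_j,z_k]$; it then exhibits $[z_0,Z_1]$ explicitly as an element of the ideal generated by the relations and uses two relabelings of the $z_i$ and of $a,b,c,d$ (preserving both $Z_1$ and the relation space) to deduce the remaining three commutators. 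Your proof stays in the original generators and instead exploits the freedom, afforded by (R5) and (R6), to reorder the quadratic monomials of $Z_1$ differently on the two sides of each commutator; combined with your single involution this is arguably more elementary and self-contained. What the paper's change of basis buys is reuse: the $z$-coordinates and the Sklyanin-shaped relations are what make \Cref{prop.autom^4=1} and hence the second central element $Z_2$ of \Cref{cor.center} accessible (there $Z_2$ is produced by applying an order-four automorphism, written in the $z$-basis, to $\tfrac12 Z_1$), so if you wanted to continue to $Z_2$ you would likely end up re-deriving that framework anyway.
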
 
\begin{proof}
%[Proof of \Cref{pr.O1}]
In terms of the generators $z_i$ in \Cref{prop.R.new.relns},
\begin{equation}
  \label{eq:O1}
Z_1 \; = \;2(b+c)z_0^2+2(a+d)z_1^2+2(-a+d)z_2^2+2(-b+c)z_3^2.  
\end{equation}
Using the expression for $Z_1$ in
%By \Cref{le.O1}, it suffices to show that $[Z_1,z_0]=0$. Using the expansion 
\Cref{eq:O1}, we get 
  \begin{equation}
    \label{eq:O1_comm}
    [z_0,Z_1] \; = \; 2(a+d)[z_0,z_1^2] + 2(-a+d)[z_0,z_2^2] +2(-b+c)[z_0,z_3^2]. 
  \end{equation}
We now label the relations in the statement of \Cref{prop.R.new.relns} (in the form LHS $-$ RHS) according to which commutator or anticommutator involving $z_0$ they contain. For example, the first and third relations in \Cref{prop.R.new.relns}  are
$$
  c_{1}\;=\; (a-b-c+d)[z_0,z_1]-(-a-b+c+d)\{z_2,z_3\} \; = \; 0
  $$
  and
  $$
  a_{2}\;=\; (a+b+c-d)\{z_0,z_2\}-(-a+b-c-d)[z_3,z_1] \;= \; 0.
$$
With this in place, we leave the reader to check that \Cref{eq:O1_comm} equals 
\begin{equation*}
  \{z_1,c_{1}\}-[z_1,a_{1}]+\{z_2,c_{2}\}+[z_2,a_{2}]-2\{z_3,c_{3}\}-2[z_3,a_{3}],
\end{equation*}
which obviously belongs to the ideal generated by the relations $c_{i}$ and $a_{i}$. Thus $    [z_0,Z_1]=0$. 

We now prove that $[Z_1,z_i]=0$ for $i=1,2,3$ by changing the labels of the $z_i$ and the structure constants $a$, $b$, etc. so that both
 $Z_1$ and the space of relations in \Cref{prop.R.new.relns} are preserved. 
The transformation
\begin{equation*}
  z_0\longleftrightarrow z_1,\quad z_2\longleftrightarrow z_3,\quad a\longleftrightarrow b,\quad c\longleftrightarrow d
\end{equation*}
is such a relabeling so the fact that $[Z_1,z_0]=0$ implies $[Z_1,z_1]=0$. 
The transformation
\begin{equation*}
  z_0\longleftrightarrow z_3,\quad z_1\longleftrightarrow z_2,\quad a\longleftrightarrow -a,\quad b\longleftrightarrow -b  
\end{equation*}
(while $c$ and $d$ are fixed) is another such transformation, so  the fact that $[Z_1,z_0]=0$ implies $[Z_1,z_3]=0$. 
Finally, composing the two transformations will prove that $[Z_1,z_2]=0$. 
\end{proof}

\begin{proposition}
\label{prop.autom^4=1}
Let 
 $$
\rho_2   \;=\;  \frac{(a+b-c+d)(-a+b-c-d)}{(-a+b+c+d) (a+b+c-d)}
  \qquad \hbox{and} \qquad    \rho_3 \; = \;   \frac{da}{bc}.
$$
Assume that the denominators in the  expressions for $\rho_2$ and $\rho_3$ are non-zero. 
Fix $q_2$ and $q_3$ such that $q_2^4=\rho_2$ and $q_3^4=\rho_3$, and define 
$$
\tau_0:=-q_2q_3, \qquad \tau_1:=1/q_2q_3, \qquad \tau_2:= q_2/q_3, \qquad \tau_3:=q_3/q_2.
$$
The linear map $\psi:R_1 \to R_1$ given by the formula  
$$
\psi(z_0)=\tau_0 z_1, \quad \psi(z_1)=\tau_1 z_0, \quad \psi(z_2)=\tau_2 z_3, \quad \psi(z_3)=\tau_3 z_2,
$$ 
extends to an algebra automorphism of $R(a,b,c,d)$.
 \end{proposition}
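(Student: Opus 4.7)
The plan is to verify that $\psi$, regarded as a linear automorphism of $V$, preserves the six-dimensional relation space $R \subseteq V^{\otimes 2}$ spanned by the relations $r_1,\ldots,r_6$ of \Cref{prop.R.new.relns}. Since all $\tau_i$ are nonzero, $\psi$ is invertible on $V$; once $\psi(R)\subseteq R$ is established, $\psi$ will descend to a graded algebra automorphism of $R(a,b,c,d)=TV/(R)$.

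The key simplifying observation is that $\psi$ exchanges $z_0 \leftrightarrow z_1$ and $z_2 \leftrightarrow z_3$ (each up to a scalar). Consequently the twelve quadratic expressions $[z_i,z_j]$ and $\{z_i,z_j\}$ split into three four-element blocks that $\psi$ preserves, namely
\begin{equation*}
B_1=\{[z_0,z_1],\{z_0,z_1\},[z_2,z_3],\{z_2,z_3\}\},\quad B_2=\{[z_0,z_2],\{z_0,z_2\},[z_3,z_1],\{z_3,z_1\}\},
\end{equation*}
\begin{equation*}
B_3=\{[z_0,z_3],\{z_0,z_3\},[z_1,z_2],\{z_1,z_2\}\}.
\end{equation*}
The relations in \Cref{prop.R.new.relns} were arranged precisely so that $r_1,r_3\in\mathrm{span}(B_1)$, $r_2,r_4\in\mathrm{span}(B_2)$, and $r_5,r_6\in\mathrm{span}(B_3)$. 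Hence it suffices to check, for each $k$, that $\psi$ preserves the two-dimensional subspace of relations lying in $\mathrm{span}(B_k)$.

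For $B_1$, the identities $\tau_0\tau_1=-1$ and $\tau_2\tau_3=1$ imply directly that $\psi(r_1)=r_1$ and $\psi(r_3)=-r_3$, so these relations are preserved individually. For $B_2$ and $B_3$, $\psi$ swaps the monomial content of each pair: the image $\psi(r_2)$ involves only the monomials appearing in $r_4$, so one must have $\psi(r_2)=\lambda r_4$ (and symmetrically $\psi(r_4)=\lambda' r_2$); likewise for $r_5,r_6$. Matching the coefficients of the two basis monomials in each such equation produces two constraints on the $\tau_i$ and the structure constants $a,b,c,d$; these two constraints coincide exactly when $q_2^4$ equals the cross-ratio of coefficients appearing in $r_2,r_4$, which is $\rho_2$, and analogously when $q_3^4=\rho_3$ for the $B_3$ block.

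The main obstacle is the bookkeeping: one must carry out the coefficient matching cleanly enough to see the expression
\begin{equation*}
\frac{(a+b-c+d)(-a+b-c-d)}{(-a+b+c+d)(a+b+c-d)}
\end{equation*}
emerge as the needed ratio, and similarly $da/(bc)$ for the $B_3$ block. The specific signs chosen in $\tau_0=-q_2q_3,\ \tau_1=1/(q_2q_3),\ \tau_2=q_2/q_3,\ \tau_3=q_3/q_2$ are calibrated precisely so that (i) $\tau_0\tau_1=-1,\ \tau_2\tau_3=1$ give the $B_1$ identities above, and (ii) $\tau_0\tau_2=-q_2^2$, $\tau_1\tau_3=q_2^{-2}$, $\tau_0\tau_3=-q_3^2$, $\tau_1\tau_2=q_3^{-2}$ match into the $B_2,B_3$ consistency equations via $q_2^4=\rho_2$ and $q_3^4=\rho_3$. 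Beyond this computation there is no conceptual difficulty.
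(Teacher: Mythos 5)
Your strategy coincides with the paper's: regard $\psi$ as a linear automorphism of $V$ and check that it preserves the relation space of \Cref{prop.R.new.relns} block by block, using the fact that $\psi$ permutes the twelve (anti)commutators within the three four-element blocks $B_1,B_2,B_3$. The reduction to three independent two-dimensional checks is correct, and your treatment of $B_1$ (via $\tau_0\tau_1=-1$ and $\tau_2\tau_3=1$, giving $\psi(r_1)=r_1$ and $\psi(r_3)=-r_3$) checks out. The difficulty is that for $B_2$ and $B_3$ you stop exactly where the content of the proposition lies: the claim that the coefficient-matching collapses to $q_2^4=\rho_2$ and, ``analogously,'' to $q_3^4=\rho_3$. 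That matching \emph{is} the proof, and the word ``analogously'' conceals a sign asymmetry between the two blocks that your write-up never confronts.

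Concretely: in the $B_2$ block the relations are written in terms of $[z_3,z_1]$ and $\{z_3,z_1\}$, so $\psi([z_0,z_2])=\tau_0\tau_2[z_1,z_3]=-\tau_0\tau_2[z_3,z_1]$ acquires an extra minus sign, and the consistency condition comes out as $\tau_0\tau_2/(\tau_1\tau_3)=-\rho_2$; since $\tau_0\tau_2/(\tau_1\tau_3)=-q_2^4$, this is indeed equivalent to $q_2^4=\rho_2$, as you say. In the $B_3$ block no such sign appears: $\psi([z_0,z_3])=\tau_0\tau_3[z_1,z_2]$ and $\psi(\{z_1,z_2\})=\tau_1\tau_2\{z_0,z_3\}$ land directly on the monomials of the relation $c\{z_0,z_3\}=a[z_1,z_2]$, and requiring
$\psi\bigl(b[z_0,z_3]-d\{z_1,z_2\}\bigr)=b\,\tau_0\tau_3[z_1,z_2]-d\,\tau_1\tau_2\{z_0,z_3\}$
to be proportional to $c\{z_0,z_3\}-a[z_1,z_2]$ forces $\tau_0\tau_3/(\tau_1\tau_2)=+da/(bc)=+\rho_3$. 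But $\tau_0\tau_3=-q_3^2$ and $\tau_1\tau_2=q_3^{-2}$, so $\tau_0\tau_3/(\tau_1\tau_2)=-q_3^4$, and the condition is $q_3^4=-\rho_3$, not $q_3^4=\rho_3$ as you assert. (The paper's own proof states $\tau_0\tau_3/(\tau_1\tau_2)=\rho_3$ at the corresponding point, which is likewise inconsistent with the stated $\tau_i$, so the sign slip appears to originate in the statement itself; but that only reinforces the point that the ``bookkeeping'' you defer is precisely where the argument succeeds or fails, and a proof must actually carry it out and record the resulting condition on $q_3$.)
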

 \begin{proof} 
 By \Cref{prop.R.new.relns}, $R$ is $\Bbbk\langle z_0,z_1,z_2,z_3\rangle$ modulo the relations 
$$
c_i   \;=\; \kappa_i[z_0,z_i]-\mu_i\{z_j,z_k\}  \qquad \hbox{and} \qquad
 a_i  \; = \; \l_i\{z_0,z_i\}-\nu_i[z_j,z_k],
 $$
  where $(i,j,k)$ runs over the cyclic permutation of $(1,2,3)$ and
  \begin{align*}
 \kappa_1&=a-b-c+d,  \quad \phantom{ii} \mu_1=-a-b+c+d, \quad \l_1=a+b+c+d, \quad \nu_1=a-b+c-d,   \\
  \kappa_2&= -a+b+c+d,    \quad \mu_2=a+b-c+d, \quad  \phantom{ii} \l_2=a+b+c-d, \quad \nu_2=-a+b-c-d, \\
   \kappa_3 &= b, \phantom{xxxxiixxxxx} \quad \mu_3= d, \phantom{xiixiiiiiixxix} \quad \l_3= c, \phantom{xxxiixxixx} \quad \nu_3= a.
\end{align*}
%Thus
%$$
%\rho_2= \frac{\mu_2\nu_2}{\kappa_2\l_2} %=\frac{(a+b-c+d)(-a+b-c-d)}{(-a+b+c+d) (a+b+c-d)}
%  \qquad \hbox{and} \qquad    \rho_3 \; = \;  \frac{\mu_3\nu_3}{\kappa_3\l_3}. %=\frac{da}{bc}
%$$ 
Furthermore, 
$$
\frac{ \tau_0\tau_1}{ \tau_2\tau_3}  \; = \; - 1,
\qquad
\frac{ \tau_0\tau_2}{ \tau_3\tau_1}   \; = \;  - \rho_2 \; = \; - \frac{\mu_2\nu_2}{\kappa_2\l_2} ,
\qquad
\frac{ \tau_0\tau_3}{  \tau_1\tau_2}   \; = \;  \rho_3  \; = \;  \frac{\mu_3\nu_3}{\kappa_3\l_3}. 
$$

Since
\begin{align*}
\psi(c_1) & \; = \; \kappa_1 \tau_0\tau_1[z_1,z_0]- \mu_1 \tau_2\tau_3\{z_3,z_2\}, 
\qquad
\psi(a_1)  \; = \; \l_1 \tau_0\tau_1\{z_1,z_0\}- \nu_1 \tau_2\tau_3[z_3,z_2],
\\
\psi(c_2) & \; = \; \kappa_2 \tau_0\tau_2[z_1,z_3]- \mu_2 \tau_3\tau_1\{z_2,z_0\} ,
\qquad
\psi(a_2)  \; = \; \l_2 \tau_0\tau_2\{z_1,z_3\}- \nu_2 \tau_3\tau_1[z_2,z_0],
\\
\psi(c_3) & \; = \; \kappa_3 \tau_0\tau_3[z_1,z_2]- \mu_3 \tau_1\tau_2\{z_0,z_3\},
\qquad
\psi(a_3)  \; = \; \l_3 \tau_0\tau_3\{z_1,z_2\}- \nu_3 \tau_1\tau_2[z_0,z_3],
\end{align*} 
we have
\begin{align*}
 \tau_2^{-1}\tau_3^{-1}  \psi(c_1) & \; = \; -\kappa_1[z_1,z_0]- \mu_1\{z_3,z_2\}   \phantom{xx}   =\;  c_1, 
\\
 \tau_2^{-1}\tau_3^{-1} \psi(a_1) &  \; = \; - \l_1 \{z_1,z_0\}- \nu_1 [z_3,z_2] \phantom{iix}   = \; -a_1,
\\
 \tau_3^{-1}\tau_1^{-1}\psi(c_2) & \; = \; -\kappa_2 \rho_2[z_1,z_3]- \mu_2\{z_2,z_0\} \; = \; - \frac{\mu_2}{\l_2}a_2 ,
\\
 \tau_3^{-1}\tau_1^{-1} \psi(a_2) & \; = \; - \l_2  \rho_2\{z_1,z_3\}- \nu_2 [z_2,z_0]  \phantom{i}  = \; \frac{\nu_2}{\kappa_2}c_2,
\\
\tau_1^{-1}\tau_2^{-1} \psi(c_3) & \; = \; \kappa_3 \rho_3[z_1,z_2]- \mu_3 \{z_0,z_3\}   \phantom{ix}   =  \; - \frac{\mu_3}{\l_3}a_3,
\\
\tau_1^{-1}\tau_2^{-1} \psi(a_3) & \; = \; \l_3\rho_3\{z_1,z_2\}- \nu_3 [z_0,z_3]  \phantom{xx}  =   \; -   \frac{\nu_3}{\kappa_3}c_3.
\end{align*} 
Hence $\psi$ extends to an algebra automorphism, as claimed. 

Since $\psi^2(z_0)=\tau_0\tau_1 z_0 =- z_0$, $\psi^2 \ne \id_R$. Since  $(\tau_0\tau_1)^2 =(\tau_2\tau_3)^2=1$, $\psi^4 = \id_R$. 
\end{proof}
 
\begin{corollary}
\label{cor.center}
With the notation and hypotheses in \Cref{prop.autom^4=1}, The element
$$
Z_2 \; := \; (a+d)(q_2q_3)^{-2}z_0^2 + (b+c)(q_2q_3)^{2}z_1^2 + (c-b)(q_2/q_3)^{2}z_2^2 + (d-a)(q_3/q_2)^{2}z_3^2 
$$
belongs to the center of $R$.
\end{corollary}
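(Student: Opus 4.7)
The plan is to derive centrality of $Z_2$ as an immediate consequence of \Cref{pr.O1} together with the automorphism $\psi$ constructed in \Cref{prop.autom^4=1}. Since $\psi$ is a graded algebra automorphism, it preserves the center of $R(a,b,c,d)$; hence the image $\psi(Z_1)$ of the central element $Z_1$ of \Cref{pr.O1} is again central, and the whole argument reduces to identifying this image, up to a scalar, with $Z_2$.

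Concretely, I would start from the expression for $Z_1$ in terms of the $z_i$ generators given in \Cref{eq:O1}, namely
\[
\tfrac{1}{2}Z_1 \;=\; (b+c)z_0^2 + (a+d)z_1^2 + (d-a)z_2^2 + (c-b)z_3^2,
\]
and apply $\psi$ term by term. Using the formulas $\psi(z_0)=\tau_0 z_1$, $\psi(z_1)=\tau_1 z_0$, $\psi(z_2)=\tau_2 z_3$, $\psi(z_3)=\tau_3 z_2$ from \Cref{prop.autom^4=1}, one has $\psi(z_i^2)=\tau_i^2 z_{\sigma(i)}^2$ for the involution $\sigma = (0\,1)(2\,3)$. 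Substituting the squares $\tau_0^2=(q_2q_3)^2$, $\tau_1^2=(q_2q_3)^{-2}$, $\tau_2^2=(q_2/q_3)^2$, $\tau_3^2=(q_3/q_2)^2$ and reorganising the four terms according to $\sigma$ produces an expression whose coefficients on $z_0^2,z_1^2,z_2^2,z_3^2$ match those in the definition of $Z_2$ in \Cref{cor.center}. Thus $Z_2=\tfrac{1}{2}\psi(Z_1)$ (up to the obvious relabeling of monomials), and centrality is inherited from $Z_1$.

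There is essentially no obstacle beyond the short bookkeeping verification in the final step; the conceptual content has already been packaged into \Cref{pr.O1,prop.autom^4=1}, and the corollary is just the observation that a suitably chosen automorphism transports one central quadratic element to another. If one wishes, the verification can be organised more symmetrically by noting that because $\psi^2$ acts as multiplication by $(-1)^n$ on degree $n$, the group $\langle \psi\rangle$ acts on the four-dimensional space spanned by $z_0^2,z_1^2,z_2^2,z_3^2$ by signed permutations together with the diagonal rescaling by the $\tau_i^2$, and $Z_1$ and $Z_2$ simply lie in the same $\psi$-orbit.
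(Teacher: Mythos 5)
Your proposal is correct and is precisely the paper's own proof, which consists of the single observation that $Z_2=\psi\big(\tfrac{1}{2}Z_1\big)$ for the automorphism $\psi$ of \Cref{prop.autom^4=1}, so that centrality follows from \Cref{pr.O1}. (One small bookkeeping point: applying $\psi$ to $\tfrac12 Z_1$ actually sends $(d-a)z_2^2\mapsto(d-a)(q_2/q_3)^2z_3^2$ and $(c-b)z_3^2\mapsto(c-b)(q_3/q_2)^2z_2^2$, so the factors $(q_2/q_3)^2$ and $(q_3/q_2)^2$ come out swapped relative to the displayed formula for $Z_2$; this apparent mismatch is inherited from the paper's own statement and does not affect the validity of the argument.)
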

\begin{proof}
  Let $\psi$ be the automorphism in \Cref{prop.autom^4=1}.  Since
  $Z_2=\psi\big(\frac{1}{2}Z_1\big)$, the result follows from
  \Cref{pr.O1}.
 \end{proof}

\bibliography{biblio}
\bibliographystyle{plain}

\end{document}